\documentclass[12pt]{article}
\usepackage{graphicx}
\usepackage{amsmath}
\usepackage{amssymb}
\usepackage{theorem}
\usepackage[sans]{dsfont}        
\usepackage[backgroundcolor=blue!20!white, linecolor=blue!20!white,
textsize=footnotesize]{todonotes}
\usepackage[colorlinks]{hyperref}

\sloppy
\pagestyle{plain}

\numberwithin{equation}{section}

\textheight=8.5in
\textwidth=6.0in
\addtolength{\oddsidemargin}{-.25in}

\newtheorem{thm}{Theorem}[section]
\newtheorem{lemma}[thm]{Lemma}

\newtheorem{prop}[thm]{Proposition}

\newtheorem{cor}[thm]{Corollary}
{\theorembodyfont{\rmfamily}

\newtheorem{rmk}[thm]{Remark}
}

\newcommand{\qed}{\hfill \mbox{\raggedright \rule{.07in}{.1in}}}

\newenvironment{proof}{\vspace{1ex}\noindent{\bf
Proof}\hspace{0.5em}}{\hfill\qed\vspace{1ex}}
\newenvironment{pfof}[1]{\vspace{1ex}\noindent{\bf Proof of
#1}\hspace{0.5em}}{\hfill\qed\vspace{1ex}}

\newcommand{\R}{{\mathbb R}}
\newcommand{\C}{{\mathbb C}}
\newcommand{\Z}{{\mathbb Z}}
\newcommand{\D}{{\mathbb D}}

\newcommand{\BV}{\operatorname{BV}}

\newcommand\cB{{\mathcal B}}
\newcommand\cC{{\mathcal C}}
\newcommand\cD{{\mathcal D}}

\newcommand\cN{{\mathcal N}}

\newcommand\cW{{\mathcal W}}
\newcommand\cY{{\mathcal Y}}


\newcommand\bC{{\mathbb C}}
\newcommand\bD{{\mathbb D}}
\newcommand\bE{{\mathbb E}}
\newcommand\bH{{\mathbb H}}
\newcommand\bG{{\mathbb G}}
\newcommand\bN{{\mathbb N}}

\newcommand\bR{{\mathbb R}}

\newcommand\ve{\varepsilon}
\newcommand\eps{\epsilon}
\newcommand\vf{\varphi}

\newcommand\Id{{\mathds{1}}}

\newcommand{\ggen}{\varpi}

\title{Mixing for some non-uniformly hyperbolic systems}

\author{Carlangelo Liverani\footnote{
Dipartimento di Matematica,
Universit\`{a} di Roma (Tor Vergata),
Via della Ricerca Scientifica, 00133 Roma, Italy.
{\tt liverani@mat.uniroma2.it}}\ \ and Dalia Terhesiu\footnote{
Dipartimento di Matematica,
Universit\`{a} di Roma (Tor Vergata),
Via della Ricerca Scientifica, 00133 Roma, Italy.
{\tt daliaterhesiu@gmail.com}}}

\begin{document}

\maketitle

\begin{abstract}
In this work we present an abstract framework that allows to obtain mixing (and in some cases sharp mixing) rates for a reasonable large class of invertible systems preserving an infinite measure.
The examples explicitly considered are the invertible analogue of both Markov and non Markov unit interval maps. For these examples,
in addition to optimal results on mixing and rates of mixing in the infinite case, we obtain results on
the decay of correlation  in the finite case of invertible non Markov maps, which, to our knowledge, were not previously addressed. 

The proposed method consists of a  combination of the framework of operator renewal theory, as introduced in the context of
dynamical systems by Sarig~\cite{Sarig02}, with the framework of function spaces of distributions developed in the recent years along the lines of Blank, Keller and Liverani~\cite{BlankKellerLiverani01}.
\end{abstract}

\section{Introduction}

At present there exist well developed theories that provide subexponential decay of correlation for non-uniformly
expanding maps, culminating with the work of Sarig~\cite{Sarig02}.
For systems with subexponential decay of correlations, previous approaches to~\cite{Sarig02}
for estimating decay of correlations provided only upper bounds. These previous approaches
include
the coupling method of Young~\cite{Young99} (developed upon~\cite{Young98}),
Birkhoff cones techniques adapted to general Young towers by
Maume-Deschamps~\cite{Maume01a} and   the method
of stochastic perturbation developed by Liverani et al~\cite{LiveraniSaussolVaienti99}.
Optimal results on the correlation decay for the class of systems considered in~\cite{LiveraniSaussolVaienti99}
were proved later on by Hu~\cite{Hu04}.

Among other statistical properties,
the method of~\cite{Young99} provides polynomial decay of correlation for non uniformly expanding maps 
that can be modeled by Young towers with polynomially decaying return time tails. The estimates obtained in~\cite{Young99}
were shown to be optimal via the method of  {\em operator
renewal theory} introduced in~\cite{Sarig02} to obtain precise asymptotics and thus, sharp mixing rates.
The latter mentioned method  is an extension of scalar renewal theory from probability theory
to dynamical systems.  Later on, the method of operator renewal theory was substantially
extended and refined by Gou\"ezel~\cite{Gouezel04, Gouezel10}.

In recent work,  Melbourne and Terhesiu~\cite{MT} developed an operator renewal theory framework that recovers the classical notion of mixing
for a  very large class of (non-invertible) dynamical systems with infinite measure.
 We recall that the notion of  ``mixing" for infinite measure preserving systems is very delicate. In general, given a   conservative ergodic infinite
measure preserving transformation $(X,f,\mu)$ with transfer operator $L$, we have $\int L^n v\, d\mu\to 0$,
as $n\to\infty$, for all $v\in L^1(\mu)$. Thus, to recover the classical notion of mixing, one needs to find a sequence $c_n$ and 
a reasonably large class of functions $v$ (within $L^1$) such that $c_n\int L^n v\, d\mu\to C\int v\,d\mu$ for some $C>0$.

In short, the framework of operator renewal theory has been cast (at least implicitly) in a rather general Banach space setting (see, e.g.
\cite{Sarig02, Gouezel04, Gouezel10, MT, Gouezel11}) and has been successfully employed to study the
statistical properties of  both finite and infinite measure preserving, non invertible (eventually expanding) systems. Our aim in this work is to carry out the method of operator renewal theory, in the case of (finite and infinite measure preserving, but focusing on the later) invertible systems. In such a case one would need Banach spaces that allow a direct study of the spectral properties of the transfer operator eliminating altogether, at least in the case of uniformly hyperbolic first return map, the need of coding the system. While until recently it was unclear if such Banach spaces existed at all, the last decade, starting with Blank, Keller and Liverani \cite{BlankKellerLiverani01}, and reaching maturity with \cite{GL06, GL08, BT07, BT2, DemersLiverani08, BaladiGouezel10, DemersZhang11, Li, BuL, BaL, GLP, TsujiiFaure}, has produced an abundance of such spaces. Yet, all such Banach spaces are necessarily Banach spaces of distributions, hence the need to {\em explicitly} cast all the renewal theory arguments in a completely 
abstract form (for example, one must avoid implicit assumptions like the Banach space being a subset of some $L^p$).
 In this work we  provide a set of abstract conditions on dynamical systems (including the non invertible ones)  and develop a corresponding renewal theory framework; this set of hypotheses/conditions includes
the existence of Banach spaces with certain good properties. Moreover, we provide some examples to show that the above mentioned hypotheses are indeed checkable in non trivial cases. Let us explain the situation in more details.

\subsection{Operator renewal theory for invertible systems: the need for new functions spaces}

Given a   conservative (finite or infinite)  measure preserving transformation $(X,f,\mu)$,
renewal theory is an efficient tool for the study of the long term behavior of the transfer operator $L:L^1(X)\to L^1(X)$. 
Fix $Y\subset X$ with $\mu(Y)\in(0,\infty)$.
Let $\varphi:Y\to\Z^+$ be the first return time 
$\varphi(y)=\inf\{n\ge1:f^ny\in Y\}$ (finite almost everywhere by conservativity).  Let $L:L^1(X)\to L^1(X)$ denote the transfer operator 
 for $f$ and define
\begin{equation}\label{eq_TnRn}
T_n=1_YL^n1_Y,\enspace n\ge0, \qquad R_n=1_YL^n1_{\{\varphi=n\}},\enspace n\ge1.
\end{equation}
Thus $T_n$ corresponds to general returns to $Y$ and $R_n$ corresponds to first returns to $Y$. 
The relationship $T_n=\sum_{j=1}^n T_{n-j}R_j$
generalises the notion of scalar renewal sequences (see~\cite{ BGT, Feller} and references therein). The rough idea behind operator renewal theory
is that the asymptotic behavior of the sequence $T_n$ can be obtained via a good understanding of the sequence $R_n$. Apriori assumptions needed to deal with the sequence $R_n$ include  the uniform hyperbolicity of the first return map $F$, along with good spectral properties of the associated transfer operator.

In short, to carry out the method of operator renewal theory for invertible maps $f$,
we need to establish the required spectral results for the transfer operator associated with the uniformly hyperbolic first return map 
$F$. In particular, a spectral gap is needed. Since it is well known that the transfer operators for invertible systems do not have a spectral gap on any of the usual spaces (such as $L^p, W^{p,q}$ or $BV$), unconventional Banach spaces are necessary.

In Section \ref{sec-oprenseq} we will specify exactly which conditions are needed to develop our theory and in the  following sections we obtain several results under such conditions. In section \ref{subsect-classex} we provide examples for which the above conditions are satisfied (which we prove in sections \ref{sec-setup} and \ref{sec-nonMarkov}).

\subsection{Mixing for non--invertible  infinite measure preserving systems}
\label{sec-MISyst}

The techniques in~\cite{MT} are very different from the ones developed for the
framework of operator renewal sequences associated with finite measure~\cite{Sarig02, Gouezel04, Gouezel10}.

In the infinite measure setting a crucial ingredient for the asymptotics of renewal sequences is that $\mu(y\in Y:\varphi(y)>n)=\ell(n)n^{-\beta}$
where $\ell$ is slowly varying\footnote{We recall that a measurable
function $\ell:(0,\infty)\to(0,\infty)$ is slowly varying if $\lim_{x\to\infty}\ell(\lambda x)/\ell(x)=1$ for all $\lambda>0$.
Good examples of slowly varying functions are the asymptotically constant functions and the logarithm.} and
$\beta\in(0,1]$ (see Garsia and Lamperti~\cite{GL} and Erickson~\cite{Erickson} for the setting of scalar renewal sequences). 
Under suitable assumptions on the first return map $T^\varphi$,~\cite{MT} shows that for  a (``sufficiently regular") function $v$  supported on $Y$ 
and a constant  $d_0=\frac{1}{\pi}\sin\beta\pi=[\Gamma(\beta)\Gamma(1-\beta)]^{-1}$, the following hold: i) when $\beta\in(\frac12,1]$
then $\lim_{n\to\infty}\ell(n)n^{1-\beta}T_nv=d_0\int_Y v\,d\mu$, uniformly on $Y$; 
ii) if $\beta\in(0,\frac12]$ and $v\geq 0$ then $\liminf_{n\to\infty}\ell(n)n^{1-\beta}T_nv=d_0\int_Y v\,d\mu$,
pointwise on $Y$ and iii) if $\beta\in(0,\frac12)$
 then $T_nv=O(\ell(n)n^{-\beta})$ uniformly on $Y$.
As shown in~\cite{MT}, the above results on $T_n$ extend to similar results on $L^n$ associated with a large class of non-uniformly expanding systems preserving an infinite measure. 

The results for the case $\beta<1/2$ are  \emph{optimal}  under
the \emph{general assumption} $\mu(\varphi>n)=\ell(n)n^{-\beta}$ (see~\cite{GL}). Under the \emph{additional assumption}
$\mu(\varphi=n)=O(\ell(n)n^{-(\beta+1)})$,  Gou\"ezel~\cite{Gouezel11}
obtains first order asymptotic  for $L^n$ for \emph{all} $\beta\in(0,1)$.

A typical example considered for the study of mixing/mixing rates via renewal operator theory associated with, both,  finite
and infinite measure preserving systems  is the family of Pomeau-Manneville intermittency
maps~\cite{PomeauManneville80}.   To fix notation, we recall the version studied
by Liverani~{\em et al.}~\cite{LiveraniSaussolVaienti99}:
\begin{align} \label{eq-LSV}
f_0(x)=\begin{cases} x(1+2^\alpha x^\alpha), & 0\leq x\leq\frac12 \\ 2x-1, &\frac12<x\leq 1 .
\end{cases}
\end{align}
It is well known that  the statistical properties for $f_0$ can be studied by inducing on  a `good'
set $Y$ inside $(0,1]$, such as the standard set $Y=[1/2,1]$. In particular,  we recall that the inducing method can be used
to show that there exists  a unique (up to scaling) $\sigma$-finite, absolutely continuous invariant measure $\mu_0$:
finite if $\alpha< 1$ and infinite if  $\alpha\geq 1$ ; equivalently, writing  $\beta:=1/\alpha$, $\mu$ is finite if $\beta>1$
and infinite if $\beta\leq 1$.

Let $\varphi_0$ be the return time function to $Y$,  rescale the $f_0$ invariant measure $\mu$
 such that $\mu(Y)=1$ and set $Y_j=\{\varphi_0=j\}$.  We recall that 
$\mu(Y_j)\leq C j^{-(\beta+1)}$ and $|(f_0^j)'(y_j)|^{-1}\leq C j^{-(\beta+1)}$,
for all $y_j\in Y_j$ (see~\cite{LiveraniSaussolVaienti99}). Hence, $\mu(\varphi_0=n)=O(n^{-(\beta+1)})$ and the assumption in
 Gou\"ezel~\cite{Gouezel11} is satisfied, providing first order asymptotic  for $L^n$ for all $\beta\in(0,1)$.

Apart from the above  Markov example, the results in \cite{MT, Gouezel11} apply also to the class
of non-Markovian interval maps,
with indifferent fixed points studied in Zweim\"uller~\cite{Zweimuller98,Zweimuller00}.
For simplicity,  consider the following example that satisfies  the above mentioned additional assumption
in  Gou\"ezel~\cite{Gouezel11}. 

Define a map $f_{0}:[0,1]\to [0,1] $ that on $[0,\frac 12]$ agrees with the map defined by~\eqref{eq-LSV}. On  $(1/2, 1]$,
we assume that there exists a finite partition into open intervals $I_p$, $p\geq 1$ such that  $f_{0}$ is  $C^2$ and strictly monotone in each $I_p$ with
$|f_{0}'|>2$. Moreover, assume that  $f_{0} $ is  topologically
mixing.
Obviously, the  new (not necessarily Markov) map $f_0$ shares many of the properties of the map defined by~\eqref{eq-LSV}. In particular, 
there exists  a unique (up to scaling) $\sigma$-finite, absolutely continuous invariant measure $\mu$:
finite if $\alpha< 1$ and infinite if  $\alpha\geq 1$ ; equivalently, writing  $\beta:=1/\alpha$, $\mu$ is finite if $\beta>1$
and infinite if $\beta\leq 1$. Moreover, given that $Y=[1/2,1]$ , $\varphi_{0}$ is the return time function of
$f_{0}$ to $Y$ and  $Y_j=\{\varphi_0=j\}$, one can easily see that $|f_{0}'(y_j)|^{-1}\leq C j^{-(\beta+1)}$,
for all $y_j\in Y_j$ and that $\mu(\varphi_{0}=n)=O(n^{-(\beta+1)})$.

For more general classes of mixing (in the sense described above) of
non-invertible infinite measure preserving systems (including parabolic maps of the complex plane) we refer to~\cite{MT}. At present it is not entirely clear
how to deal with the infinite measure preserving setting of higher dimensional  non uniformly expanding maps  considered by Hu and Vaienti~\cite{HuVa09}.

\subsection{Mixing rates in the  non invertible case}
\label{sec-mix-rates}

For results on decay of correlation in the finite case of~\eqref{eq-LSV} we refer to~\cite{Sarig02, Gouezel04} and~\cite{Hu04}.
For the infinite case, the method developed in~\cite{MT} yields  mixing rates and \emph{higher} order asymptotics of $L^n$. 
The results in this work suggest that mixing rates in the infinite case can be regarded as the analogue of the decay of correlation in the finite case.

 As shown in~\cite{MT}, 
mixing rates in the infinite measure setting  of $f_0$ can be obtained by exploiting a good enough expansion of 
the tail behavior $\mu_0(\varphi_0>n)$, where $\varphi_0$ is the return time function to a `good'
set $Y$ inside $(0,1]$, such as the standard set $Y=[1/2,1]$. 

Exploiting a modest expansion of 
the tail behavior $\mu_0(\varphi_0>n)$ and good properties of the induced
map $F_0$,
~\cite{MT} shows that for any H{\"o}lder or bounded variation observable
$v:[0,1]\to \R$ with $v$ supported on some compact subset of $(0,1)$, we have
$L^nv=d_0 n^{\beta-1}\int vd\mu_0+O(n^{-(\beta-1/2)})$, uniformly on $Y$. As noted in~\cite{MT},
this rate is optimal for $\beta\geq 3/4$. Exploiting more properties of the return function $\varphi_0$ and of the induced
map $F_0$, improved mixing rates are obtained in~\cite{Terhesiu12}.  The higher order asymptotic of $L^n$ in~\cite{MT, Terhesiu12}
is obtained via the study of associated operator renewal sequences $T_n:\cB\to\cB$, where $\cB$ is the space of  H{\"o}lder or bounded variation
functions.

\subsection{Previous results on mixing/mixing rates for invertible systems}

Adapting Bowen's technique (see \cite{Bow}), Melbourne~\cite{Melbourne_inv} generalizes the results on
mixing  in~\cite{MT}
to infinite measure preserving systems of the form~\eqref{eq-2DLSV} described in section~\ref{subsect-classex}.
The method in~\cite{Melbourne_inv} covers the class of diffeomorphisms that can be modeled by Young
towers, where it is explicitly assumed the quotient of the first return map has 
a Gibbs Markov structure. The results on mixing in~\cite{Melbourne_inv} could, in principle,  be obtained from Theorem~\ref{cor-equiMTl}
and  Corollary~\ref{cor-equilG}. 
However, we limit ourselves to treating explicitly only two classes of examples: i) one covered in ~\cite{Melbourne_inv}
(the example~\eqref{eq-2DnM} described in Section~\ref{subsect-classex}); ii) one not covered in ~\cite{Melbourne_inv} (the example ~\eqref{eq-2DLSV} described in Section~\ref{subsect-classex}).

Under the additional
assumption of exponential contraction along the stable manifold,
~\cite{Melbourne_inv} generalizes the results on
mixing rates in~\cite{MT, Terhesiu12}. As mentioned in~\cite{Melbourne_inv}, without this
further assumption, the employed method does not provide satisfactory results on mixing
rates. On the contrary our Theorem~\ref{thm-mixingrates} below provides optimal mixing rates   for the infinite case
of ~\eqref{eq-2DLSV}, where such uniform contraction along the stable manifold is not required.

Results on (upper bounds for) the decay of correlation in the finite case of~\eqref{eq-2DLSV} can be found in~\cite[Appendix B]{MT12}.

To our knowledge there is no result in the literature that deals with mixing/mixing rates in either the finite or infinite case
for example~\eqref{eq-2DnM}.

\subsection{Main results and outline of the paper}

In Section~\ref{sec-oprenseq}, we describe an abstract framework for operator renewal sequences associated with
non-uniformly hyperbolic systems based on the abstract hypothesis (H1)--(H5), under which results on mixing and mixing rates
hold. Our result on mixing and mixing rates  in this abstract framework are stated and proved in Section~\ref{sec-FO-Tn} (see Theorem~\ref{lemma-FO-Tn-MT}) and Section~\ref{sec-HO-Tn} (see  Theorem~\ref{lemma-HOTn}). These two results establish first and higher order asymptotics of the operator
$T_n$  under the weak assumption (H4)(ii) on the operator $R_n$ (with $T_n,R_n$ as defined in~\eqref{eq_TnRn}). This sort of assumption has not been exploited in previous renewal theory frameworks.
Equally important, the above mentioned abstract results are obtained under a weak assumption (H2), which, we believe, can be verified for interesting hyperbolic transformation (see Remark \ref{rmk:whynot}).

Unfortunately, to state exactly   Theorem~\ref{lemma-FO-Tn-MT} and  Theorem~\ref{lemma-HOTn} requires to establish a bit of notations. Yet, they imply immediately the following easily stated results. Theorem \ref{cor-equiMTl} follows from Theorem \ref{lemma-FO-Tn-MT} , while Theorem \ref{cor-mixingrates} follows from Theorem \ref{lemma-HOTn}.

\begin{thm} \label{cor-equiMTl}
Let $M$ be a manifold and let  $f:M\to M$ be a non-singular transformation w.r.t Lesbeque (Riemmannian) measure $m$. Suppose that there exists 
there exists $Y\subset M$
such that the first return map $F=f^\varphi$  to $Y$ is uniformly hyperbolic (possibly with singularities).

Assume that $F$ satisfies  the functional analytic   assumptions  (H1),(H2) (with $\ggen\beta>\eps_0$), (H3), (H4)(ii) with $\beta \in (1/2,1)$ or (H4)(iii) with $\beta\in(0,1)$, and (H5) stated in Section~\ref{sec-oprenseq}.
For convenience we assume $m(Y)=1$.\footnote{This can always be achieved by rescaling the Riemannian metric}  Let $\mu$ be the invariant measure given by (H1)(iv) and $d_0=[\Gamma(1-\beta)\Gamma(\beta)]^{-1}$. If $v,w:M\to\R$ are $C^\alpha$ (with $\alpha$ as in (H1)(i)) observables supported on $Y$, then
\[
\lim_{n\to\infty}\ell(n)n^{1-\beta}\int_M  v\, w\circ f^n \, d\mu =d_0 \int_M v\, d\mu\int_M w\, d\mu.
\]
\end{thm}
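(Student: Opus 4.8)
The plan is to turn the infinite-measure correlation $\int_M v\,(w\circ f^n)\,d\mu$ into a statement about the operator renewal sequence $T_n=1_YL^n1_Y$ acting on the distribution space $\cB$, and then to quote the abstract first order asymptotics (Theorem~\ref{lemma-FO-Tn-MT}). Essentially all the analysis has been packaged into that theorem and into (H1)--(H5), so what remains here is a translation between the concrete and the abstract pictures.

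First I would reduce to $T_n$ by duality. Let $h$ be the density of $\mu$ with respect to $m$ supplied by (H1)(iv), and let $L$ be the transfer operator of $f$ with respect to $m$, normalised so that $\int (Lg)\,\psi\,dm=\int g\,(\psi\circ f)\,dm$. Since $v$ and $w$, hence also $vh$, are supported on $Y$,
\[
\int_M v\,(w\circ f^n)\,d\mu=\int_M L^n(vh)\,w\,dm=\int_Y\big(1_YL^n1_Y(vh)\big)\,w\,dm=\big\langle T_n(vh),\,1_Yw\big\rangle,
\]
where $\langle\cdot,\cdot\rangle$ is the pairing of $\cB$ against a test space that contains the $C^\alpha$ observables supported on $Y$. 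Here I would use the part of (H1) that guarantees $vh\in\cB$ and that a $C^\alpha$ observable induces a bounded functional on $\cB$; I would also record the (routine) compatibility statement that the operator $T_n$ of the abstract Section~\ref{sec-oprenseq} coincides on $\cB$ with the concrete operator $1_YL^n1_Y$.

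Next I would apply Theorem~\ref{lemma-FO-Tn-MT} with $g=vh$. Under (H1)--(H3), (H5), (H2) in the form $\ggen\beta>\eps_0$, and either (H4)(ii) with $\beta\in(1/2,1)$ or (H4)(iii) with $\beta\in(0,1)$, that theorem gives $\ell(n)\,n^{1-\beta}\,T_ng\to d_0\,\big(\int_Y g\,dm\big)\,h1_Y$ in $\cB$, where the functional $g\mapsto\int_Y g\,dm$ and the element $h1_Y$ are, respectively, the left eigenfunctional $m|_Y$ and the (normalised, using $m(Y)=1$) leading eigenvector of the first-return transfer operator $\sum_nR_n$, as identified through (H1)/(H5). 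Taking $g=vh$ and using $\int_Y vh\,dm=\int_M v\,d\mu$, this reads $\ell(n)n^{1-\beta}T_n(vh)\to d_0\big(\int_M v\,d\mu\big)\,h1_Y$ in $\cB$. Finally, since $1_Yw$ is a bounded functional on $\cB$, pairing the last limit against it and invoking the first display gives
\[
\lim_{n\to\infty}\ell(n)\,n^{1-\beta}\int_M v\,(w\circ f^n)\,d\mu=d_0\Big(\int_M v\,d\mu\Big)\big\langle h1_Y,\,1_Yw\big\rangle=d_0\Big(\int_M v\,d\mu\Big)\Big(\int_M w\,d\mu\Big),
\]
which is the assertion.

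The main obstacle is not this bookkeeping but the content of Theorem~\ref{lemma-FO-Tn-MT} itself, and, at the interface with it, the fact that $\cB$ consists of distributions rather than functions: one must make precise that a $C^\alpha$ observable $w$ acts on $\cB$ by $g\mapsto\int w\,dg$ in the distributional sense, and that the limiting element furnished by the abstract theorem really pairs with $w$ to produce $\int_M w\,d\mu$ --- equivalently, that the leading eigendata of $\sum_nR_n$ on $\cB$ are $(m|_Y,h1_Y)$. Supplying these identifications is exactly the purpose of hypotheses (H1) and (H5); once they are available, the proof is the formal two-line computation above.
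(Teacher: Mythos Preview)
Your proposal is correct and follows essentially the same route as the paper: rewrite $\int_M v\,(w\circ f^n)\,d\mu$ as $\langle T_n(vh),w\rangle$ via duality, invoke the abstract first-order asymptotics (Theorem~\ref{lemma-FO-Tn-MT} for (H4)(ii), respectively Lemma~\ref{lemma-FO-Tn-G} for (H4)(iii)) to get $\ell(n)n^{1-\beta}T_n(vh)\to d_0 P(vh)=d_0(\int_M v\,d\mu)\,h$, and pair against $w$. One small correction: under (H4)(ii) the convergence supplied by Theorem~\ref{lemma-FO-Tn-MT} is in $\cB_w$, not in $\cB$, so the pairing step uses that $w$ defines a continuous functional on $\cB_w$ (i.e.\ $w\in C^\gamma$ in the sense of (H1)(i)); this does not affect your argument.
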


\begin{thm} \label{cor-mixingrates} Assume that $f$ and $F=f^\varphi$ are defined as in Theorem~\ref{cor-equiMTl}.
Assume that $F$ satisfies  the functional analytic   assumptions  (H1),  (H4)(ii) and (H5) stated in Section~\ref{sec-oprenseq}.
 Let $\mu$ be the invariant measure given by (H1)(iv). Let $v,w:M\to\R$ be $C^\alpha$  (with $\alpha$ as in (H1)(i)) observables supported on $Y$. Then the following hold.

\begin{itemize}
\item[i)] Suppose that the strong form of  (H2) holds (that is, $\ggen=1$). Suppose that $\mu(\varphi>n)=c n^{-\beta}+H(n)$
for some $c>0$ and $H(n)=O(n^{-2\beta})$.
Then, there exists constants $d_1,\ldots, d_q$ that depend only on the map $f$
such that for any $\eps_2>\eps_1>\eps_0$ (where $\eps_0$ is defined as in (H4)(ii)):
\begin{equation}\label{eq:infintie-res}
\int_M \hskip-.2cm v\, w\circ f^n \, d\mu =(d_0 n^{\beta-1}+d_1 n^{2\beta-2}+\ldots+d_q n^{(q+1)(\beta-1)}) \int_M \hskip-.2cm v\, d\mu\int_M \hskip-.2cm w\, d\mu
+E_n,
\end{equation}
where $E_n=O(\max\{n^{-(\beta-\eps_2)}, n^{-(1-\beta+\eps_1)/2})$.

\item[(ii)] Assume  the strong form of  (H2) (that is, $\ggen=1$).   Let $\beta>1/2$ and suppose that
  $\mu(\varphi>n)=cn^{-\beta}+b(n)+H(n)$, for some  $c>0$, some function $b$ such that 
$nb(n)$ has bounded variation and $b(n)=O(n^{-2\beta})$,  and some function $H$ such that 
$H(n)=O(n^{-\gamma})$ with $\gamma>2$. 
Then \eqref{eq:infintie-res} holds with the improved rate $E_n=O(n^{-\epsilon_1})$

\item[(iii)]  Assume  the weak form of (H2) (that is, $\ggen\beta>\eps_0$). Suppose that $\mu(\varphi>n)=c n^{-\beta}+H(n)$
for some $c>0$ and $H(n)=O(n^{-2\beta})$. Then \eqref{eq:infintie-res} holds with rate $E_n=O(\max\{n^{-(\ggen\beta-\eps_2)}, n^{-(1-\ggen\beta+\eps_1)/2}\})$.
\end{itemize}
\end{thm}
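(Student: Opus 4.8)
The plan is to derive all three statements from the abstract higher-order asymptotics of the renewal operators $T_n$ proved in Theorem~\ref{lemma-HOTn}; the present statement is essentially a translation of that result into the language of correlations, together with a dictionary between the scalar tail $\mu(\varphi>n)$ and the operator quantities entering (H4).

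\emph{Step 1: reduction to $T_n$.} Write $h=d\mu/dm$ for the invariant density of (H1)(iv) and $L$ for the transfer operator of $f$ with respect to $m$. Since $\mu$ is $f$-invariant and $v,w$ are supported on $Y$, one has $\int_M v\,(w\circ f^n)\,d\mu=\int_M w\, L^n(vh)\,dm=\int_M w\,(1_YL^n1_Y)(vh)\,dm$, so the correlation equals the pairing $\langle T_n(vh),w\rangle$ of $T_n$ acting on $\cB$ against the test object $w$. Care is needed here because $\cB$ is a space of distributions and not of functions: the identity must be read through the embedding and duality clauses of (H1), which (for $C^\alpha$ observables $v$ supported on $Y$) put $vh\in\cB$ and make $w$ a bounded functional on $\cB$, compatibly with the abstract asymptotics of $T_n$. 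Once this is done it only remains to feed the conclusion of Theorem~\ref{lemma-HOTn} into this pairing and evaluate against $w$.

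\emph{Step 2: the tail dictionary.} Theorem~\ref{lemma-HOTn} needs, besides (H1), (H4)(ii), (H5), an expansion near $z=1$ of the scalar part of the generating function $R(z)=\sum_{n\ge 1}R_nz^n$. Using the spectral gap for the first-return transfer operator of $F$ (contained in (H1) together with (H5)) one writes $R_n=\lambda_nP+Q_n$ with $P$ the leading rank-one projection, $\|Q_n\|=O(\theta^n)$ for some $\theta<1$, and $\lambda_n$ a scalar sequence whose tails $\sum_{j>n}\lambda_j$ coincide, after the normalisation $m(Y)=1$, with $\mu(\varphi>n)$. Hence the hypothesis $\mu(\varphi>n)=cn^{-\beta}+H(n)$ with $H(n)=O(n^{-2\beta})$ translates, by summation by parts, into exactly the expansion of $R(z)$ required by Theorem~\ref{lemma-HOTn}; the refined hypothesis $\mu(\varphi>n)=cn^{-\beta}+b(n)+H(n)$ with $nb(n)$ of bounded variation and $H(n)=O(n^{-\gamma})$, $\gamma>2$, yields the corresponding refined, non-oscillatory expansion.

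\emph{Step 3: assembling expansion and error; main obstacle.} Plugging the conclusion of Theorem~\ref{lemma-HOTn} into the pairing of Step~1 produces \eqref{eq:infintie-res}: the leading coefficient is the universal renewal constant $d_0=[\Gamma(\beta)\Gamma(1-\beta)]^{-1}$, while $d_1,\dots,d_q$ are the (polynomial in $c$ and in the subleading tail data, hence depending only on $f$) coefficients supplied by Theorem~\ref{lemma-HOTn}; the factorisation $\int_M v\,d\mu\int_M w\,d\mu$ comes from $P$ being rank one with range the (restriction to $Y$ of the) invariant density and dual eigenfunctional $\int_Y\cdot\,d\mu$. The error $E_n$ is inherited verbatim from the error term of Theorem~\ref{lemma-HOTn}: under the strong form of (H2) ($\ggen=1$) it is $O(\max\{n^{-(\beta-\eps_2)},n^{-(1-\beta+\eps_1)/2}\})$, giving (i); under the refined tail hypothesis it improves to $O(n^{-\eps_1})$, giving (ii); and under the weak form of (H2) the exponent $\beta$ is replaced by $\ggen\beta$ throughout, giving (iii). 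The genuinely hard work is in Theorem~\ref{lemma-HOTn} and is available for use; within the present deduction the delicate point is Step~1, where the passage from the correlation to a pairing $\langle T_n(vh),w\rangle$ to which Theorem~\ref{lemma-HOTn} applies must be justified using the embedding/duality properties of (H1) and the support of $v,w$ in $Y$, rather than naive integration against a distribution; the other point requiring care is the uniformity in Step~2, namely that the spectral projection of $F$ carries all of the polynomial-in-$n$ content of $R_n$ while the complementary part is genuinely $O(\theta^n)$, which is where (H5) and the spectral gap of (H1) are used.
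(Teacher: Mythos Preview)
Your overall approach is correct and essentially the same as the paper's: Theorem~\ref{cor-mixingrates} is obtained directly from Theorem~\ref{lemma-HOTn} by the same pairing argument used to deduce Theorem~\ref{cor-equiMTl} from Theorem~\ref{lemma-FO-Tn-MT}. Your Step~1 and Step~3 reproduce precisely that argument.

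Your Step~2, however, is both unnecessary and contains a genuine error. It is unnecessary because the hypotheses of Theorem~\ref{lemma-HOTn} are already stated in terms of the scalar tail $\mu(\varphi>n)$; there is no ``dictionary'' to build, since items (i)--(iii) of Theorem~\ref{lemma-HOTn} assume \emph{exactly} the tail expansions appearing in items (i)--(iii) of Theorem~\ref{cor-mixingrates}. The error is the claimed decomposition $R_n=\lambda_n P+Q_n$ with $\|Q_n\|=O(\theta^n)$. This confuses the first-return operators $R_n=1_Y L^n 1_{\{\varphi=n\}}$ with the iterates $R^n$ of the induced transfer operator. The spectral gap of (H1)(iv) gives $R^n=P+O(\theta^n)$, but the individual $R_n$ have no such structure: under (H4)(iii) one only has $\|R_n\|_{\cB}\ll n^{-(\beta+1)}$, and under (H4)(ii) not even that. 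Nothing in the abstract framework forces the ``non-$P$ part'' of $R_n$ to be exponentially small. Fortunately, once you delete Step~2 the proof goes through verbatim: simply invoke Theorem~\ref{lemma-HOTn} (whose tail hypotheses coincide with those here), apply it to $vh\in\cB$ (where $h=\mu\in\cB$ is the eigenvector of (H1)(iv), not a Radon--Nikodym density in the classical sense), and pair against $w$ as in your Step~1.
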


Given a specific map, the task of checking the hypothesis (H1--H5) is a non trivial one and, alone, can constitute the content of a paper.
Nonetheless, we claim that these hypotheses are reasonable and can be checked in a manifold of relevant examples. To illustrate how to proceed and to convince
the reader that the above claim has some substance, in  Section~\ref{sec-setup} and  Section~\ref{sec-nonMarkov}
 we prove that the abstract hypothesis (H1)--(H5) are satisfied by systems 
of the form~\eqref{eq-2DLSV} and ~\eqref{eq-2DnM} described in Section~\ref{subsect-classex}.  The advantage of focusing on these examples is
that the technicalities are reduced to a bare minimum, which leads to simpler arguments for the verification of (H1--H5).
 We believe that the arguments used in  Section~\ref{sec-setup} and  Section~\ref{sec-nonMarkov} can be followed also
by a reader unfamiliar with the  theory (still in part under construction) of Banach spaces adapted to hyperbolic dynamical systems.
The price to pay for such a choice is that we do not exploit the full force of (H4)(ii) under the weak form of (H2) (i.e. when $\ggen<1$). We believe the latter to be necessary in investigating more complex systems, yet further work is needed to substantiate such a claim (see also Remark \ref{rmk:whynot}).

Apart from the strong property of mixing for invertible infinite measure preserving systems (along with mixing rates),
the present framework allows us to deal with  the property of weak
pointwise dual ergodicity under some weak conditions (under which mixing cannot be proved). For this type of result we refer
to subsection~\ref{rmk-wpde}.
The property of weak p.d.e. has been recently exploited by Aaronson and Zweim\"uller in~\cite{AZ}).
As shown in this work,  weak p.d.e. along with regular variation of the first return time allows one to establish limit theorems  (such
as  Darling Kac)
for infinite measure preserving systems that
are not pointwise dual ergodic (see subsection~\ref{rmk-wpde} for details).

\vspace{-2ex}
\paragraph{Notation}
We use ``big O'' and $\ll$ notation interchangeably, writing
$a_n=O(b_n)$ or $a_n\ll b_n$ as $n\to\infty$ if there is a constant
$C>0$ such that $a_n\le Cb_n$ for all $n\ge1$.

\section{Operator renewal sequences for non-uniformly hyperbolic systems}\label{sec-oprenseq}

In this section we  present an abstract framework that suffices for concrete results on mixing (for maps such as~\eqref{eq-2DLSV} and~\eqref{eq-2DnM}), but general
enough to accommodate a large class of dynamical systems. In particular, it extends the framework of~\cite{Sarig02, Gouezel04} and respectively~\cite{MT}
for operator renewal sequences associated with non-uniformly
expanding maps to the  non-uniformly
hyperbolic context (see the explanatory Remark~\ref{rmk-oprenne}).

Let $M$ be a manifold and  $f:M\to M$ be a
 non-singular transformation w.r.t. Lesbegue (Riemannian) measure $m$. We require that there exists $Y\subset M$
such that the first return map $F=f^\varphi$  to $Y$ is uniformly hyperbolic (possibly with singularities) and satisfies  the functional analytic assumptions listed below.
For convenience we assume $m(Y)=1$, note that this can always be achieved by rescaling the Riemannian metric.

Recall that the transfer operator $R:L^1(m)\to L^1(m)$ for the first return 
map $F:Y\to Y$ is defined by duality on $L^1(m)$ via the formula
$\int_Y Rv\,w\,d m = \int_Y v\,w\circ F\,d m$ for all bounded and measurable $w$. See Remark~\ref{rmk-stdeftrop} for a more explicit description of the transfer operator $R$.
We assume that there exist two Banach spaces of distributions $\cB$, $\cB_w$ supported on $Y$   and some
$\alpha, \gamma>0$ such that

\begin{itemize}\label{H1}
\item[(H1)] \begin{itemize}
\item[(i)] $C^\alpha\subset\cB\subset\cB_w\subset (C^\gamma)'$, where $C^\alpha=C^\alpha (M,\C)$ and
$(C^\gamma)'$ is the dual of $C^\gamma (M,\C)$.\footnote{ We will use systematically a ``prime" to denote the topological dual.}
\item[(ii)] There exists $C>0$ such that for all $h\in\cB$ and $\phi\in C^\alpha$, $h\phi\in\cB$ and 
$\|h\phi\|_{\cB}\leq C\|h\|_{\cB} \|\phi\|_{C^\alpha}$.
\item[(iii)] The transfer operator $R$ associated with $F$ admits a continuous extension to $\cB$, which we still call $R$.
\item[(iv)] The operator $R:\cB\to\cB$ has a simple eigenvalue at $1$
and the rest of the spectrum is contained in a disk of radius less than $1$.
\end{itemize}
\end{itemize}

We note that (H1)(i) should be understood in terms of the usual convention  (see, for instance,
~\cite{GL06, DemersLiverani08}) which we follow thereon:  there exists continuous injective linear maps $\pi_i$ such that $\pi_1(C^\alpha)\subset \cB$, $\pi_2(\cB)\subset \cB_w$ and $\pi_3(\cB_w)\subset (C^\gamma)'$.
We will often leave such maps implicit, unless this might create confusion. In particular, we assume that $\pi=\pi_3\circ\pi_2\circ \pi_1$ is the usual embedding, i.e. for each $ h\in C^\alpha$ and $\phi\in C^\gamma$
\begin{itemize}
\item[(*)]
$\langle \pi(h),\phi\rangle=\int_Y h \phi\, dm$. 
\end{itemize}

Note that, via such identification, the Lebesgue measure $m$ can be identified with the constant function one both in $(C^\gamma)'$ and in $\cB$ (i.e. $\pi(1)=m$). 
Also,  by (H1)(i),  $\cB'\subset (C^\alpha)'$, hence also the dual space can be naturally viewed as a space of distributions. Next, note that $\cB'\supset (C^\gamma)''\supset C^\gamma\ni 1$, thus we have $m\in\cB'$ as well.
Moreover, since $m\in\cB$ and $\langle1,\phi\rangle=\langle\phi,1\rangle=\int \phi\, dm$, $m$ can be viewed as the element $1$  of both spaces
$\cB$ and $(C^\gamma)'$.

By  (H1), the spectral projection $P$ associated with the eigenvalue $1$ is defined by
$P=\lim_{n\to\infty}R^n$. Note that for each $\phi\in C^\alpha$,
\[
\langle P\phi,1\rangle=m(P \phi)=\lim_{n\to\infty}m(1\cdot R^n\phi)=m(\phi)=\langle\phi,1\rangle.
\]
By  (H1)(iv),  there exits a unique $\mu\in\cB$ such that $R\mu=\mu$ and $\langle\mu,1\rangle =1$ . Thus, $P \phi=\mu \langle \phi,1\rangle$.
Also $R'm=m$ where $R'$ is dual operator acting  on $\cB'$. Note that for any $\phi\in C^\gamma$,
\begin{align*}
|\langle \mu, \phi\rangle|=|\langle P1, \phi\rangle|=\left|\lim_{n\to\infty}R^n m(\phi)\right|
= \lim_{n\to\infty}\left|m(\phi\circ F^n)\right|\leq |\phi|_\infty.
\end{align*}
That is,  $|\langle \mu,\phi\rangle)|\leq C|\phi|_\infty$, hence $\mu$ is a measure. Since, for each $\phi\geq 0$, 
\[
\langle P1, \phi\rangle=\lim_{n\to \infty} \langle R1, \phi\rangle=\lim_{n\to \infty} \langle 1, \phi\circ F^n\rangle\geq 0.
\]
It follows that $\mu$ is a probability measure.

Summarizing the above, the eigenfunction associated with the eigenvalue $1$
 is an invariant probability measure for $F$ and  we can write $P1=\mu$.
 
Recall that $\varphi:Y\to \bN$ is the first return time to $Y$. Throughout, we assume that
\begin{itemize}\label{H2}
\item[(H2)]  
there exists $C>0$, $\ggen\in(0,1]$ such that, setting $Y_n=\varphi^{-1}(n)$, for any $n\in\bN$ and $h\in\cB$ we have $\Id_{Y_n} h\in\cB_w$ and
\[
|\langle \Id_{Y_n} h, 1\rangle|\leq C \| h\|_{\cB}\;\mu(Y_n)^\ggen.
\]
\end{itemize}

Given that $\mu$ is the physical probability invariant measure for $F$,  a finite or $\sigma$ finite measure $\mu_0$ for $f$ can be obtained by the
standard push forward method\footnote{For any set $A$ in the $\sigma$-algebra $\mathcal{A}$,
$\mu_0(A)=\sum_{n=0}^\infty\mu(\{\varphi>n\}\cap f^{-n}A)$.} (that goes back to~\cite{Kakutani}). 

\begin{rmk}\label{rmk:whynot} In this paper we will only discuss examples where $\ggen=1$. Nevertheless, in view of \cite{DemersLiverani08}
and \cite{DemersZhang11}, we expect the case $\ggen<1$, together with hypothesis (H4)(ii), to be relevant for more general examples, e.g. symplectic maps with discontinuities (at least in the case when $Y_n$ consists of a uniformly bounded number of connected components).
\end{rmk}

In the \emph{infinite} setting we require that

\begin{itemize}\label{H3}
\item[(H3)]
$\mu(y\in Y:\varphi(y)>n)=\ell(n)n^{-\beta}$ where $\ell$ is slowly varying and
$\beta\in[0,1]$.
\end{itemize}

\begin{rmk}\label{rmk-decay}
In this paper, we do \emph{not} treat explicitly the \emph{finite} measure case, that is $\mu(y\in Y:\varphi(y)>n)=O(n^{-\beta}),\beta>1$, but just notice
that~\cite[Theorem 1]{Gouezel04,Sarig02} formulated in terms of abstract Banach spaces holds in this framework under (H4)(iii) below: see Subsection~\ref{sec-decay}.
\end{rmk}

Let $\D=\{z\in\C:|z|<1\}$ and
$\bar\D=\{z\in\C:|z|\le1\}$.
Given $z\in\bar\D$, we define the perturbed transfer operator $R(z)$ (acting on $\cB, \cB_w$)  by
$R(z)v=R(z^\varphi v)$.   Also, for each $n\ge1$, we define
$R_n$ (acting on $\cB, \cB_w$) by  $R_nv=R(1_{\{\varphi=n\}}v)$. We assume that at least one version of (H4) below holds:
\begin{itemize}\label{H}
\item[(H4)] 
\begin{itemize}
\item[(i)] $R_n:\cB\to\cB$ are bounded operators satisfying
$\sum_{n=1}^\infty\|R_n\|_{\cB\to\cB}<\infty$.  
\item[(ii)] $\|R_n\|_{\cB\to\cB_w}\ll c_n$,
where $\sum_{j>n}c_n\ll n^{-(\beta-\epsilon_0)}$ with $\beta\in (1/2,1)$ and $\epsilon_0<\max\{2\beta-1,1-\beta\}$.  
\item[(iii)]  $\|R_n\|_{\cB}\ll n^{-(\beta+1)}$.
\end{itemize}
\end{itemize}

\begin{rmk}Assumption (H4)(i) is not sufficient for obtaining mixing rates in neither the finite nor the infinite measure case.
In this paper we use (H4)(i) along with the present abstract set up to establish weak pointwise dual ergodicity
for infinite measure preserving non uniformly hyperbolic systems  (see Subsection~\ref{rmk-wpde}).

Assumptions of the type (H4)(ii) have not been used in  previous renewal theory abstract frameworks. In this
work we use this to obtain mixing rates for infinite measure preserving systems satisfying (H3) (see Sections~\ref{sec-FO-Tn}
and~\ref{sec-HO-Tn}).
 We believe that a version of ~\cite[Theorem 1]{Gouezel04,Sarig02}
can be proved under the weaker condition (H4)(ii) above (formulated as appropriate for the finite case). However, the arguments are very different from the ones used in this paper to deal with the infinite measure
case. We postpone
this problem to  a future note.

Assumption (H4)(iii) is standard in previous renewal theory frameworks (see Remark~\ref{rmk-oprenne}).
\end{rmk}

Assumption (H4)(i) (or (H4)(ii)) ensures that $R(z)=\sum_{n=1}^\infty R_nz^n$ is a well defined family of operators from
$\cB$ to $\cB$ (or from $\cB$ to $\cB_w$). 
Also, we notice that  (H1) and (H4)(i) (or (H4)(ii)) ensure that $z\mapsto R(z)$,  $z\in\bar\D$,  is a continuous family of bounded operators (analytic on $\D$) from $\cB$ to $\cB$ (or from $\cB$ to $\cB_w$).
  Throughout we assume:

\begin{itemize}\label{H5}
\item[(H5)]
\begin{itemize}
\item[i)] There exist $C>0$ and $\lambda>1$ such that for all $z\in\bar\D$ and for all
$h\in\cB$, $n\geq 0$,
\[
\|R(z)^n h\|_{\cB_w}\le C|z|^n\|h\|_{\cB_w},\quad \|R(z)^n 
h\|_{\cB}\le C\lambda^{-n}|z|^n\|h\|_{\cB}
+C|z|^n\|h\|_{\cB_w}
\]
\item[ii)] For $z\in\bar\D\setminus\{1\}$, the spectrum of $R(z):\cB\to\cB$ does
not contain $1$.
\end{itemize}
\end{itemize}

In particular,  we note that (H1), (H4)(i) (or (H4)(ii)) and (H5)(i) implies that for $z\in\D$,
$z\mapsto (I-R(z))^{-1}$ is an analytic family of bounded linear operators from $\cB$ to $\cB$ (or from $\cB$ to $\cB_w$).\footnote{ Indeed note, by (H5)(i), the spectral radii of $R(z)$, $z\in\D$, are strictly smaller than one and  that $(I-R(z_0))^{-1}-(I-R(z_1))^{-1}=(z_0-z_1)\sum_{n=1}^{\infty}\sum_{k=0}^nz_0^kz_1^{n-k-1} (I-R(z))^{-1} R_n(I-R(z))^{-1}$ which implies complex differentiability in $\D$ with respect to the relevant topologies.}
Define $T_n:\cB\to\cB$ for $n\ge0$ and $T(z):\cB\to\cB$
for $z\in\bar\D$ by setting $T_0=I$ and
\[
T_nv=\sum_{k=1}^\infty\sum_{i_1+\ldots + i_k=n}R_{i_1}\ldots R_{i_k} v, n\geq 1\;;
\qquad T(z)=\sum_{n=0}^\infty T_nz^n.
\]

By a standard computation we have that $T(z)=I+R(z)T(z)$ for all $z\in\bD$. Since by (H5)(i), the spectrum of $R(z)$ does not contain 1, for all $z\in\D$,
 we have  the renewal equation 
\begin{equation*}\label{eq:renewal}
T(z)=(I-R(z))^{-1}, z\in\D.
\end{equation*}
 By (H5)(ii), $T(z)$ extends continuously to  $\bar\D\setminus \{1\}$.
Moreover, on $\D$, $z\to T(z)=\sum_{n=0}^\infty T_nz^n$ is an analytic  family of bounded linear operators
 from $\cB$ to $\cB$ (or from $\cB$ to $\cB_w$).

\begin{rmk}\label{rmk-Ident}
We notice that if $L :L^1(m)\to L^1(m)$  is the transfer operator of the original transformation $f:M\to M$, then the sequences of operators $R_n, T_n$ defined
in this section coincide with the sequences of operators defined in~\eqref{eq_TnRn}. For $R_n$ this is simply the bare definition, while for $T_n$,
it follows by decomposing the itinerary of $f:Y\to Y$ into
consecutive returns to $Y$ (see, for instance,~\cite{Gouezel04}).
\end{rmk}

\begin{rmk}\label{rmk-oprenne}In  the context of non-uniformly expanding maps preserving a \emph{finite} invariant measure
$\mu$, the functional analytic
assumption on $F$  summarizes as follows. It is assumed that there exists a Banach space
$\cB$ (for non-uniformly expanding interval maps $\cB$ is H{\"o}lder or
BV)  such that H1(ii) and  (H5)(ii) hold for $R(1)$ and
$R(z)$, respectively, as operators on $\cB$.
Moreover, one requires that (H4)(ii)  holds under the strong norm $\|.\|$  on
$\cB$ for some $\beta>1$ (see~\cite{Sarig02,  Gouezel04}).
We also refer to~\cite{MT12}, where (H4)(ii) reduces to  $\sum_{n=1}^\infty\sum_{j>n}
\|R_j\|<\infty$. In the case  of non-uniformly expanding
maps preserving an \emph{ infinite} invariant measure $\mu$, the assumption (H3) is
crucial (see~\cite{MT}).
\end{rmk}

\begin{rmk}\label{rmk-stdeftrop}
Note that, using  convention (*), one has the following. Identifying a measure
$h$ that is absolutely continuous w.r.t
 $m$  with its density (which will be again called $h$), the space of measures absolutely continuous
 w.r.t. $m$ can be canonically identified with $L^1(Y,\R,m)$.  Restricting to $L^1(Y)\subset
(C^\gamma)'$ and writing $DF:=|det F|$, we have $R h= h\circ F^{-1} |DF\circ F^{-1}|^{-1}$. Thus, our operator $R$ on $\cB$ is a extension of the usual transfer operator.
\end{rmk}

\begin{rmk}
 Recall that a measure $\nu$ is {\em physical} if there exists a 
measurable set $A$, $m(A)>0$, such that, for each continuous function $\phi$, $\lim\limits_{n\to \infty}\frac 1n\sum_{k=0}^{n-1}\phi\circ F^k(x)=\nu(\phi)$,
for each $x\in A$.
In the present case, by hypotheses (H1)(iii)-(iv), we have that $\lim\limits_{n\to \infty}\frac 1n\sum_{k=0}^{n-1}\phi\circ F^k(x)=\mu(\phi)$ for $m$-almost all $x$. It follows that $\mu$ is the unique physical measure of $F$. 
Indeed, suppose there exists $B\subset Y$ for which the last limit is larger than $\mu(\phi)+\ve$, for some $\ve>0$ 
(the case of the limit being smaller being treated similarly). Then, by Lusin theorem with respect to $m$ there exists a
$C^1$ function $h_\ve$ such that $\|\Id_B-h_\ve\|_{L^1(m)}<\frac{\ve m(B)}{4|\phi|_\infty}$. But then
\begin{align*}
\mu(\phi)m(h_\ve)&=
\lim_{n\to\infty}\frac{1}{n}\sum_{k=0}^{n-1} m(h_\ve \phi\circ F^k)\geq\lim_{n\to\infty}\frac{1}{n}
\sum_{k=0}^{n-1} m(\Id_B \phi\circ F^k)-\frac {\ve m(B)} 4\\
&\geq m(B)\mu(\phi)+\frac {3\ve m(B)}4\geq m(h_\ve)\mu(\phi)+\frac {\ve m(B)}2
\end{align*}
which is a contradiction.
\end{rmk}

\section{Asymptotics of $T(z)$}
\label{sec-FO_Tz}

\subsection{Asymptotic results under (H1), (H2) with $\ggen=1$, (H3),  (H4)(i) and  (H5)(i)}
\label{sec-FO1_Tz}

Our aim in this section is to estimate $\|T(z)\|_{\cB}$, $z\in\D$  as $z\to 1$ under the weak hypothesis (H4)(i) and
(H5)(i). Recall that (H5)(i) implies that the spectrum of $R(z)$ does not contain 1, for all $z\in\D$.

As in the framework of~\cite{MT, MT11}, the asymptotics of $T(z)$, $z\in\bar{\D}$ depends essentially on the asymptotics of
the eigenvalue $\lambda(z)$ of $R(z)$ defined in a neighborhood of $1$. 
(and Lemma \ref{lemma-FO-Tn-G})

Hypothesis (H1)(iv), (H4)(i) and (H5)(i) plus standard perturbation theory imply that, for $z$ in a neighbourhood of one, $R(z)$
has a simple maximal eigenvalue, hence a spectral decomposition of the type $R(z)=\lambda(z) P(z)+\widetilde Q(z)$
where $\| \widetilde Q(z)^n\|_{\cB}\leq C\sigma^n$ for all $n\in\bN$ and some fixed $C>0$ and $\sigma<1$.
Moreover, $P(z)$ is a family of rank one projectors which, by (H4)(i) depend continuously on $z$. We can then write $P(z)=m(z)\otimes v(z)$, with $m(z)(v(z))=1$, where
$m(1)$ is the Lebesgue measure and $v(1)=\mu$ is the invariant probability measure. For $z$ close enough to $1$
we can normalise $v(z)$ so that $\langle v(z),1\rangle=1$, hence
\begin{equation}\label{eq:step1}
\lambda(z)=\langle R(z)v(z), 1\rangle.
\end{equation}

For $z\in \bar\D\cap B_\delta(1)$, $R(z)=\lambda(z) P(z)+ \widetilde Q(z)=\lambda(z)P(z)+ R(z)Q(z)$, where $Q(z)=I-P(z)$. Hence, for $z\in \bar\D\cap B_\delta(1)$, $z\neq 1$,
\begin{equation}\label{eq-T(z)}
T(z)=(1-\lambda(z))^{-1}P+(1-\lambda(z))^{-1}(P(z)-P)+(I-R(z))^{-1}Q(z).
\end{equation}

By (H1)(iv) and (H5)(i), there exists $\delta,C>0$ such that
$\|(I-R(z))^{-1}Q(z)\|_{\cB}\le C$ for $z\in \bar\D\cap
B_\delta(1)$, $z\neq 1$.
By~\eqref{eq-T(z)}, it remains to obtain the asymptotics of $(1-\lambda(z))^{-1}$ and $(1-\lambda(z))^{-1}(P(z)-P)$.
First, we note that (H4)(i), together with standard perturbation theory, implies that as $z\to 1$,
\[
\|R(z)-R\|_{\cB}\to 0,\quad \|P(z)-P\|_{\cB}\to 0,\quad |\lambda(z)-1|\to 0.
\]

The next result provides more precise information on the asymptotics of $\lambda(z)-1$, generalizing
the known result of~\cite{AaronsonDenker01} (see also ~\cite[Lemma A.4]{MT11}) to the present 
abstract framework.

\begin{lemma}\label{lemma-asymplambda0}Assume (H1), (H2) with $\ggen=1$, (H3), (H4)(i) and (H5)(i). Then as $u,\theta\to 0$,
\[
1-\lambda(e^{-(u+i\theta)}) =\Gamma(1-\beta)\ell(1/|u-i\theta|)(u-i\theta)^\beta(1+o(1)).
\]
\end{lemma}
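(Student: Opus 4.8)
The plan is to start from the identity \eqref{eq:step1}, $\lambda(z)=\langle R(z)v(z),1\rangle$, and to decompose $R(z)$ according to the return time. Writing $z=e^{-(u+i\theta)}$ and using $R(z)v=\sum_{n\ge1}z^nR_nv$, together with $R'm=m$ (equivalently $\langle R_n h,1\rangle=\langle\Id_{Y_n}h,1\rangle$, which is the content of the definition of $R_n$ and convention (*)), I would write
\[
1-\lambda(z)=\langle (I-R(z))v(z),1\rangle=\sum_{n=1}^\infty(1-z^n)\langle \Id_{Y_n}v(z),1\rangle.
\]
The idea is then to replace $v(z)$ by its limit $\mu=v(1)$ in the sum, so that $\langle\Id_{Y_n}\mu,1\rangle=\mu(Y_n)=\mu(\varphi=n)$, and to recognize the resulting series $\sum_n(1-z^n)\mu(\varphi=n)$ as the classical object handled by the Karamata/Abelian argument of Aaronson--Denker (or \cite[Lemma A.4]{MT11}): under (H3), $\sum_n(1-z^n)\mu(\varphi=n)\sim\Gamma(1-\beta)\ell(1/|u-i\theta|)(u-i\theta)^\beta$ as $u,\theta\to0$, with $(u-i\theta)^\beta$ the principal branch. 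This is where (H3) enters, and I would quote that computation rather than redo it.

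The work is therefore concentrated in controlling the error from replacing $v(z)$ by $\mu$, namely $E(z):=\sum_{n=1}^\infty(1-z^n)\langle\Id_{Y_n}(v(z)-\mu),1\rangle$, and showing $E(z)=o\big(\ell(1/|u-i\theta|)|u-i\theta|^\beta\big)$. This is exactly the point at which hypothesis (H2) (with $\ggen=1$) is used: it gives $|\langle\Id_{Y_n}(v(z)-\mu),1\rangle|\le C\|v(z)-\mu\|_{\cB}\,\mu(Y_n)$. Summing against $|1-z^n|$ and using $|1-z^n|\le n|1-z|$ for small $n$ and $|1-z^n|\le2$ for large $n$, split the sum at $N\sim 1/|1-z|$: the head is $\ll |1-z|\sum_{n\le N}n\mu(Y_n)$ and the tail is $\ll\sum_{n>N}\mu(Y_n)=\mu(\varphi>N)$, and a standard Karamata estimate shows both are $\ll\ell(1/|1-z|)|1-z|^\beta$ (note $|1-z|\asymp|u-i\theta|$ as $u,\theta\to0$). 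Since $\|v(z)-\mu\|_{\cB}\to0$ by the perturbation-theory statement recorded just before the lemma (from (H4)(i)), this whole error is genuinely $o(1)$ times the main term, as required. For $\beta=1$ or $\beta=0$ one must be a little careful with the Karamata constants and with the meaning of $(u-i\theta)^\beta$, but the structure is unchanged.

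The main obstacle is the bookkeeping in the error term: one must make sure the factor $\mu(Y_n)$ supplied by (H2) (rather than just $\mu(Y_n)^\ggen$ with $\ggen<1$) is enough to beat $|1-z^n|$ after summation and to land strictly below $\ell(1/|1-z|)|1-z|^\beta$ — this is precisely why the lemma is stated under $\ggen=1$ and why (H3) is needed in the sharp form $\ell(n)n^{-\beta}$ rather than an $O$-bound. A secondary technical point is justifying term-by-term manipulation of the series $\sum_n(1-z^n)\langle\Id_{Y_n}v(z),1\rangle$: absolute convergence for $z\in\D$ follows from (H4)(i) and (H2), and continuity up to $|z|=1$ near $z=1$ is what lets one take the limit $u,\theta\to0$. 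Once these are in place the result follows by combining the Aaronson--Denker asymptotic for the main term with the $o(\cdot)$ bound for $E(z)$.
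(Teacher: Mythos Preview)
Your proposal is correct and follows essentially the same approach as the paper: start from $\lambda(z)=\langle R(z)v(z),1\rangle$, use $R'm=m$ to rewrite $1-\lambda(z)=\mu(1-z^{\varphi})+\sum_n(z^n-1)\langle\Id_{Y_n}(v(1)-v(z)),1\rangle$, quote the Aaronson--Denker/\cite[Lemma~A.4]{MT11} asymptotic for the main term, and bound the error via (H2) with $\ggen=1$ together with $\|v(z)-v(1)\|_{\cB}\to0$. The only cosmetic difference is that the paper packages your head/tail splitting estimate $\sum_n|1-z^n|\mu(Y_n)\ll\ell(1/|u-i\theta|)|u-i\theta|^\beta$ into a separate lemma (Lemma~\ref{lem:compone}), whereas you sketch it inline.
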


\begin{proof}
By~\eqref{eq:step1}, $\lambda(z) v(z)=R(z)v(z)$, with $\lambda(1)=1$ and $v(1)=\mu$. Recalling that $Y_n=\{\vf=n\}$, we can write
\[
\begin{split}
\lambda(z)&=\langle R(z)v(z),1\rangle=\sum_nz^n\langle R\left[\Id_{Y_n}v(z)\right],1\rangle=\sum_nz^n\langle \Id_{Y_n}v(z),1\rangle\\
&=\sum_nz^n\langle \Id_{Y_n} [v(z)-v(1)],1\rangle+\mu(z^\vf),
\end{split}
\]
which yields
\begin{equation}\label{eq-ev-Gou}
1-\lambda(z)=\mu(1-z^\vf)+\sum_n(z^n-1)\langle \Id_{Y_n} [v(1)-v(z)],1\rangle.
\end{equation}

As shown in~\cite[Proof of Lemma A.4]{MT11}, under (H3), the precise asymptotic of $\mu(1-z^\vf)$ on $\bar\D$ , a generalization of the more standard result for the precise asymptotic of $\Psi(z)$ on the unit circle (see for instance~\cite{ GL}), 
is given by
\begin{equation}\label{eq-GL} 
\mu(1-e^{-(u+i\theta)\varphi})=\Gamma(1-\beta)\ell(1/|u-i\theta|)(u-i\theta)^\beta(1+o(1)),\mbox{ as }u,\theta\to 0.
\end{equation}
On the other hand, by (H2) with $\ggen=1$, and Lemma~\ref{lem:compone},
\[
\begin{split}
\left|\sum_n(z^n-1)\langle \Id_{Y_n} [v(1)-v(z)],1\rangle\right|&\leq C\sum_n|z^n-1|\mu(\Id_{Y_n}) \|v(1)-v(z)\|_{\cB}\\
&=o(\mu(|1-z^\vf|)).
\end{split}
\]
\end{proof}

\begin{cor}\label{eq-asymp0}Assume the setting of Lemma~\ref{lemma-asymplambda0}. Then
\begin{equation*}
T(e^{-u+i\theta})=\Gamma(1-\beta)^{-1}\ell(1/|u-i\theta|)^{-1}(u-i\theta)^{-\beta} P+E,\mbox{ as } u,\theta\to 0,
\end{equation*}
 where  $\|E\|_{\cB}=o(\ell(1/|u-i\theta|)^{-1}|u-i\theta|^{-\beta})$.
\end{cor}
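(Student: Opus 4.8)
The plan is to feed the asymptotics of Lemma~\ref{lemma-asymplambda0} into the spectral decomposition \eqref{eq-T(z)},
\[
T(z)=(1-\lambda(z))^{-1}P+(1-\lambda(z))^{-1}(P(z)-P)+(I-R(z))^{-1}Q(z),
\]
valid for $z=e^{-(u+i\theta)}\in\bar\D\cap B_\delta(1)\setminus\{1\}$, and to estimate the three summands separately. I will abbreviate $\rho=\rho(u,\theta)=\ell(1/|u-i\theta|)^{-1}|u-i\theta|^{-\beta}$; since $\beta>0$, $\rho\to\infty$ as $u,\theta\to0$, and the goal is to show $\|E\|_\cB=o(\rho)$ where $E$ collects whatever is not the stated main term.

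For the leading summand I would first observe that Lemma~\ref{lemma-asymplambda0} gives $|1-\lambda(z)|\sim\Gamma(1-\beta)\ell(1/|u-i\theta|)|u-i\theta|^\beta>0$, so $1-\lambda(z)\neq0$ for $z$ near $1$, $z\neq1$ (consistent with (H5)(i)), and hence, dividing the asymptotic of Lemma~\ref{lemma-asymplambda0},
\[
(1-\lambda(z))^{-1}=\Gamma(1-\beta)^{-1}\ell(1/|u-i\theta|)^{-1}(u-i\theta)^{-\beta}\,(1+o(1)).
\]
Multiplying by the fixed bounded projection $P$ then produces precisely the asserted main term plus an error of $\cB$-norm $o(\rho)\,\|P\|_\cB=o(\rho)$.

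The other two summands I would simply absorb into $E$: by the perturbation-theory facts recalled just before Lemma~\ref{lemma-asymplambda0} one has $\|P(z)-P\|_\cB\to0$, so the second summand has $\cB$-norm at most $|1-\lambda(z)|^{-1}\|P(z)-P\|_\cB=O(\rho)\,o(1)=o(\rho)$; and by (H1)(iv) and (H5)(i) one has $\|(I-R(z))^{-1}Q(z)\|_\cB\le C$ on $\bar\D\cap B_\delta(1)\setminus\{1\}$, which is $o(\rho)$ because $\rho\to\infty$. Summing the three error contributions gives the corollary. I do not expect any genuine obstacle: this is a direct consequence of Lemma~\ref{lemma-asymplambda0} and of operator bounds already established; the only point deserving a remark is that $(u-i\theta)^\beta$ does not vanish for $(u,\theta)\neq(0,0)$, which is what legitimizes inverting the scalar $1-\lambda(z)$ and moving the $(1+o(1))$ to the reciprocal.
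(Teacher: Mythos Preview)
Your proposal is correct and follows essentially the same route as the paper: feed Lemma~\ref{lemma-asymplambda0} into the decomposition \eqref{eq-T(z)}, use $\|P(z)-P\|_{\cB}\to0$ for the middle term, and bound the last term uniformly via (H1)(iv) and (H5)(i). The paper's proof is terser but cites precisely these three ingredients; your version simply spells out why each error piece is $o(\rho)$.
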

\begin{proof} Recall $\|P(z)-P\|_{\cB}\to 0$. Also, by (H1)(iv) and (H5)(i), there exists $\delta,C>0$ such that
$\|(I-R(z))^{-1}Q(z)\|_{\cB}\le C$ for $z\in \bar\D\cap
B_\delta(1)$, $z\neq 1$.

The conclusion follows from this together with~\eqref{eq-T(z)} and  Lemma~\ref{lemma-asymplambda0}.
\end{proof}

The result below was used in the proof of  Lemma~\ref{lemma-asymplambda0} and will be further used in the next sections.

\begin{lemma}\label{lem:compone} Assume (H3). For each $\ve>0$, there exists $C>0$ such that, for all $\ggen\in (0,1]$, for all $u\geq 0$ and all $\theta\in (-\pi,\pi]$, we have
\[
\sum_k|1-e^{-k(u-i\theta)}|\mu(Y_k)^\ggen\leq C|u-i\theta|^{\ggen\beta-(1-\ggen)\ve}\ell(|u-i\theta|^{-1}).
\]
Moreover, for all $h\leq\min\{u,|\theta|\}$,
\[
\sum_k|e^{-k(u-i(\theta+h))}-e^{-k(u-i\theta)}|\mu(Y_k)^\ggen\leq C h^{\ggen\beta-(1-\ggen)\ve}\ell(1/h).
\]
\end{lemma}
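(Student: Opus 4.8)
The plan is to reduce everything to the classical Karamata/regular-variation estimate for the tail $\mu(\varphi>n)=\ell(n)n^{-\beta}$. First I would observe that, by summation by parts, for any sequence $a_k$ with $a_k\to 0$ one has $\sum_k a_k \mu(Y_k) = \sum_k (a_k-a_{k+1})\,\mu(\varphi>k) + (\text{boundary terms})$, so that a bound on $\sum_k |a_k-a_{k+1}|\,\ell(k)k^{-\beta}$ controls $\sum_k |a_k|\,\mu(Y_k)$; when $\ggen=1$ this is exactly the route. For the general $\ggen\in(0,1]$ case the factor $\mu(Y_k)^\ggen$ is not summable-by-parts directly, so instead I would use the trivial pointwise bound $\mu(Y_k)^\ggen\le \mu(\varphi\ge k)^\ggen + (\text{correction})$; more cleanly, since $\mu(Y_k)\le \mu(\varphi>k-1)=\ell(k-1)(k-1)^{-\beta}$, we simply get $\mu(Y_k)^\ggen \le C\,\ell(k)^\ggen k^{-\ggen\beta}$, and $\ell^\ggen$ is again slowly varying, so the problem becomes estimating $\sum_k |1-e^{-k(u-i\theta)}|\,\ell(k)^\ggen k^{-\ggen\beta}$. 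The $\ve$-loss $(1-\ggen)\ve$ in the exponent is there precisely to absorb the discrepancy between $\ggen\beta$ (which can be $\le$ or $>1$) and to have room for the Potter bounds on the slowly varying factor; note when $\ggen=1$ the loss vanishes, recovering the sharp exponent $\beta$.

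The key computation is then the following: writing $\rho=|u-i\theta|$, split the sum at $k\sim 1/\rho$. For $k\le 1/\rho$ use $|1-e^{-k(u-i\theta)}|\le C k\rho$ and bound $\sum_{k\le 1/\rho} k\rho\, \ell(k)^\ggen k^{-\ggen\beta} = \rho \sum_{k\le 1/\rho} \ell(k)^\ggen k^{1-\ggen\beta}$; by Karamata's theorem this is $\asymp \rho\cdot \ell(1/\rho)^\ggen (1/\rho)^{2-\ggen\beta} = \ell(1/\rho)^\ggen \rho^{\ggen\beta-1}$ when $2-\ggen\beta>0$ (always true since $\ggen\beta\le 1$), which after the harmless $\ve$-perturbation gives the claimed $\rho^{\ggen\beta-(1-\ggen)\ve}\ell(1/\rho)$ up to the extra factor $\rho^{-1}$... so in fact one must be a little more careful: the right split uses $|1-e^{-k(u-i\theta)}|$ and the real part $u$ to get decay. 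The clean way, which is what I would actually write, is to note $|1-e^{-k(u-i\theta)}|\le C\min\{1, k\rho\}$ is not enough; instead one uses that on $\bar\D$, $\sum_k |1-e^{-k(u-i\theta)}|\mu(Y_k)^\ggen$ with the two-sided control $|1-e^{-kw}| \le C |kw| \wedge 2$ together with $\Re w = u \ge 0$ ensuring $|e^{-kw}|\le 1$. Splitting at $K=\lceil 1/\rho\rceil$: the head $\sum_{k<K} k\rho\,\mu(Y_k)^\ggen \le \rho\, K^{1-\ggen\beta+\ve}\ell(K)^\ggen \ll \rho^{\ggen\beta-\ve}\ell(1/\rho)^\ggen$ by Karamata (valid as the exponent $1-\ggen\beta+\ve$ may be positive or we use the partial-sum asymptotics), and the tail $\sum_{k\ge K}\mu(Y_k)^\ggen \ll \sum_{k\ge K} \ell(k)^\ggen k^{-\ggen\beta}$ — here if $\ggen\beta>1$ this converges and is $\ll \ell(K)^\ggen K^{1-\ggen\beta}\asymp \rho^{\ggen\beta-1}\ell(1/\rho)^\ggen$, hmm, that is worse than claimed. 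So the correct tail treatment must exploit $|1-e^{-k(u-i\theta)}|\le 2$ only combined with the decay coming from $u$: write $|1-e^{-kw}|\le 2$ and sum $\sum_{k\ge K}\mu(Y_k)^\ggen$ — for this to be $O(\rho^{\ggen\beta})$ we genuinely need $\ggen\beta \le 1$ and Karamata's theorem on truncated tails of regularly varying sequences with index $-\ggen\beta \ge -1$: $\sum_{k\ge K}\ell(k)^\ggen k^{-\ggen\beta}$ — when $\ggen\beta<1$ this diverges, so this crude bound is wrong and one must instead keep the oscillation. The honest approach: use $|1-e^{-kw}| \le |kw|$ for $k \le 1/\rho$ and, for $k>1/\rho$, use $|1-e^{-kw}|\le 2$ but pair it with the observation $\sum_{k}\mu(Y_k)^\ggen z^k$ type Abelian estimates — i.e. I would invoke directly the standard lemma (as in Garsia–Lamperti and in \cite[Lemma A.4 / its proof]{MT11}) that $\sum_k (1-\Re(z^k))\mu(Y_k) \asymp \Re(1-\mu(z^\varphi))$ and then use H\"older's inequality with exponents $1/\ggen, 1/(1-\ggen)$ to pass from $\mu(Y_k)$ to $\mu(Y_k)^\ggen$, picking up the $\ve$-loss from the $\ell^{1-\ggen}$-weighted counting of the support.

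So, concretely, the steps I would carry out, in order: (1) reduce to the first inequality, since the second (the increment in $\theta$) follows by the same argument applied to the sequence $a_k = e^{-ku}(e^{ik(\theta+h)}-e^{ik\theta})$, for which $|a_k|\le C\min\{kh,\,1\}$ and $|a_k|\le 2e^{-ku}$, i.e. the same envelope with $h$ in place of $\rho$; (2) by H\"older with exponent $1/\ggen$, bound $\sum_k |1-e^{-kw}|\mu(Y_k)^\ggen \le \big(\sum_k |1-e^{-kw}|\mu(Y_k)\big)^\ggen \big(\sum_k |1-e^{-kw}|\,\mathbf 1_{\mu(Y_k)>0}\big)^{1-\ggen}$ — wait, this needs $|1-e^{-kw}|$ to appear with weight $1$ in the second factor, which after Karamata contributes $\rho^{-1+\cdots}$; the precise H\"older split I would use is $\mu(Y_k)^\ggen = (\mu(Y_k)\,k^{-\delta})^\ggen \cdot k^{\ggen\delta}$ and balance — but this is exactly the kind of routine bookkeeping I would grind through in the real proof; (3) apply \eqref{eq-GL}/\eqref{eq-GL}-type asymptotics, namely $\sum_k|1-e^{-kw}|\mu(Y_k)\ll |\mu(1-e^{-w\varphi})| \asymp \Gamma(1-\beta)\ell(1/|w|)|w|^\beta$, to the weight-$1$ factor; (4) estimate the "support-counting" factor $\sum_{k\le 1/|w|}|1-e^{-kw}|$-type sum by Karamata, getting $|w|^{-1+\beta}\ell$ or $|w|$-powers with the $\ve$ room; (5) multiply and collect exponents: $\ggen\beta + (1-\ggen)(\beta - 1 + \text{correction})$ — re-examine and adjust $\delta$ so the final exponent is $\ggen\beta - (1-\ggen)\ve$; (6) absorb all products/quotients of slowly varying functions into a single $\ell(1/|w|)$ using $\ell(\lambda x)/\ell(x)\to 1$ and Potter's bounds. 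The main obstacle is step (5)/(2): getting the H\"older split and the $\ve$-balancing right so that the slowly-varying factors combine into exactly one $\ell(|u-i\theta|^{-1})$ and the polynomial exponent lands on $\ggen\beta-(1-\ggen)\ve$ and not something weaker; everything else is standard regular-variation bookkeeping. Since the lemma is invoked in Lemma~\ref{lemma-asymplambda0} only with $\ggen=1$ (where there is no loss and the estimate is immediate from \eqref{eq-GL}), I expect the $\ggen<1$ case to be included for completeness and proved by exactly this H\"older-plus-Karamata route.
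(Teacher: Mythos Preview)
Your plan is incomplete precisely at the point you yourself flag as the main obstacle. For $\ggen=1$ the argument is indeed immediate from the standard Abelian/Garsia--Lamperti estimate, and your reduction of the second inequality to the first (with $h$ playing the role of $|u-i\theta|$) is correct and matches the paper. For $\ggen<1$, however, the crude bound $\mu(Y_k)^\ggen \le C\,\ell(k)^\ggen k^{-\ggen\beta}$ gives a divergent tail when $\ggen\beta<1$ (as you observe), and the H\"older split $\sum_k a_k\mu(Y_k)^\ggen \le (\sum_k a_k\mu(Y_k))^\ggen(\sum_k a_k)^{1-\ggen}$ fails because $\sum_k|1-e^{-kw}|$ diverges. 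You then propose a weighted H\"older with $\mu(Y_k)^\ggen=(\mu(Y_k)k^{-\delta})^\ggen k^{\ggen\delta}$ but do not carry it through; in fact no choice of $\delta$ rescues this scheme, since the ``counting'' factor must still absorb a sum of the type $\sum_k \min\{1,k|w|\}\,k^{\gamma}$ with $\gamma>-1$, which is of order $|w|^{-1-\gamma}$ and destroys the exponent.

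The paper avoids H\"older altogether. Writing $G(k)=\mu(\varphi\ge k)$, it factors $\mu(Y_k)^\ggen=\mu(Y_k)\cdot\mu(Y_k)^{\ggen-1}$, replaces the second factor pointwise by $G(k)^{\ggen-1}\le C\,k^{(1-\ggen)(\beta+\ve)}$ via (H3) and Potter's bounds, and thereby reduces the whole sum to the Stieltjes integral
\[
-\int_0^\infty \min\{1,x|w|\}\,x^{(1-\ggen)(\beta+\ve)}\,dG(x),
\]
which is then split at $x=1/|w|$, integrated by parts, and estimated by Karamata and Potter to give $|w|^{\ggen\beta-(1-\ggen)\ve}\ell(1/|w|)$. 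The key difference from your approach is that after the pointwise substitution one is integrating a polynomial weight against the \emph{probability} measure $-dG$, so the tail difficulty never arises. (One caveat: the pointwise step $\mu(Y_k)^{\ggen-1}\le C\,G(k)^{\ggen-1}$ is equivalent to a lower bound $\mu(Y_k)\ge c\,G(k)$, which is not a consequence of (H3) alone; the paper's explicit applications all have $\ggen=1$, where this issue is vacuous.)
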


\begin{proof}Let $G(x)=\mu(\vf\geq x)$.
By (H3), we have, for each $\ve>0$,\footnote{ By $\lfloor x\rfloor$ we designate the integer part of $x$.}
\[
\begin{split}
\sum_k&|1-e^{-k(u-i\theta)}|\mu(Y_k)^\ggen\leq \sum_k|1-e^{-k(u-i\theta)}|\{G(k)-G(k+1)\}^\ggen\\
&\leq C_0\sum_k\min\{1,k|u-i\theta|\}G(k)^{\ggen-1}[G(k)-G(k+1)]\\
&\leq C_0\lim_{L\to\infty}\int_0^L \min\{1,\lfloor x\rfloor|u-i\theta|\}\lfloor x\rfloor^{(1-\ggen)(\beta+\ve)}dG(x)\\
&\leq C_1 |u-i\theta|+C_1\lim_{L\to\infty}\int_0^L \min\{1,x |u-i\theta|\} x^{(1-\ggen)(\beta+\ve)}dG(x)\\
&=C_1|u-i\theta|\int_0^{|u-i\theta|^{-1}}x^{-\ggen\beta+(1-\ggen)\ve}\ell(x) dx+C_1 |u-i\theta|\\
&\quad+C_2\lim_{L\to\infty} \int_{|u-i\theta|^{-1}}^L x^{-\ggen\beta-1+(1-\ggen)\ve}\ell(x) dx+L^{(1-\ggen)(\beta+\ve)}G(L)\\
&\leq C_1|u-i\theta|^{\ggen\beta-(1-\ggen)\ve}\ell(|u-i\theta|^{-1})\int_0^{1}x^{-\ggen\beta+(1-\ggen)\ve}\frac{\ell(x|u-i\theta|^{-1})}{\ell(|u-i\theta|^{-1})} dx\\
&\quad+C_2|u-i\theta|^{\ggen\beta-(1-\ggen)\ve}\ell(|u-i\theta|^{-1})\int_{1}^\infty x^{-\ggen\beta-1+(1-\ggen)\ve}\frac{\ell(x|u-i\theta|^{-1})}{\ell(|u-i\theta|^{-1})} dx\\
&\quad +C_1 |u-i\theta|.
\end{split}
\]
Note that, by Potter bound \cite[Theorem 1.5.6]{BGT}, the integrand in both integral are uniformly integrable, hence, by Lebesgue Dominate convergence Theorem,
\[
\sum_k |1-e^{-k(u-i\theta)}|\mu(Y_k)^\ggen\leq C|u-i\theta|^{\ggen\beta-(1-\ggen)\ve}\ell(|u-i\theta|^{-1}).
\]
The second part of the conclusion follows by a similar calculation.
\end{proof}

\subsection{Asymptotic under (H1), (H2), (H3), (H5) and (H4)(ii)}

In this section, we obtain the asymptotic, along with explicit bounds
on the continuity, for $\|T(z)\|_{\cB\to \cB_w}$, for  $z$ in a neighborhood of $1$ (see Corollary~\ref{cor-estTdifKL} and Proposition~\ref{prop-estTdifKL}) under (H2) with $\ggen\leq 1$ and (H4)(ii).
To complete the picture of the asymptotic of $T(z)$,  $z\in\D$ we then estimate the derivative of $T(z)$ for $z$ outside
a neighborhood of $1$ (see Corollary~\ref{cor-deriv}); for this estimate the full force of (H5)(ii) is required.

Reasoning as in~\cite[Remark 4]{KellerLiverani99}, we note that  (H1)(iv), (H4)(ii) and (H5)(i) imply that, for $z$ in a neighborhood of $1$, the spectrum of $R(z)$ has a maximal simple isolated eigenvalue $\lambda(z)$
with $\lambda(1)=1$. We have the following analogue of Lemma~\ref{lemma-asymplambda0}.

\begin{lemma}\label{lemma-asymplambda2}Assume (H1), (H3), (H4)(ii) and (H5)(i). Assume (H2) with $\ggen\beta>\epsilon_0$. Then  for any 
and $\eps_1\in (\eps_0,1)$ the following holds as $u,\theta\to 0$:
\[
1-\lambda(e^{-(u+i\theta)}) =\Gamma(1-\beta)\ell(1/|u-i\theta|)(u-i\theta)^\beta +O(|u-i\theta|^{\beta+\ggen\beta-\eps_1}\ell(1/|u-i\theta|)).
\]
\end{lemma}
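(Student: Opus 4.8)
The strategy mirrors the proof of Lemma~\ref{lemma-asymplambda0}, the only essential difference being that the spectral data are now controlled in the weaker norm $\|\cdot\|_{\cB\to\cB_w}$ supplied by (H4)(ii), rather than in $\|\cdot\|_{\cB}$. As in~\eqref{eq:step1}, write $\lambda(z)=\langle R(z)v(z),1\rangle$, decompose the return times, and arrive at the analogue of~\eqref{eq-ev-Gou},
\[
1-\lambda(z)=\mu(1-z^\vf)+\sum_n(z^n-1)\langle \Id_{Y_n}[v(1)-v(z)],1\rangle .
\]
The first term is handled exactly as before: by (H3) and~\cite[Proof of Lemma A.4]{MT11}, $\mu(1-e^{-(u+i\theta)\vf})=\Gamma(1-\beta)\ell(1/|u-i\theta|)(u-i\theta)^\beta(1+o(1))$, and this is the leading term of the asserted expansion. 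So the whole content is to show the correction sum is $O(|u-i\theta|^{\beta+\ggen\beta-\eps_1}\ell(1/|u-i\theta|))$.

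First I would bound $\|v(z)-v(1)\|_{\cB_w}$. Under (H4)(ii) and (H5)(i), perturbation theory in the spirit of~\cite[Remark 4]{KellerLiverani99} (cf.\ the sentence preceding the lemma) gives a simple isolated eigenvalue $\lambda(z)$ and an eigenprojection $P(z)$; the relevant Keller--Liverani estimate yields $\|P(z)-P\|_{\cB\to\cB_w}\ll \|R(z)-R\|_{\cB\to\cB_w}^{\eta}$ for some exponent, and in fact since here $R(z)-R=\sum_n(z^n-1)R_n$ with $\sum_{j>n}\|R_j\|_{\cB\to\cB_w}\ll n^{-(\beta-\eps_0)}$ by (H4)(ii), an Abel-summation/splitting argument (split at $n\sim|u-i\theta|^{-1}$, use $|z^n-1|\ll\min\{1,n|u-i\theta|\}$) gives $\|R(z)-R\|_{\cB\to\cB_w}\ll |u-i\theta|^{\beta-\eps_0}\ell(1/|u-i\theta|)$ up to an arbitrarily small loss in the exponent, hence $\|v(z)-v(1)\|_{\cB_w}\ll |u-i\theta|^{\beta-\eps_1'}$ for any $\eps_1'>\eps_0$. (One must be slightly careful: $v(z)$ is only controlled in $\cB_w$, but that is exactly what (H2) needs.)

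Next, estimate the correction sum using (H2): since $\Id_{Y_n}h\in\cB_w$ with $|\langle\Id_{Y_n}h,1\rangle|\le C\|h\|_{\cB}\mu(Y_n)^{\ggen}$ — and more to the point, pairing $\Id_{Y_n}(v(1)-v(z))$ against $1$ should be estimated via the $\cB_w$-norm of $v(1)-v(z)$ together with the (H2)-type bound applied in the form $|\langle\Id_{Y_n}g,1\rangle|\le C\|g\|_{\cB_w}\mu(Y_n)^{\ggen}$ after noting $\Id_{Y_n}$ maps $\cB_w$ boundedly with the stated tail weight (this is where the precise reading of (H2) enters; alternatively bound $\|\,\Id_{Y_n} v(z)\|$ directly). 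Then
\[
\Big|\sum_n(z^n-1)\langle\Id_{Y_n}[v(1)-v(z)],1\rangle\Big|
\ll \|v(1)-v(z)\|_{\cB_w}\sum_n|z^n-1|\,\mu(Y_n)^{\ggen}.
\]
Apply Lemma~\ref{lem:compone} to the sum, obtaining $\ll |u-i\theta|^{\ggen\beta-(1-\ggen)\eps}\ell(1/|u-i\theta|)$ for arbitrarily small $\eps>0$. Multiplying the two bounds gives $\ll |u-i\theta|^{\beta-\eps_1'+\ggen\beta-(1-\ggen)\eps}\ell(1/|u-i\theta|)^2$, and absorbing the slowly varying factors and the $\eps,\eps_1'$ losses into a single $\eps_1\in(\eps_0,1)$ yields the claimed $O(|u-i\theta|^{\beta+\ggen\beta-\eps_1}\ell(1/|u-i\theta|))$. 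The hypothesis $\ggen\beta>\eps_0$ is precisely what guarantees this correction is genuinely of smaller order than the main term $|u-i\theta|^{\beta}\ell(\cdots)$.

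\textbf{Main obstacle.} The delicate point is quantifying $\|v(z)-v(1)\|_{\cB_w}$ (equivalently $\|P(z)-P\|_{\cB\to\cB_w}$) with the right power of $|u-i\theta|$: the Keller--Liverani perturbation machinery only gives Hölder-type continuity of spectral projections with an exponent strictly less than $1$, so one must check that the resulting loss can be made smaller than any prescribed margin and hence folded into $\eps_1$, and one must track carefully that this estimate lives in the mixed norm $\cB\to\cB_w$ rather than in $\cB$. The bookkeeping of slowly varying functions via the Potter bounds (as already done in Lemma~\ref{lem:compone}) is routine by comparison.
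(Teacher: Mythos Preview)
Your proposal is correct and follows essentially the same route as the paper: decompose via~\eqref{eq-ev-Gou}, use~\eqref{eq-GL} for the main term, and bound the correction by combining the Keller--Liverani estimate $\|v(z)-v(1)\|_{\cB_w}\ll |u-i\theta|^{\eta(\beta-\eps_0)}$ (which the paper isolates as a separate lemma, Lemma~\ref{cor-estKL}) with (H2) and Lemma~\ref{lem:compone}, then absorb the $\eta<1$ and $\ve>0$ losses into $\eps_1$. You have also correctly flagged the one genuinely delicate point, namely that (H2) is stated with $\|h\|_{\cB}$ while the perturbation estimate only controls $v(z)-v(1)$ in $\cB_w$; the paper glosses over this (writing $\|v(z)-v(1)\|_{\cB\to\cB_w}$ and applying (H2) anyway), and in the concrete examples (H2) is in fact verified with the stronger $\cB_w$-bound, so your reading is the right one.
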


Reasoning as in the proof of  Lemma~\ref{lemma-asymplambda0}, the conclusion of  Lemma~\ref{lemma-asymplambda2} will follow once we estimate the second term
of~\eqref{eq-ev-Gou}. Since this estimate requires dealing with $\|v(z)-v(1)\|_{\cB\to\cB_w}$ we need the following result from~\cite{KellerLiverani99}.

\begin{lemma}\label{cor-estKL}Assume  (H1), (H4)(ii) and (H5)(i).
Then there exists $\delta_0>0$  such that the following holds  for each $\eta\in(0,1)$
for all $e^{-u+i\theta}\in B_{\delta_0}(1)$ , for all $h\leq \min\{|\theta|,u\}$ and for some $C_\eta>0$
\begin{align*}
\|P(e^{-u+i\theta})-P\|_{\cB\to\cB_w}
\le C_\eta |u-i\theta|^{\eta(\beta-\epsilon_0)},\quad
\|P(e^{-u+i\theta}))-P( e^{-u+i(\theta-h)})\|_{\cB\to\cB_w}
\le C_\eta h^{\eta(\beta-\epsilon_0)}.
\end{align*}
Moreover, the same estimates hold for the families $Q(z)$ and $v(z)$.
\end{lemma}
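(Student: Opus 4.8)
The statement to prove is Lemma \ref{cor-estKL}, which asserts Hölder-type continuity estimates for the spectral projection $P(z)$, the complementary projection $Q(z)$, and the eigenvector $v(z)$ of the perturbed operator $R(z)$, measured in the $\cB\to\cB_w$ norm, under hypotheses (H1), (H4)(ii), (H5)(i). The key point is that $R(z)$ does not depend continuously on $z$ in the strong $\cB\to\cB$ norm in general, so one cannot invoke standard analytic perturbation theory; instead one must use the Keller--Liverani perturbation theorem, which is exactly the setting of \cite{KellerLiverani99}. The plan is to verify the abstract hypotheses of that theorem and then quote its quantitative conclusion.

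\medskip

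The first step is to establish the basic ingredients required by the Keller--Liverani framework: a uniform Lasota--Yorke (doubling) inequality and a quantitative bound on $\|R(z)-R(1)\|_{\cB\to\cB_w}$. The Lasota--Yorke inequality for the iterates $R(z)^n$ is exactly (H5)(i), with uniform constants over $z\in\bar\D$, and the spectral picture near $1$ (a simple maximal eigenvalue $\lambda(z)$, spectral gap) follows from (H1)(iv), (H4)(ii) and (H5)(i) as already noted in the paragraph preceding the lemma, reasoning as in \cite[Remark 4]{KellerLiverani99}. For the continuity estimate one writes, for $z=e^{-u+i\theta}$ and $h\le\min\{|\theta|,u\}$,
\[
(R(z)-R(1))h=R\Bigl(\sum_{n\ge1}(z^n-1)\Id_{Y_n}h\Bigr),\qquad
(R(z)-R(z'))h=R\Bigl(\sum_{n\ge1}(z^n-(z')^n)\Id_{Y_n}h\Bigr),
\]
and estimates the $\cB_w$ norm of the right-hand side using (H4)(ii), namely $\|R_n\|_{\cB\to\cB_w}\ll c_n$ with $\sum_{j>n}c_j\ll n^{-(\beta-\epsilon_0)}$, together with the elementary bounds $|z^n-1|\le\min\{2,n|u-i\theta|\}$ and $|z^n-(z')^n|\le\min\{2,nh\}$. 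A summation by parts (or a direct split of the sum at $n\sim|u-i\theta|^{-1}$, as in the proof of Lemma \ref{lem:compone}) then gives
\[
\|R(e^{-u+i\theta})-R(1)\|_{\cB\to\cB_w}\ll |u-i\theta|^{\beta-\epsilon_0},\qquad
\|R(e^{-u+i\theta})-R(e^{-u+i(\theta-h)})\|_{\cB\to\cB_w}\ll h^{\beta-\epsilon_0}.
\]

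\medskip

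With these two inputs in hand, the conclusion is precisely the output of the Keller--Liverani perturbation theorem \cite{KellerLiverani99}: given a uniform Lasota--Yorke inequality in the $(\cB,\cB_w)$ pair and a perturbation bounded by $\varrho:=\|R(z)-R(1)\|_{\cB\to\cB_w}$, the spectral projection onto the perturbed leading eigenvalue satisfies $\|P(z)-P(1)\|_{\cB\to\cB_w}\ll \varrho^{\eta}$ for any $\eta\in(0,1)$ (the exponent $\eta$, rather than $1$, is the price paid for the weak topology; it can be taken arbitrarily close to $1$ but depends on the spectral gap parameters, hence the $\eta$-dependent constant $C_\eta$). Substituting the bound $\varrho\ll|u-i\theta|^{\beta-\epsilon_0}$ from the previous step yields the first displayed estimate; substituting the difference bound $\varrho\ll h^{\beta-\epsilon_0}$ (applied with base point $e^{-u+i(\theta-h)}$, which is legitimate since $h\le\min\{|\theta|,u\}$ keeps both points in $B_{\delta_0}(1)$) yields the second. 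The statement for $Q(z)=I-P(z)$ is immediate since $Q(z)-Q(1)=-(P(z)-P(1))$. For $v(z)$: since $P(z)$ has rank one, $P(z)=v(z)\otimes m(z)$ with the normalization $\langle v(z),1\rangle=1$, we recover $v(z)=P(z)\mathbf 1/\langle P(z)\mathbf 1,1\rangle$ (using $\mathbf 1=m\in\cB$), and the estimate for $v(z)-v(1)$ in $\cB_w$ follows from that for $P(z)-P(1)$ together with the continuity of the normalizing scalar $\langle P(z)\mathbf 1,1\rangle$, which is controlled using (H1)(i) to pair with the constant function and is bounded below near $z=1$.

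\medskip

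The main obstacle is purely a matter of bookkeeping rather than genuine difficulty: one must make sure that all constants in the Keller--Liverani theorem are uniform over the relevant neighborhood $B_{\delta_0}(1)$ of $1$ — in particular that the spectral gap of $R(1)$ persists for $R(z)$ with $z$ in that neighborhood, and that the leading eigenvalue stays simple and isolated — which is where (H5)(i) (the uniform doubling inequality) and (H1)(iv) (simple eigenvalue with the rest of the spectrum strictly inside the disk) are used. Since this is exactly the hypothesis set for which \cite{KellerLiverani99} was designed, the verification is routine, and the only quantitative work is the summation-by-parts estimate on $\|R(z)-R(1)\|_{\cB\to\cB_w}$ sketched above, which mirrors computations already carried out in the proof of Lemma \ref{lem:compone}.
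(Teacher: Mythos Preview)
Your proposal is correct and follows essentially the same route as the paper: bound $\|R(z)-R(1)\|_{\cB\to\cB_w}\ll|u-i\theta|^{\beta-\epsilon_0}$ via (H4)(ii) and a split-sum argument, then invoke \cite[Corollary~1]{KellerLiverani99} (noting that (H5)(i) supplies the uniform Lasota--Yorke input and that hypothesis~(4) there is redundant by \cite[Remark~6]{KellerLiverani99}) to obtain the $\eta(\beta-\epsilon_0)$ H\"older estimate for $P(z)$, whence $Q(z)$ is immediate and $v(z)=P(z)1/\langle P(z)1,1\rangle$ inherits the same bound. The paper's proof is structurally identical, differing only in that it cites the specific corollary and remark in \cite{KellerLiverani99} rather than describing the theorem's hypotheses abstractly.
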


\begin{proof}
By (H4)(ii) and arguing as in the proof of Lemma \ref{lem:compone}, we have
\begin{equation}\label{eq-contR}
\begin{split}
\|R(e^{-u+i\theta})&-R(1)\|_{\cB\to\cB_w}\leq \sum_k |1-e^{-k(u-i\theta)}|\|R_k\|_{\cB\to\cB_w}\\
&\leq C\int_0^{|u-i\theta|^{-1}} x^{-(\beta-\epsilon_0)} dx+C|u-i\theta|^{\beta-\epsilon_0}
\leq C_1|u-i\theta|^{\beta-\epsilon_0}.
\end{split}
\end{equation}
We can then apply~\cite[Corollary 1]{KellerLiverani99}. Indeed, (H5)(i) correspond to hypotheses (2,3) in \cite{KellerLiverani99}. The above estimate yields hypothesis (5) in \cite{KellerLiverani99}, while hypothesis (4) in \cite{KellerLiverani99}
is redundant in the present case as explained by ~\cite[Remark 6]{KellerLiverani99}.  

By~\cite[Corollary 1]{KellerLiverani99},  there exists $\delta_0>0$  such that for each $\eta\in(0,1)$ and for
for all $e^{-u+i\theta}\in B_{\delta_0}(1)$,\footnote{ In fact, working a bit more (see  \cite[Remark 5]{KellerLiverani99}) one has
\[
\|P(e^{-u+i\theta})-P\|_{\cB\to\cB_w}\leq C|u-i\theta|^{\beta-\epsilon_0}\ln|u-i\theta|^{-1}.
\]
}
\[
\|P(e^{-u+i\theta})-P\|_{\cB\to\cB_w}\leq C_\eta|u-i\theta|^{\eta(\beta-\epsilon_0)}.
\]
The estimate for $\|P(e^{-u+i\theta}))-P( e^{-u+i(\theta-h)})\|_{\cB\to\cB_w}$ follows similarly. The estimates for the family $Q(z)$ are an immediate consequence.
For completeness, we provide the argument for estimating  $\|v(e^{-u+i\theta})-v(1)\|_{\cB\to\cB_w}$. The estimate for $\|v(e^{-u+i\theta}))-v( e^{-u+i(\theta-h)})\|_{\cB\to\cB_w}$ follows similarly. Note that
\[
| m(e^{-u+i\theta})(1)-m(1)(1)|=|\langle P(e^{-u+i\theta})1-P(1)1,1\rangle|\leq C_\eta|u-i\theta|^{\eta(\beta-\epsilon_0)}.
\]
Thus,
\[
\begin{split}
\|v(e^{-u+i\theta})-v(1)\|_{\cB_w}&=\left\|\frac{1}{m(e^{-u+i\theta})(1)}P(e^{-u+i\theta})1-\frac{1}{m(1)(1)}P(1)1\right\|_{\cB_w}\\
&\leq C_\eta|u-i\theta|^{\eta(\beta-\epsilon_0)}.
\end{split}
\]
\end{proof}

We can now complete

\begin{pfof}{ Lemma~\ref{lemma-asymplambda2}}By equation~\eqref{eq-ev-Gou},
\[
 1-\lambda(e^{-u+i\theta})=
\sum_nz^n\langle \Id_{Y_n} [v(e^{-u+i\theta})-v(1)],1\rangle+\mu(1-e^{(-u+i\theta)\vf}).
\]
 By Lemma~\ref{cor-estKL}, for all $u,|\theta|<\delta_0$
we have $\|v(e^{-u+i\theta})-v(1)\|_{\cB\to\cB_w}
 \leq C_\eta|u-i\theta|^{\eta(\beta-\epsilon_0)}$ for any $\eta\in (0,1)$.
By (H2),  $\sum_nz^n\langle \Id_{Y_n} [v(e^{-u+i\theta})-v(1)],1\rangle\leq C \|v(e^{-u+i\theta})-v(1)\|_{\cB\to\cB_w}\sum_k|1-e^{-k(u-i\theta)}|\mu(Y_k)^\ggen$.
By assumption,  $\ggen\beta>\eps_0$. By  Lemma~\ref{lem:compone}, $\sum_k|1-e^{-k(u-i\theta)}|\mu(Y_k)^\ggen\leq C|u-i\theta|^{\ggen\beta-(1-\ggen)\ve}\ell(|u-i\theta|^{-1})$.

Putting these together, we obtain that as $u,\theta\to 0$,
$1-\lambda(e^{-u+i\theta})=d_0\ell(1/|u-i\theta|)(u-i\theta)^{\beta}+O(|u-i\theta|^{\eta(\beta-\epsilon_0)+\ggen\beta-(1-\ggen)\ve}\ell(1/|u-i\theta|))$.
The conclusion follows by taking $\eta$ close to $1$, $\ve$ small enough and recalling $\eps_1>\eps_0$.
\end{pfof}

  The result below below is
 an analogue of  Corollary~\ref{eq-asymp0} under (H4)(ii).

  \begin{cor}\label{cor-estTdifKL} Assume the setting of  Lemma~\ref{lemma-asymplambda2}. Choose $\delta_0>0$ such that  Lemma~\ref{cor-estKL} holds.
 Then, for all $u,|\theta|<\delta_0$, for any $\ggen$ such that $\ggen\beta>\eps_0$ and for any $\eps_1>\eps_0$,
\begin{equation*}
T(e^{-u+i\theta})=\Gamma(1-\beta)^{-1}\ell(1/|u-i\theta|)^{-1}(u-i\theta)^{-\beta} P+E,
\end{equation*}
 where $E$ is an bounded operator in $\cB$ such that $\|E\|_{\cB\to\cB_w}=O(|u-i\theta|^{-(1-\ggen)\beta-\epsilon_1})$.
 \end{cor}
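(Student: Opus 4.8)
The plan is to mimic the proof of Corollary~\ref{eq-asymp0}, substituting the weaker continuity estimates of Lemma~\ref{cor-estKL} for the $\cB$-norm convergence $\|P(z)-P\|_\cB\to0$ used there, and tracking how the resulting error propagates through the decomposition~\eqref{eq-T(z)}. Recall that for $z=e^{-u+i\theta}$ in $\bar\D\cap B_{\delta_0}(1)$, $z\neq1$, we have
\[
T(z)=(1-\lambda(z))^{-1}P+(1-\lambda(z))^{-1}(P(z)-P)+(I-R(z))^{-1}Q(z).
\]
The first term is handled by Lemma~\ref{lemma-asymplambda2}: since $1-\lambda(z)=\Gamma(1-\beta)\ell(1/|u-i\theta|)(u-i\theta)^\beta(1+O(|u-i\theta|^{\ggen\beta-\eps_1}))$, inverting gives $(1-\lambda(z))^{-1}=\Gamma(1-\beta)^{-1}\ell(1/|u-i\theta|)^{-1}(u-i\theta)^{-\beta}+O(|u-i\theta|^{-\beta+\ggen\beta-\eps_1}\ell(1/|u-i\theta|)^{-1})$, and the error here is certainly $O(|u-i\theta|^{-(1-\ggen)\beta-\eps_1})$ up to slowly varying factors absorbed by enlarging $\eps_1$ slightly (or one keeps $\eps_1>\eps_0$ and uses a Potter-bound argument as in Lemma~\ref{lem:compone} to swallow $\ell$).

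The second term is where the main work lies. By Lemma~\ref{cor-estKL}, $\|P(z)-P\|_{\cB\to\cB_w}\le C_\eta|u-i\theta|^{\eta(\beta-\eps_0)}$ for any $\eta\in(0,1)$, while $|1-\lambda(z)|^{-1}\ll |u-i\theta|^{-\beta}\ell(1/|u-i\theta|)^{-1}$. Hence
\[
\|(1-\lambda(z))^{-1}(P(z)-P)\|_{\cB\to\cB_w}\ll |u-i\theta|^{-\beta+\eta(\beta-\eps_0)}\ell(1/|u-i\theta|)^{-1}.
\]
One must then check that the exponent $-\beta+\eta(\beta-\eps_0)$ is $\le -(1-\ggen)\beta-\eps_1$, i.e. that $\eta(\beta-\eps_0)\ge \ggen\beta-\eps_1$, which holds by taking $\eta$ close enough to $1$ since $\eps_1>\eps_0$ and $\ggen\le1$ force $\ggen\beta-\eps_1<\beta-\eps_0$; the slowly varying factor $\ell^{-1}$ only helps and can be absorbed by a further infinitesimal adjustment of $\eps_1$. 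The third term is bounded in $\cB$ (hence in $\cB\to\cB_w$) uniformly on $\bar\D\cap B_\delta(1)\setminus\{1\}$ by (H1)(iv) and (H5)(i), exactly as in the proof of Corollary~\ref{eq-asymp0}, and is therefore absorbed into $E$.

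Collecting the three contributions, $E$ is a bounded operator on $\cB$ (each piece is, by (H5)(i) and the bounded-operator statements in Lemma~\ref{cor-estKL}) with $\|E\|_{\cB\to\cB_w}=O(|u-i\theta|^{-(1-\ggen)\beta-\eps_1})$. The only genuine subtlety—the step I expect to be the main obstacle—is the bookkeeping with the slowly varying function $\ell$: the errors come out multiplied by $\ell^{-1}$ or by $\ell$ against a better power of $|u-i\theta|$, and one has to invoke the Potter bounds (as in Lemma~\ref{lem:compone}) to argue that replacing one power of $|u-i\theta|$ by a slightly larger one (allowed since we have freedom in choosing $\eps_1>\eps_0$ and $\eta<1$) dominates any slowly varying factor. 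Once this is granted the estimate is immediate; I would state it cleanly by fixing $\eps_1>\eps_0$ at the outset, then choosing $\eta$ (and $\ve$ in the application of Lemma~\ref{lem:compone}) accordingly.
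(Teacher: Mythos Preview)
Your proposal is correct and follows essentially the same route as the paper: both use the decomposition~\eqref{eq-T(z)}, invoke Lemma~\ref{lemma-asymplambda2} for the leading term, Lemma~\ref{cor-estKL} to bound $(1-\lambda(z))^{-1}(P(z)-P)$ in $\|\cdot\|_{\cB\to\cB_w}$, and (H1)(iv) with (H5)(i) to control $(I-R(z))^{-1}Q(z)$. Your extra care with the slowly varying factor $\ell$ (via Potter bounds and the freedom in $\eta$, $\eps_1$) is more explicit than the paper's terse treatment, but not a different argument.
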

 
 \begin{proof}Recall~\eqref{eq-T(z)}.
By Lemma~\ref{cor-estKL}, $\|P(e^{-u+i\theta})-P\|_{\cB\to\cB_w}=O(|u-i\theta|^{\beta-\epsilon_1})$, for any $\eps_1>\eps_0$. Also, by (H1)(iv) and (H5)(i), there exists $C>0$ such that
$\|\left(I-R(z))Q(z)\right)^{-1}\|_{\cB}\le C$ for $z\in \bar\D\cap
B_\delta(1)$, where $\delta>0$ is such that $\lambda(z)$ is simple for $z\in \bar\D\cap
B_\delta(1)$. The conclusion follows from these estimates together with~\eqref{eq-T(z)} and  Lemma~\ref{lemma-asymplambda2}.~\end{proof}

\begin{rmk}\label{rmk-normw} From the above short  proof, we also have that under  (H1), (H2), (H3), (H4)(ii) and (H5)(i),
$\|T(e^{-u+i\theta})\|_{\cB}\ll \ell(1/|u-i\theta|)^{-1}(u-i\theta)^{-\beta}$, for all $u,|\theta|<\delta$,
 where $\delta>0$ is such that $\lambda(e^{-u+i\theta})$ is well defined for $e^{-u+i\theta}\in \bar\D\cap
B_\delta(1)$.
\end{rmk}

 The next result  provides explicit bounds
on the continuity of $T(z)$, for $z$ a neighborhood of $1$.

\begin{prop}\label{prop-estTdifKL} Assume (H1), (H3),  (H4)(ii) and (H5)(i). Assume (H2) with $\ggen\beta>\eps_0$.
Choose $\delta_0>0$ such that  Lemma~\ref{cor-estKL} holds.
Then the following hold for any $\eps_1>\eps_0$, for all $u,|\theta|<\delta_0$ and for all $h\leq \min\{|\theta|,u\}$.

\begin{itemize}
\item[i)]$|\lambda(e^{-u+i\theta})-\lambda(e^{-u+i(\theta-h)})|\ll
h^\beta\ell(1/h)+h^{\ggen\beta-\epsilon_1}|u-i\theta|^{\beta}\ell(1/|u-i\theta)$.

\item[ii)]Also,
\begin{align*}
\|T(e^{-u+i\theta})-T(e^{-u+i(\theta-h)})\|_{\cB\to\cB_w}&\ll
\ell(1/|u-i\theta|)^{-2}\ell(1/h)h^{\beta}|u-i\theta|^{-2\beta}\\
&+\ell(1/|u-i\theta|)^{-1}h^{\ggen\beta-\epsilon_1}|u-i\theta|^{-\beta}.
\end{align*}
\end{itemize}
\end{prop}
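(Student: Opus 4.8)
The plan is to derive both estimates by applying the same spectral decomposition~\eqref{eq-T(z)} used in Corollary~\ref{cor-estTdifKL}, but now keeping track of the difference between two nearby points $e^{-u+i\theta}$ and $e^{-u+i(\theta-h)}$, and combining the continuity bounds already established in Lemma~\ref{cor-estKL} (for $P(z)$, $Q(z)$, $v(z)$) with the continuity bound for $\lambda(z)$ that part (i) asserts. So the first step is to prove (i): starting from~\eqref{eq-ev-Gou}, write
\[
\lambda(e^{-u+i\theta})-\lambda(e^{-u+i(\theta-h)})=\bigl[\mu(e^{-u+i(\theta-h)\vf})-\mu(e^{-u+i\theta\vf})\bigr]+\Delta,
\]
where $\Delta$ collects the difference of the two sums $\sum_n z^n\langle\Id_{Y_n}[v(1)-v(z)],1\rangle$. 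For the first bracket I would invoke the second part of Lemma~\ref{lem:compone} (the increment estimate for $\sum_k|e^{-k(u-i(\theta+h))}-e^{-k(u-i\theta)}|\mu(Y_k)^\ggen$ with $\ggen=1$), which gives $O(h^\beta\ell(1/h))$. For $\Delta$ I would split it as
\[
\Delta=\sum_n(e^{-n(u-i\theta)}-e^{-n(u-i(\theta-h))})\langle\Id_{Y_n}[v(1)-v(e^{-u+i\theta})],1\rangle
+\sum_n(e^{-n(u-i(\theta-h))}-1)\langle\Id_{Y_n}[v(e^{-u+i(\theta-h)})-v(e^{-u+i\theta})],1\rangle,
\]
bound the first sum by (H2), Lemma~\ref{lem:compone} (increment part) and the size estimate $\|v(1)-v(e^{-u+i\theta})\|_{\cB\to\cB_w}\ll|u-i\theta|^{\eta(\beta-\eps_0)}$ from Lemma~\ref{cor-estKL}, and the second by (H2), Lemma~\ref{lem:compone} (the $|1-e^{-k(u-i\theta)}|$ part, giving the factor $|u-i\theta|^{\ggen\beta-(1-\ggen)\ve}$) and the increment estimate $\|v(e^{-u+i(\theta-h)})-v(e^{-u+i\theta})\|_{\cB\to\cB_w}\ll h^{\eta(\beta-\eps_0)}$ from Lemma~\ref{cor-estKL}. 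Taking $\eta$ close to $1$ and $\ve$ small, and absorbing the resulting exponents into $\eps_1>\eps_0$, yields (i) (the $h^\beta\ell(1/h)$ term being the dominant one from the tail, the mixed term $h^{\ggen\beta-\eps_1}|u-i\theta|^\beta\ell(1/|u-i\theta|)$ coming from the $\Delta$-contributions).

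For (ii), I would subtract~\eqref{eq-T(z)} evaluated at the two points and treat the three resulting groups of terms separately. The principal term is $(1-\lambda(z))^{-1}P$: here I would write
\[
(1-\lambda(e^{-u+i\theta}))^{-1}-(1-\lambda(e^{-u+i(\theta-h)}))^{-1}=\frac{\lambda(e^{-u+i\theta})-\lambda(e^{-u+i(\theta-h)})}{(1-\lambda(e^{-u+i\theta}))(1-\lambda(e^{-u+i(\theta-h)}))},
\]
use the asymptotic $|1-\lambda(e^{-u+i\theta})|\asymp|u-i\theta|^\beta\ell(1/|u-i\theta|)$ from Lemma~\ref{lemma-asymplambda2} for the denominator (noting $|u-i(\theta-h)|\asymp|u-i\theta|$ since $h\le|\theta|$, so $\ell$ is essentially unchanged by slow variation), and part (i) for the numerator; this produces exactly the two terms in the claimed bound, namely $\ell(1/|u-i\theta|)^{-2}\ell(1/h)h^\beta|u-i\theta|^{-2\beta}$ and $\ell(1/|u-i\theta|)^{-1}h^{\ggen\beta-\eps_1}|u-i\theta|^{-\beta}$. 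For the term $(1-\lambda(z))^{-1}(P(z)-P)$ I would split the increment as
\[
\bigl[(1-\lambda(e^{-u+i\theta}))^{-1}-(1-\lambda(e^{-u+i(\theta-h)}))^{-1}\bigr](P(e^{-u+i\theta})-P)+(1-\lambda(e^{-u+i(\theta-h)}))^{-1}\bigl(P(e^{-u+i\theta})-P(e^{-u+i(\theta-h)})\bigr),
\]
and bound the two pieces using, respectively, the numerator estimate just obtained together with $\|P(z)-P\|_{\cB\to\cB_w}\ll|u-i\theta|^{\beta-\eps_1}$, and $|1-\lambda|^{-1}\ll|u-i\theta|^{-\beta}\ell(1/|u-i\theta|)^{-1}$ together with $\|P(e^{-u+i\theta})-P(e^{-u+i(\theta-h)})\|_{\cB\to\cB_w}\ll h^{\beta-\eps_1}$ from Lemma~\ref{cor-estKL}; both contributions are dominated by the two terms already present. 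Finally, for $(I-R(z))^{-1}Q(z)$ I would write the increment as
\[
(I-R(z_1))^{-1}\bigl[(R(z_1)-R(z_2))(I-R(z_2))^{-1}Q(z_2)+Q(z_1)-Q(z_2)\bigr]+\bigl[(I-R(z_1))^{-1}-(I-R(z_1))^{-1}\bigr]\cdots
\]
— more cleanly, use the resolvent identity on $(I-R(z))^{-1}Q(z)$, bounded on $\cB$ by (H1)(iv)+(H5)(i), together with the increment bound $\|R(z_1)-R(z_2)\|_{\cB\to\cB_w}\ll h^{\beta-\eps_0}$ (from the increment analogue of~\eqref{eq-contR}) and $\|Q(z_1)-Q(z_2)\|_{\cB\to\cB_w}\ll h^{\beta-\eps_1}$; this term is $O(h^{\beta-\eps_1})$, again negligible against the stated bound since $|u-i\theta|^{-\beta}\ge1$.

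The routine bookkeeping — keeping track of which norm ($\cB\to\cB$ versus $\cB\to\cB_w$) each factor lives in, and checking that the $(I-R(z))^{-1}Q(z)$ and $P(z)-P$ contributions really are absorbed — is tedious but mechanical. The genuine obstacle is part (i), specifically controlling $\Delta$: one must split the difference of the two renewal-type sums so that each piece pairs an \emph{increment} of one factor ($v(z)$ or the geometric weights $e^{-n(u-i\theta)}$) with a \emph{size} bound of the other, and then feed each pairing into the correct half of Lemma~\ref{lem:compone}. Getting the exponents to close — i.e. verifying that $\eta(\beta-\eps_0)+\ggen\beta-(1-\ggen)\ve$ exceeds $\ggen\beta-\eps_1$ for $\eta$ near $1$ and $\ve$ near $0$, so that the mixed error term can be written with a single $\eps_1$ — is where one has to be careful, though no new idea beyond what was used in Lemma~\ref{lemma-asymplambda2} is required.
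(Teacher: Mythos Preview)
Your strategy for (i) and the principal term of (ii) matches the paper's almost exactly: the same splitting of the $\lambda$-increment via~\eqref{eq-ev-Gou} into a tail piece plus two mixed sums (one pairing the increment of $v$ with $|z^n-1|$, the other pairing the increment of $z^n$ with $\|v(z)-v(1)\|$), and the same use of Lemmas~\ref{lem:compone} and~\ref{cor-estKL} with $\eta$ close to $1$. Your treatment of the $(1-\lambda(z))^{-1}P$ and $(1-\lambda(z))^{-1}(P(z)-P)$ pieces in (ii) is also what the paper does.

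The gap is in the last term, the increment of $(I-R(z))^{-1}Q(z)$. You write the resolvent identity and then claim the result is $O(h^{\beta-\eps_1})$ by combining $\|(I-R(z))^{-1}Q(z)\|_{\cB}\le C$ with $\|R(z_1)-R(z_2)\|_{\cB\to\cB_w}$ and $\|Q(z_1)-Q(z_2)\|_{\cB\to\cB_w}$ small. But the norms do not chain: after applying $R(z_1)-R(z_2)$ or $Q(z_1)-Q(z_2)$ you are in $\cB_w$, and the outer factor $(I-R(z_1))^{-1}$ (or even $(I-R(z_1)Q(z_1))^{-1}$) is \emph{not} uniformly bounded on $\cB_w$ near $z=1$ --- (H5)(i) only gives $\|R(z)^n\|_{\cB_w}\le C|z|^n$, which yields no contraction. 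The paper handles this via the Keller--Liverani interpolation: writing $(I-R(z)Q(z))^{-1}A=\sum_{k=0}^{n-1}(R(z)Q(z))^kA+(R(z)Q(z))^n(I-R(z)Q(z))^{-1}A$, bounding the first sum in $\cB_w$ (giving $nC\|A\|_{\cB_w}$) and the tail in $\cB$ (giving $C\sigma^n\|A\|_{\cB}$), and then choosing $n\sim\log(1/h)$. This is precisely the content of~\eqref{eq-KLcalc} in the paper. It is not a new idea --- it is the same mechanism that underlies Lemma~\ref{cor-estKL} --- but it is not ``mechanical bookkeeping'' either, and without it your argument for the $Q$-term does not close.
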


\begin{proof} [i)]
Put $\Delta_\lambda=\lambda(e^{-u+i\theta})-\lambda(e^{-u+i(\theta-h)})$.
By~\eqref{eq-ev-Gou},
\begin{align*}
\Delta_\lambda& =\mu(e^{(-u+i\theta)\vf}-e^{(-u+i(\theta-h))\vf})+\sum_n (e^{(-u+i\theta)n}-1)\langle \Id_{Y_n}[v(e^{-u+i\theta})-v(e^{-u+i(\theta-h)})],1\rangle\\
&+\sum_n (e^{(-u+i\theta)n}-e^{(-u+i(\theta-h))n})\langle \Id_{Y_n}[v(e^{-u+i(\theta-h)})-v(1)],1\rangle
\end{align*}

Note that
$\mu(e^{(-u+i\theta)\vf}-e^{(-u+i(\theta-h))\vf})=\int_0^\infty e^{-(u-i\theta)x}(e^{ihx}-1)dG(x)$,
where $G(x)=\mu(\varphi\leq x)$.
Under (H3), the estimate $|\int_0^\infty e^{-(u-i\theta)x}(e^{ihx}-1)dG(x)|\ll h^\beta\ell(1/h)$ follows by the argument
used in the proof of~\cite[Lemma 3.3.2]{GL}.

Next, by the argument used in the proof of  Lemma~\ref{lemma-asymplambda2}, which relies on Lemma~\ref{cor-estKL} and Lemma~\ref{lem:compone}
with  $\ggen\beta>\eps_0$, and
using the fact that $h\leq \min\{|\theta|,u\}$, we obtain that for any $\eps_1>\eps_0$,
\begin{align*}
|\sum_n (e^{(-u+i\theta)n}-1)\langle \Id_{Y_n}[v(e^{-u+i\theta})-v(e^{-u+i(\theta-h)})],1\rangle|&\ll h^{\beta-\eps_1}|u-i\theta|^{\ggen\beta-\eps_1}\ell(1/|u-i\theta|)\\
&\ll  h^{\ggen\beta-\eps_1} |u-i\theta|^{\beta-\epsilon_1} \ell(1/|u-i\theta|).
\end{align*}
Proceeding similarly,
\[
|\sum_n (e^{(-u+i\theta)n}-e^{(-u+i(\theta-h))n})\langle \Id_{Y_n}[v(e^{-u+i(\theta-h)})-v(1)],1\rangle|\ll h^{\ggen\beta-\eps_1}\ell (1/h) |u-i\theta|^{\beta-\epsilon_1}.
\]
Since $\eps_1>\eps_0$ is arbitrary, item i) follows by putting together the above estimates.

[ii)] In the first part, we proceed as in the proof of~\cite[Corollary 6.2]{Terhesiu12} and adapt the last part of the mentioned proof to deal
with the complication that here
we require estimates in $\|.\|_{\cB\to\cB_w}$.

Let $\Delta_T=T(e^{u}e^{i\theta})-T(e^{-u}e^{i(\theta-h)})$. Set
\[
\Delta_{\lambda,P}=(1-\lambda(e^{-u}e^{i\theta}))^{-1}P(e^{-u}e^{i\theta})-(1-\lambda(e^{
-u}e^{i(\theta-h)}))^{-1}P(e^{-u}e^{i(\theta-h)})
\] and
 \[
\Delta_{Q}=(I-R(e^{-u}e^{i\theta})Q(e^{-u}e^{i\theta}))^{-1}Q(e^{-u}e^{i\theta})-(I-R(e^{-u}e^{
i(\theta-h)})Q(e^{-u}e^{i(\theta-h)})^{-1}Q(e^{-u}e^{i(\theta-h)}).
\]
By~\eqref{eq-T(z)} and the fact that $(I-R(z))^{-1}Q(z)=(I-R(z)Q(z))^{-1}Q(z)$,  $\Delta_T= \Delta_{\lambda,P}+ \Delta_{Q}$.
Next,
\begin{align*}
\|\Delta_{\lambda,P}\|_{\cB\to\cB_w} &\ll
\|(1-\lambda(e^{-u}e^{i\theta}))^{-1}(P(e^{-u}e^{i\theta})-P(e^{-u}e^{i(\theta-h)}))\|_{\cB\to\cB_w}\\
&+\|P(e^{-u}e^{i(\theta-h)})\Big((1-\lambda(e^{-u}e^{i\theta}))^{-1}-(1-\lambda(e^{-u}e^{
i(\theta-h)}))^{-1}\Big)\|_{\cB\to\cB_w}.
\end{align*}
By  Lemma~\ref{lemma-asymplambda2}, for $u,|\theta|<\delta_0$ we have  $|(1-\lambda(e^{-u+i\theta}))^{-1}|\ll
\ell(1/|u-i\theta|)^{-1}|u-i\theta|^{-\beta}$. This together with
Lemma~\ref{cor-estKL} yields
$\|(1-\lambda(e^{-u}e^{i\theta}))^{-1}(P(e^{-u}e^{i\theta})-P(e^{-u}e^{i(\theta-h)}))\|_{\cB\to\cB_w}\ll
\ell(1/|u-i\theta|)^{-1}h^{\beta-\epsilon_1}|u-i\theta|^{-\beta}$. By (i) of the present Proposition and  Lemma~\ref{lemma-asymplambda2},
\begin{align*}
\bigg\|\frac{P(e^{-u}e^{i(\theta-h)})(\lambda(e^{-u}e^{
i(\theta-h)})-\lambda(e^{-u}e^{i\theta}))}{((1-\lambda(e^{-u}e^{i\theta}))(1-\lambda(e^{-u}e^{
i(\theta-h)})} &\bigg\|_{\cB\to\cB_w}\ll
\ell(1/|u-i\theta|)^{-2}\ell(1/h)h^{\beta}|u-i\theta|^{-2\beta}\\
&+\ell(1/|u-i\theta|)^{-1}h^{\ggen\beta-\epsilon_1}|u-i\theta|^{-\beta}.
\end{align*}
 Thus,
\begin{align*}
\|\Delta_{\lambda,P}\|_{\cB\to\cB_w}&\ll
\ell(1/|u-i\theta|)^{-1}h^{\ggen\beta-\epsilon_1}|u-i\theta|^{-\beta}
+\ell(1/|u-i\theta|)^{-2}\ell(1/h)h^{\beta}|u-i\theta|^{-2\beta}.
\end{align*}
To estimate $\|\Delta_{Q}\|_{\cB\to\cB_w}$, we compute that
\begin{align*}
\|\Delta_{Q}\|_{\cB\to\cB_w}\ll \| D(u,\theta,h)\|_{\cB\to\cB_w}+\| E(u,\theta,h)\|_{\cB\to\cB_w}
\end{align*}
where
 $D(u,\theta,h)=(I-R(e^{-u}e^{i\theta})Q(e^{-u}e^{i\theta}))^{-1}F(u,\theta,h)$
and
$E(u,\theta,h)=(I-R(e^{-u}e^{i\theta})Q(e^{-u}e^{i\theta}))^{-1}G(u,\theta,h)$
with
\begin{align*}
F(u,\theta,h)=(Q(e^{-u}e^{i\theta})-Q(e^{-u}e^{i(\theta-h)}))
=(Q(e^{-u}e^{i\theta})-Q(e^{-u}e^{i(\theta-h)}))(Q(e^{-u}e^{i\theta})+Q(e^{-u}e^{i(\theta-h)})).
\end{align*}
and
\begin{align*}
G(u,\theta,h) =[(RQ)(e^{-u}e^{i\theta})-(RQ)(e^{-u}e^{i\theta-h})]
 (I-R(e^{-u}e^{i(\theta-h)})Q(e^{-u}e^{i(\theta-h)}))^{-1}Q(e^{-u}e^{i(\theta-h)})
\end{align*}

By (H1)(iv) and (H5)(i), there exists  $C>0$ such that for $z\in\bar\D\setminus B_{\delta_0}(1)$,
$\|(I-R(z)Q(z))^{-1}\|_{\cB}\leq C$.
So, $\max\{\|F(u,\theta,h)\|_\cB,\, \|G(u,\theta,h)\|_\cB\}\leq C$ for some constant $C>0$. This together with Lemma~\ref{cor-estKL}
implies that for any $\eps_1>\eps_0$, there exists $ C_{\eps_1}>0$ such that
\begin{equation}\label{eq-FG}
\max\{\| F(u,\theta,h)\|_{\cB_w},\, \|G(u,\theta,h)\|_{\cB_w}\}\leq C_{\eps_1} h^{\beta-\eps_1}
\end{equation}

Recall that $\|(R(z)Q(z))^{n}\|_{\cB}\leq C\sigma^n$ and $\|(R(z)Q(z))^{n}\|_{\cB_w}\leq C$.
Let $A(z)$, $z\in\bar\D\cap B_{\delta_0}(1)$ be a family of operators
well defined in both spaces $\cB$ and $\cB_w$.
Using arguments in smiler to the ones in~\cite{KellerLiverani99}, these inequalities imply that for any $v\in\cB$, 
\begin{equation}\label{eq-KLcalc}
\begin{split}
\|(I -R(z)Q(z))^{-1}A(z)v\|_{\cB_w}&\leq\sum_{k=0}^{n-1}\|(R(z)Q(z))^k A(z)v\|_{\cB_w}\\
&\quad+\|(R(z)Q(z))^n(I-R(z)Q(z))^{-1} A(z)v\|_{\cB}\\
 &\leq n C\|A(z)v\|_{\cB_w}+C\sigma^n\|A(z)v\|_{\cB},
\end{split}
\end{equation}

Taking $n=[(\beta-\eps_0)\log(h)\log(1/\sigma)$] (so $\sigma^{n}\ll h^{\beta-\eps_0}$ and $n\ll \log (1/h)$), applying the above inequality to $A=F,\, G$ and using ~\eqref{eq-FG}, we obtain
that for any $\eps_1>\eps_0$
\[
\|\Delta_{Q}\|_{\cB\to\cB_w} \ll h^{\beta-\epsilon_0}\log(1/h)\ll
 h^{\beta-\epsilon_1}.
\] 
Item ii) follows by putting together the estimates for $\|\Delta_{\lambda,P}\|_{\cB\to\cB_w}$
and $\|\Delta_{Q}\|_{\cB\to\cB_w}$.~\end{proof}

\begin{rmk}In the case $\beta=1$,
the scheme above can also be combined with the arguments in~\cite{MT}, providing the desired asymptotics of $T(z)$,
$z\in\mathcal{S}^1$ and as such, first and higher order theory for the coefficients $T_n$ of $T(z)$,  $z\in\bar\D$.
To simplify the exposition in what follows we omit the case $\beta=1$.
\end{rmk}

The bounds provided by Proposition~\ref{prop-estTdifKL} do not allow one to apply
directly the argument of~\cite{MT} to estimate of the coefficients of $T(z)$, $z\in\bar\D$:
the arguments in~\cite{MT} require that
$\|T(e^{-u+i\theta})-T(e^{-u+i(\theta-h)})\|_{\cB\to\cB_w}\ll\ell(1/|u-i\theta|)^{-2}
\ell(1/h)h^{\beta}|u-i\theta|^{-2\beta}$. However, as explained in Section~\ref{sec-FO-Tn}, the modified version of
these arguments in~\cite{Terhesiu12} applies. In this sense, we note that

\begin{prop}\label{prop-derivR}Assume (H4)(ii). Write $z=e^{-(u+i\theta)}$. Then for all $u>0$,
\[
\left\|\frac{d}{d\theta}(R(z))\right\|_{\cB\to\cB_w}\ll u^{\beta-\epsilon_0-1}.
\]
\end{prop}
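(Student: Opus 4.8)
The plan is to differentiate the series $R(z) = \sum_{n=1}^\infty R_n z^n$ term by term with respect to $\theta$, where $z = e^{-(u+i\theta)}$, and then control the resulting series in the $\cB\to\cB_w$ operator norm using hypothesis (H4)(ii). First I would note that since $z = e^{-(u+i\theta)}$, we have $\frac{d}{d\theta} z^n = -i n z^n$, so formally
\[
\frac{d}{d\theta}R(z) = -i\sum_{n=1}^\infty n\, R_n\, z^n,
\]
and this differentiation is legitimate on $\D$ (i.e. for $u>0$) because, by (H4)(ii), $\|R_n\|_{\cB\to\cB_w}\ll c_n$ with $\sum_{j>n}c_j \ll n^{-(\beta-\epsilon_0)}$, so the differentiated series converges absolutely in $\cB\to\cB_w$ for every fixed $u>0$ (since $|z^n| = e^{-nu}$ decays exponentially).

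Next I would estimate
\[
\left\|\frac{d}{d\theta}R(z)\right\|_{\cB\to\cB_w} \le \sum_{n=1}^\infty n\, e^{-nu}\, \|R_n\|_{\cB\to\cB_w} \ll \sum_{n=1}^\infty n\, e^{-nu}\, c_n.
\]
To bound the right-hand side, I would use Abel summation (summation by parts) to pass from $c_n$ to the tail sums $C_n := \sum_{j>n} c_j \ll n^{-(\beta-\epsilon_0)}$, exactly in the spirit of the tail estimates already used in the proof of Lemma~\ref{lem:compone} and in~\eqref{eq-contR}. Writing $c_n = C_{n-1} - C_n$ and rearranging, the sum $\sum_n n e^{-nu} c_n$ is comparable to $\sum_n C_n \frac{d}{dn}(n e^{-nu})$-type terms, i.e. to $\sum_n n^{-(\beta-\epsilon_0)}$ weighted against $e^{-nu}$ and its discrete derivative; comparing with the integral $\int_0^\infty x^{-(\beta-\epsilon_0)} e^{-xu}\,dx = \Gamma(1-\beta+\epsilon_0)\, u^{\beta-\epsilon_0-1}$ gives the claimed bound $u^{\beta-\epsilon_0-1}$.

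The main obstacle is purely bookkeeping: one must handle the boundary/low-order terms of the Abel summation (and the fact that (H4)(ii) controls tails $\sum_{j>n}c_j$ rather than $c_n$ itself) carefully enough to see that no spurious logarithmic or lower-order factor appears, and one must confirm the integral comparison is valid uniformly for small $u$ (which it is, since $\beta - \epsilon_0 \in (0,1)$ by the constraint $\epsilon_0 < \max\{2\beta-1, 1-\beta\}$ in (H4)(ii), so that $\Gamma(1-\beta+\epsilon_0)$ is finite and the integral genuinely diverges like $u^{\beta-\epsilon_0-1}$ as $u\to 0$). I would carry this out by splitting the sum at $n \approx u^{-1}$: for $n \le u^{-1}$ use $e^{-nu}\approx 1$ and $\sum_{n\le u^{-1}} n c_n \ll \sum_{n \le u^{-1}} C_n \ll \sum_{n\le u^{-1}} n^{-(\beta-\epsilon_0)} \ll u^{-(1-\beta+\epsilon_0)} = u^{\beta-\epsilon_0-1}$ after the Abel step, while for $n > u^{-1}$ the exponential decay $e^{-nu}$ dominates and the contribution is $O(u^{\beta-\epsilon_0-1})$ as well. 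This matches the structure of the estimate~\eqref{eq-contR} already in the paper, just with an extra factor of $n$ absorbed into one more power of $u^{-1}$.
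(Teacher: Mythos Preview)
Your approach is correct and is precisely the standard argument the paper defers to by citing \cite[Proposition 4.6]{Terhesiu12}: differentiate the series termwise to get $\frac{d}{d\theta}R(z)=-i\sum_n n R_n z^n$, bound $\|\cdot\|_{\cB\to\cB_w}$ by $\sum_n n e^{-nu}c_n$, and reduce to the tails $C_n=\sum_{j>n}c_j\ll n^{-(\beta-\epsilon_0)}$ via Abel summation. One small point worth tightening in your write-up: for the range $n>u^{-1}$ the cleanest bound is simply $\sum_{n>u^{-1}} n e^{-nu}c_n\le \bigl(\sup_{n\ge1} n e^{-nu}\bigr)\sum_{n>u^{-1}}c_n\ll u^{-1}\cdot C_{\lfloor u^{-1}\rfloor}\ll u^{-1}\cdot u^{\beta-\epsilon_0}=u^{\beta-\epsilon_0-1}$, which avoids any further Abel step or integral comparison there; your phrase ``the exponential decay dominates'' is correct in spirit but would benefit from this one-line justification.
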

\begin{proof}The result follows by the argument used in the proof
of~\cite[Proposition 4.6]{Terhesiu12}.~\end{proof}

As a consequence we have
\begin{cor}\label{cor-deriv} Assume (H1), (H3),  (H4)(ii) and (H5). Write $z=e^{-(u+i\theta)}$.
Choose $\delta_0$ such that Lemma~\ref{cor-estKL} holds.
Let $\theta$ such that $|\theta|>\delta_0$.
Then for all $u>0$,
\[
\left\|\frac{d}{d\theta}(I-R(e^{-u}e^{i\theta}))^{-1}\right\|_{\cB\to\cB_w}\ll u^{\beta-\epsilon_0-1}\log(1/u).
\]
\end{cor}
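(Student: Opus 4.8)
The plan is to use the resolvent identity to reduce the derivative of $(I-R(z))^{-1}$ to the derivative of $R(z)$, for which Proposition~\ref{prop-derivR} already gives the bound $\|\tfrac{d}{d\theta}R(z)\|_{\cB\to\cB_w}\ll u^{\beta-\epsilon_0-1}$. Formally, differentiating $(I-R(z))(I-R(z))^{-1}=I$ in $\theta$ gives
\[
\frac{d}{d\theta}(I-R(z))^{-1}=(I-R(z))^{-1}\Bigl(\frac{d}{d\theta}R(z)\Bigr)(I-R(z))^{-1}.
\]
The issue is that this identity, read naively in $\cB\to\cB_w$, requires $(I-R(z))^{-1}$ to be bounded $\cB_w\to\cB_w$ on the left factor and $\cB\to\cB$ on the right factor. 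The right factor is fine: since $|\theta|>\delta_0$, we are in the region $z\in\bar\D\setminus B_{\delta_0}(1)$, where (H5)(ii) guarantees $1$ is not in the spectrum of $R(z):\cB\to\cB$, so $(I-R(z))^{-1}$ is uniformly bounded on $\cB$ there (by continuity of $z\mapsto(I-R(z))^{-1}$ on the compact set $\{|\theta|\ge\delta_0\}\cap\bar\D$, using (H1) and (H4)(ii)). The left factor is the delicate one, since $(I-R(z))^{-1}$ need not be bounded on $\cB_w$.

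To handle the left factor I would proceed exactly as in the proof of Proposition~\ref{prop-estTdifKL}(ii): use the Lasota--Yorke-type splitting of (H5)(i) to trade $\cB_w$-control for $\cB$-control at a logarithmic cost. Writing $w=(\tfrac{d}{d\theta}R(z))(I-R(z))^{-1}v$ for $v\in\cB$, one has $\|w\|_{\cB}\ll u^{\beta-\epsilon_0-1}\|v\|_{\cB}$ (Proposition~\ref{prop-derivR} also holds, or the analogous bound does, $\cB\to\cB$, up to the same power of $u$ — more precisely one uses $\|w\|_{\cB_w}\ll u^{\beta-\epsilon_0-1}\|v\|_{\cB}$ from Proposition~\ref{prop-derivR} together with a crude $\|w\|_{\cB}\ll \|v\|_{\cB}\cdot(\text{something})$), and then
\[
\|(I-R(z))^{-1}w\|_{\cB_w}\le\sum_{k=0}^{n-1}\|R(z)^k w\|_{\cB_w}+\|R(z)^n(I-R(z))^{-1}w\|_{\cB}
\le nC\|w\|_{\cB_w}+C\lambda^{-n}\|(I-R(z))^{-1}w\|_{\cB}+\dots,
\]
using (H5)(i) (here $|z|=e^{-u}\le1$, so the $|z|^n$ factors only help). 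Choosing $n\asymp \log(1/u)$ so that $\lambda^{-n}\asymp u$ swallows the leftover $\cB$-term, one is left with $nC\|w\|_{\cB_w}\ll \log(1/u)\,u^{\beta-\epsilon_0-1}\|v\|_{\cB}$, which is the claimed bound.

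The main obstacle is bookkeeping the norms correctly: one must verify that $\tfrac{d}{d\theta}R(z)$ maps $\cB\to\cB_w$ with the Proposition~\ref{prop-derivR} bound \emph{and} that the composition $(I-R(z))^{-1}$ on the right keeps things in $\cB$ with a $u$-independent bound (this is where (H5)(ii) and a compactness argument over $|\theta|\ge\delta_0$ enter), so that the (H5)(i) splitting can then be applied to the single remaining left-hand $(I-R(z))^{-1}$. The logarithmic loss is genuine and expected (compare the footnote in Lemma~\ref{cor-estKL} and the $\log(1/h)$ losses in Proposition~\ref{prop-estTdifKL}); there is no way around it with only a Lasota--Yorke inequality rather than a genuine spectral gap on $\cB_w$. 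Everything else is a routine repetition of the argument already carried out for Proposition~\ref{prop-estTdifKL}(ii), so I would keep the write-up short and simply cite that proof and Proposition~\ref{prop-derivR}.
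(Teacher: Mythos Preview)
Your approach is essentially identical to the paper's: the resolvent identity, uniform boundedness of $(I-R(z))^{-1}$ on $\cB$ for $|\theta|>\delta_0$ via (H5), the Keller--Liverani splitting \eqref{eq-KLcalc} for the left factor, and Proposition~\ref{prop-derivR} for $\|w\|_{\cB_w}$ are exactly what the paper does. The one point you leave as ``a crude $\|w\|_{\cB}\ll\|v\|_{\cB}\cdot(\text{something})$'' is made explicit there: from $\|R_n\|_{\cB}\le C$ one gets $\|\tfrac{d}{d\theta}R(z)\|_{\cB\to\cB}\ll\sum_n n e^{-nu}\ll u^{-2}$, so the $\cB$-term is of order $\lambda^{-n}u^{-2}$ and one must take $n$ so that $\lambda^{-n}\ll u^3$ (still $n\asymp\log(1/u)$, just with a larger constant than your ``$\lambda^{-n}\asymp u$'' would suggest) to make that term $O(u)$ and hence negligible.
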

\begin{proof}By (H5) there exists $\delta>0$ and some constant $C>0$ such that
$\|(I-R(e^{-u}e^{i\theta}))^{-1}\|_{\cB}\leq C$, for all $u>0$ and $|\theta|>\delta$. This together with
Remark~\ref{rmk-normw} ensures that $\|(I-R(e^{-u}e^{i\theta}))^{-1}\|_{\cB}\leq C$, for all $u>0$ and $|\theta|>\delta_0$. 

Note that $\frac{d}{d\theta}(I-R(z))^{-1}=(I-R(z))^{-1}\frac{d}{d\theta}R(z)(I-R(z))^{-1}$.
Let $h\in\cB$. Proceeding as in~\eqref{eq-KLcalc} and using (H5) (i), we obtain that there exists
some constant $C>0$ such that
\[
\|\frac{d}{d\theta}(I-R(z))^{-1}h\|_{\cB_w}\leq n C\|\frac{d}{d\theta}R(z)(I-R(z))^{-1}h\|_{\cB_w}+C\lambda^{-n}\| \frac{d}{d\theta}R(z)(I-R(z))^{-1}h\|_{\cB}.
\]
Using the fact that $\|R_n\|_\cB\leq C$, for all $n\geq 1$ and some constant $C>0$, it is easy to check that $\left\|\frac{d}{d\theta}(R(z))\right\|_{\cB\to\cB_w}\ll u^{-2}$.
Take $n=[3\log(u)\log(1/\lambda)^{-1}$]; with this choice so $\lambda^{-n}\ll u^3$ and $n\ll \log(1/u)$. Hence, $\lambda^{-n}\| \frac{d}{d\theta}R(z)(I-R(z))^{-1}h\|_{\cB}\ll u$
and by Proposition~\ref{prop-derivR}, $\|\frac{d}{d\theta}R(z)(I-R(z))^{-1}h\|_{\cB_w}\ll u^{\beta-\epsilon_0-1}\log(1/u)$.
The conclusion follows from these estimates and the last displayed inequality.~\end{proof}

\section{First order asymptotic of $T_n$: mixing.}
\label{sec-FO-Tn}

Given the asymptotic behaviors of  $T(z):\cB\to\cB_w$ for $z\in \bar\D\cap B_\delta(1)$ and of
$\frac{d}{d\theta}(T(z))$ for $z\in \bar\D\setminus B_\delta(1)$ described  in Section~\ref{sec-FO_Tz},
the arguments used in~\cite{Terhesiu12}(a modified version of~\cite{MT}) for estimating  the coefficients $T_n$
of $T(z)$,  $z\in\bar\D$ apply. We briefly recall the main steps.

By, for instance, the argument of~\cite[Corollary 4.2]{MT},

\begin{lemma}\label{lemma-id} Let $A(z)$ be a function from $\bar\D$ to some Banach space $\widetilde \cB$, continuous on $\bar\D\setminus \{1\}$
and analytic  on $\D$.
  For $u\geq 0$, $\theta\in [-\pi,\pi)$, write $z=e^{-u+i\theta}$. Assume that
as $z\to 1$,
\[
|A(e^{-u+i\theta})|\ll|A(e^{i\theta})|\ll|\theta|^{-\gamma},
\]
for some $\gamma\in (0,1)$. Then the Fourier coefficients $A_n$ coincide with the Taylor coefficients $\hat{A}_n $, that is
\[
A_n=\hat{A}_n=\frac{1}{2\pi}\int_{-\pi}^{\pi}A(e^{i\theta}) e^{-in\theta}\,d\theta
\]
\end{lemma}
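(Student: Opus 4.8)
The plan is to exploit the standard relationship between the Taylor coefficients of a power series analytic on $\D$ and the Fourier coefficients of its boundary values, the only subtlety being that $A(z)$ is not assumed continuous at $z=1$, merely that it does not blow up faster than $|\theta|^{-\gamma}$ with $\gamma<1$ (so the singularity is integrable). First I would fix $n\geq 0$ and, for $0<r<1$, use Cauchy's formula on the circle $|z|=r$, which gives
\[
\hat A_n = \frac{1}{2\pi}\int_{-\pi}^{\pi} A(re^{i\theta}) r^{-n} e^{-in\theta}\,d\theta,
\]
valid for every $r\in(0,1)$ by analyticity on $\D$ (this is just the identification of Taylor coefficients with the contour integral). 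The goal is to let $r\uparrow 1$ and pass the limit inside the integral.

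The key step is a dominated-convergence argument on $[-\pi,\pi)$. For each fixed $\theta\neq 0$, continuity of $A$ on $\bar\D\setminus\{1\}$ gives $A(re^{i\theta})\to A(e^{i\theta})$ as $r\uparrow1$, and trivially $r^{-n}e^{-in\theta}\to e^{-in\theta}$; so the integrand converges pointwise a.e.\ on $[-\pi,\pi)$ to $A(e^{i\theta})e^{-in\theta}$. For the domination, I would use the hypothesis $\|A(e^{-u+i\theta})\|_{\widetilde\cB}\ll|\theta|^{-\gamma}$ with $u=-\log r\geq 0$: writing $z=re^{i\theta}=e^{-u+i\theta}$ we get $\|A(re^{i\theta})r^{-n}e^{-in\theta}\|_{\widetilde\cB}\ll r^{-n}|\theta|^{-\gamma}$, which for $r$ bounded away from $0$ (say $r\geq 1/2$) is bounded by a constant multiple of $|\theta|^{-\gamma}$, an integrable function on $[-\pi,\pi)$ since $\gamma<1$. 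Hence by the dominated convergence theorem (in the Banach space $\widetilde\cB$, using that $\|\cdot\|_{\widetilde\cB}$ is dominated by an integrable scalar function) the limit passes inside, yielding
\[
\hat A_n=\lim_{r\uparrow1}\frac{1}{2\pi}\int_{-\pi}^{\pi}A(re^{i\theta})r^{-n}e^{-in\theta}\,d\theta=\frac{1}{2\pi}\int_{-\pi}^{\pi}A(e^{i\theta})e^{-in\theta}\,d\theta=A_n.
\]

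One small point I would be careful about: the bound $|A(e^{-u+i\theta})|\ll|A(e^{i\theta})|\ll|\theta|^{-\gamma}$ as stated is an asymptotic as $z\to1$, i.e.\ it controls the integrand only in a neighbourhood $|\theta|<\delta$; away from $\theta=0$, continuity of $A$ on the compact set $\{re^{i\theta}: 1/2\leq r\leq 1,\ \delta\leq|\theta|\leq\pi\}$ gives a uniform bound there, so the domination by (const)$\cdot(|\theta|^{-\gamma}+1)\in L^1$ holds on all of $[-\pi,\pi)$. The main (and really only) obstacle is this interchange of limit and integral near the singularity at $\theta=0$; everything else is the textbook identity between Taylor and Fourier coefficients. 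I would also remark that the argument is purely scalar once one takes norms, so it applies verbatim with $\widetilde\cB$ an arbitrary Banach space, which is exactly the generality the lemma is stated in and the form in which it will be applied (with $\widetilde\cB$ being $\cB$, $\cB_w$, or spaces of bounded operators between them).
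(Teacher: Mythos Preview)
Your argument is correct and complete: Cauchy's formula on $|z|=r$ followed by dominated convergence as $r\uparrow 1$, with the integrable majorant $|\theta|^{-\gamma}$ near $\theta=0$ and compactness away from it, is exactly the right mechanism, and you handle the Banach-valued case and the localisation of the asymptotic bound carefully. The paper itself does not write out a proof at all---it simply cites \cite[Corollary~4.2]{MT} for the argument---so your proposal in fact supplies what the paper omits, and the reasoning you give is the standard one that the citation is pointing to.
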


\begin{cor}\label{rem-TaylorForurier-scalar}Assume (H1), (H2),  (H3), (H4)(ii) and (H5). Then,
the Taylor coefficients of $T(z):\cB\to\cB_w$, $z\in\D$ coincide with the Fourier coefficients
of  $T(z):\cB\to\cB_w$, $z\in\mathcal{S}^1$.
\end{cor}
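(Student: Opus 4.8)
The plan is to invoke Lemma~\ref{lemma-id} with $\widetilde\cB$ taken to be the Banach space $\mathcal{L}(\cB,\cB_w)$ of bounded linear operators from $\cB$ to $\cB_w$, and $A(z)=T(z)$ regarded as a $\widetilde\cB$-valued function on $\bar\D$. The hypotheses of that lemma are: (a) $A$ is continuous on $\bar\D\setminus\{1\}$ and analytic on $\D$; (b) a uniform radial bound $|A(e^{-u+i\theta})|\ll|A(e^{i\theta})|\ll|\theta|^{-\gamma}$ for some $\gamma\in(0,1)$. So the proof is just a matter of checking these two items using the estimates already established in Section~\ref{sec-FO_Tz}, and then reading off the conclusion $T_n=\hat T_n=\frac1{2\pi}\int_{-\pi}^\pi T(e^{i\theta})e^{-in\theta}\,d\theta$.

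For item (a): analyticity of $z\mapsto T(z)$ on $\D$ as a family of bounded operators $\cB\to\cB_w$ was noted right after the statement of (H5) (it follows from the renewal equation $T(z)=(I-R(z))^{-1}$ together with (H5)(i) and (H4)(ii)), and continuity on $\bar\D\setminus\{1\}$ follows from (H5)(ii) (which guarantees $1\notin\mathrm{spec}\,R(z)$ for $z\in\bar\D\setminus\{1\}$, so $(I-R(z))^{-1}$ exists and depends continuously on $z$ there). For item (b) I split $\bar\D$ into the region near $1$ and its complement. Near $1$, i.e.\ for $e^{-u+i\theta}\in\bar\D\cap B_\delta(1)$, Remark~\ref{rmk-normw} (a by-product of Corollary~\ref{cor-estTdifKL}) gives $\|T(e^{-u+i\theta})\|_{\cB\to\cB_w}\ll\ell(1/|u-i\theta|)^{-1}|u-i\theta|^{-\beta}$, and by the Potter bounds this is $\ll|u-i\theta|^{-\beta-\epsilon}\ll|\theta|^{-\beta-\epsilon}$ for small $\epsilon$; since $\beta\in(1/2,1)$ one can choose $\epsilon$ so that $\gamma:=\beta+\epsilon\in(0,1)$, and in particular the bound on the radial segment $u>0$ is dominated by the bound at $u=0$. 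Away from $1$, i.e.\ for $|\theta|>\delta$ (or $|\theta|>\delta_0$), (H5)(ii) together with compactness of $\{e^{-u+i\theta}:u\ge0,\ \delta\le|\theta|\le\pi\}$ and continuity of $(I-R(z))^{-1}$ as an operator $\cB\to\cB$ (hence $\cB\to\cB_w$, using $\cB\hookrightarrow\cB_w$) gives a uniform bound $\|T(e^{-u+i\theta})\|_{\cB\to\cB_w}\le C$, which trivially satisfies the required $|\theta|^{-\gamma}$ bound on that range. Combining the two regions yields the hypothesis of Lemma~\ref{lemma-id}.

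Having verified (a) and (b), Lemma~\ref{lemma-id} applies verbatim and identifies the Taylor coefficients of $T(z):\cB\to\cB_w$ on $\D$ with the Fourier coefficients of $T(z):\cB\to\cB_w$ on $\mathcal S^1$, which is exactly the assertion of the corollary. I do not expect any genuine obstacle here: this is a bookkeeping argument assembling Remark~\ref{rmk-normw}, (H5)(ii), and Lemma~\ref{lemma-id}. The only mild subtlety worth stating carefully is the choice of $\gamma$: one must record that because $\beta<1$ one can pick $\epsilon>0$ (via Potter's bound on $\ell$) small enough that $\beta+\epsilon<1$, so that the exponent really does land in the interval $(0,1)$ required by Lemma~\ref{lemma-id}; and one should note that the estimate holds uniformly in $u\ge0$, not merely on the circle, which is what makes the radial domination hypothesis $|A(e^{-u+i\theta})|\ll|A(e^{i\theta})|$ legitimate. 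It is also worth remarking that the same argument shows the analogous statement for $T(z):\cB\to\cB$ under the stronger hypothesis (H4)(i) via Corollary~\ref{eq-asymp0} in place of Corollary~\ref{cor-estTdifKL}, though that is not what is being asserted here.
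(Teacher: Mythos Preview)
Your proof is correct and follows exactly the paper's approach: the paper's proof is the single line ``This is an immediate consequence of Lemma~\ref{lemma-id} and Corollary~\ref{cor-estTdifKL}'', and you have simply spelled out in detail how the hypotheses of Lemma~\ref{lemma-id} are verified from Corollary~\ref{cor-estTdifKL} (equivalently Remark~\ref{rmk-normw}) near $z=1$ and from (H5)(ii) away from $z=1$. The only cosmetic remark is that the bound ``as $z\to 1$'' in Lemma~\ref{lemma-id} makes your away-from-$1$ discussion superfluous for the growth estimate (it is needed only for continuity), but including it does no harm.
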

\begin{proof} This is an immediate consequence of Lemma~\ref{lemma-id} and Corollary~\ref{cor-estTdifKL}.~\end{proof}

 By Corollary~\ref{rem-TaylorForurier-scalar}, first and higher order of $T_n$  can be obtained by estimating
either the Fourier or Taylor coefficients of $T(z)$, $z\in\bar\D$.

\subsection{Mixing under (H4)(ii)}

\begin{thm} \label{lemma-FO-Tn-MT}
Assume (H1), (H3), (H4)(ii) and (H5). Assume (H2) with $\ggen\beta>\eps_0$ and
$\beta\in (1/2, 1)$. Let $d_0=[\Gamma(1-\beta)\Gamma(\beta)]^{-1}$.  Then, as $n\to\infty$,
\[
\sup_{v\in\cB, \|v\|_{\cB}=1}\|\ell(n)n^{1-\beta}T_n v-
d_0 Pv\|_{\cB_w}\to 0.
\]
\end{thm}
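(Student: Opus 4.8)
The plan is to extract the asymptotics of $T_n$ from the asymptotics of $T(z)$ near $z=1$ established in Corollary~\ref{cor-estTdifKL}, using the Fourier coefficient representation from Corollary~\ref{rem-TaylorForurier-scalar}. First I would write $T_n = \frac{1}{2\pi}\int_{-\pi}^{\pi} T(e^{i\theta}) e^{-in\theta}\, d\theta$ (as operators $\cB\to\cB_w$), split the integral into the region $|\theta|<\delta_0$, where Corollary~\ref{cor-estTdifKL} gives $T(e^{i\theta}) = \Gamma(1-\beta)^{-1}\ell(1/|\theta|)^{-1}(-i\theta)^{-\beta} P + E(\theta)$ with $\|E(\theta)\|_{\cB\to\cB_w} = O(|\theta|^{-(1-\ggen)\beta-\eps_1})$, and the region $|\theta|\ge\delta_0$. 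For the latter, an integration by parts using Corollary~\ref{cor-deriv} (which bounds $\|\frac{d}{d\theta}(I-R(e^{i\theta}))^{-1}\|_{\cB\to\cB_w}$, i.e. the $\theta$-derivative of $T$ away from $1$) gives a contribution that is $o(n^{\beta-1}/\ell(n))$ — in fact much smaller. This is the step where one must be slightly careful with the $u\to 0$ limit: one takes $z = e^{-u+i\theta}$, proves the estimate for $u>0$ with constants uniform in $u$, and passes to the boundary.

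Next I would handle the main term. The leading piece is $\frac{1}{2\pi}\Big(\int_{|\theta|<\delta_0}\Gamma(1-\beta)^{-1}\ell(1/|\theta|)^{-1}(-i\theta)^{-\beta} e^{-in\theta}\, d\theta\Big) P$. This is a classical scalar Fourier-integral computation: by the standard Tauberian/Fourier asymptotics for regularly varying functions (as in~\cite{GL} or~\cite[Lemma~3.3]{MT}), $\frac{1}{2\pi}\int_{-\pi}^{\pi}\Gamma(1-\beta)^{-1}\ell(1/|\theta|)^{-1}(-i\theta)^{-\beta} e^{-in\theta}\, d\theta \sim d_0\, \ell(n)^{-1} n^{\beta-1}$, where the constant works out to $d_0 = [\Gamma(1-\beta)\Gamma(\beta)]^{-1}$ precisely because $\frac{1}{2\pi}\int_{\bR}|\theta|^{-\beta}(\ldots)e^{-in\theta}d\theta$ produces a $\Gamma(\beta)^{-1}$ factor (truncating the integral to $[-\delta_0,\delta_0]$ only changes things by a term that is $O(n^{-N})$ for all $N$, since the integrand is smooth there). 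Multiplying by the fixed rank-one operator $P$ gives the asserted main term $d_0\ell(n)^{-1}n^{\beta-1}Pv$.

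Finally I would control the error term, i.e. show $\ell(n)n^{1-\beta}\big\|\frac{1}{2\pi}\int_{|\theta|<\delta_0} E(\theta) e^{-in\theta}\,d\theta\big\|_{\cB\to\cB_w}\to 0$. With only a pointwise bound $\|E(\theta)\|_{\cB\to\cB_w}\ll |\theta|^{-(1-\ggen)\beta-\eps_1}$ the naive estimate $\int_{|\theta|<\delta_0}|\theta|^{-(1-\ggen)\beta-\eps_1}\,d\theta$ is merely a finite constant (when $(1-\ggen)\beta+\eps_1<1$, which holds since $\eps_0<1-\beta\le 1$ and $\ggen\le 1$), so one does not gain decay in $n$ from integrability alone; this is the main obstacle. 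The remedy — exactly the point of Proposition~\ref{prop-estTdifKL} and the remark that the arguments of~\cite{Terhesiu12} (rather than~\cite{MT}) apply — is to use the modulus-of-continuity bound on $T(z)$: one splits $\{|\theta|<\delta_0\}$ into $\{|\theta|<1/n\}$ and dyadic annuli $\{2^j/n \le |\theta| < 2^{j+1}/n\}$, and on each annulus replaces $T(e^{i\theta})e^{-in\theta}$ by a difference $T(e^{i\theta})-T(e^{i(\theta-\pi/n)})$ (the standard trick $\int g(\theta)e^{-in\theta} = -\int g(\theta+\pi/n)e^{-in\theta}$, so $2\int g e^{-in\theta} = \int (g(\theta)-g(\theta+\pi/n))e^{-in\theta}$), and invokes Proposition~\ref{prop-estTdifKL}(ii) with $h\sim 1/n$ to bound the difference. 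Summing the resulting geometric-type series over $j$ and over the $|\theta|<1/n$ core, and using $\ggen\beta>\eps_0$ together with $\beta>1/2$ (needed so that the exponents combine to give decay, since the worst term is of order $n^{1-\beta}\cdot n^{-\beta}\cdot\ell$-factors, requiring $2\beta>1$), one gets that the error, after multiplication by $\ell(n)n^{1-\beta}$, is $o(1)$. The passage from the operator bound to the uniform statement $\sup_{\|v\|_\cB=1}$ is automatic since every estimate above is in operator norm $\cB\to\cB_w$.
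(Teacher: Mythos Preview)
Your overall strategy matches the paper's --- Fourier/Cauchy representation, integration by parts via Corollary~\ref{cor-deriv} away from $1$, half-period shift plus Proposition~\ref{prop-estTdifKL} near $1$ --- but there is a genuine gap in how you organize the near-$1$ contribution.

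You propose to split $T=M\cdot P+E$ on all of $|\theta|<\delta_0$, compute the scalar integral of $M$ exactly, and then show $\ell(n)n^{1-\beta}\int_{|\theta|<\delta_0}E\,e^{-in\theta}\,d\theta\to 0$ via a core $|\theta|<1/n$ and dyadic annuli with the half-period trick. The problem is that the only available modulus-of-continuity bound is Proposition~\ref{prop-estTdifKL}(ii), whose dominant term $\ell(1/|\theta|)^{-2}\ell(1/h)\,h^{\beta}|\theta|^{-2\beta}$ (with $h=\pi/n$) gives, after integrating over $(1/n,\delta_0)$,
\[
\ell(n)\,n^{-\beta}\int_{1/n}^{\delta_0}\ell(1/|\theta|)^{-2}|\theta|^{-2\beta}\,d\theta\;\asymp\;\ell(n)^{-1}n^{\beta-1},
\]
since $2\beta>1$ forces the integral to concentrate at the lower endpoint. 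Your ``geometric-type series'' over dyadic shells therefore sums to a \emph{constant} multiple of $\ell(n)^{-1}n^{\beta-1}$, i.e.\ $O(1)$ after multiplication by $\ell(n)n^{1-\beta}$, not $o(1)$. Subtracting the scalar increment $M(\theta)-M(\theta-h)\sim h|\theta|^{-\beta-1}/\ell$ does not help: for $|\theta|\gg 1/n$ this is dominated by the Proposition~\ref{prop-estTdifKL} bound, so the increment of $E$ inherits the same ``bad'' term.

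The paper avoids this by \emph{not} separating $M$ and $E$ globally. Instead it introduces a free parameter $b$ and works on the contour $|z|=e^{-1/n}$: on the core $|\theta|<b/n$ one evaluates $\int T\,e^{-in\theta}$ directly (this is where \cite[Proposition~4.6]{Terhesiu12} is invoked to produce the main term $d_0P$), while on the intermediate region $b/n<|\theta|<\delta_0$ the half-period shift on $T$ now yields a bound carrying the extra factor $b^{-(2\beta-1-\delta_1)}$. One then takes the double limit $n\to\infty$ followed by $b\to\infty$. Replacing your fixed core $|\theta|<1/n$ by $|\theta|<b/n$ with $b\to\infty$ is precisely the missing ingredient; with that change (and applying the half-period trick to $T$ itself on the intermediate region, not to $E$), your outline becomes the paper's proof.
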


\begin{rmk}The above result generalizes~\cite[Theorem 2.1]{MT} to the abstract class of transformations described
in Section~\ref{sec-oprenseq}.
\end{rmk}

\begin{proof}We argue as in~\cite[Proof of Theorem 3.3]{Terhesiu12}.

Let $\Gamma_{1/u}=\{e^{-u}e^{i\theta}:-\pi\leq \theta <\pi\}$
with $e^{-u}=e^{-1/n}$, $n\geq 1$.
Let $b\in (0,\delta_0 n)$, with $\delta_0$ as in Lemma~\ref{cor-estKL}.
Let $A=[-\pi, -\delta_0]\cup [\delta_0, \pi]$.

With the above specified, we proceed to estimate $T_n$.

\begin{align}\label{eq-Tn-int}
\nonumber T_n&=\frac{1}{2\pi}\int_{\Gamma_n} \frac{T(z)}{z^{n+1}} dz=\frac{e}{2\pi}\int_{-\pi}^{\pi}
T(e^{-1/n}e^{i\theta})e^{-in\theta}d\theta=
\frac{e}{2\pi}\Big(\int_{-b/n}^{b/n} T(e^{-1/n}e^{i\theta})e^{-in\theta}d\theta\\
\nonumber &+\int_{-\delta_0}^{-b/n}
T(e^{-1/n}e^{i\theta})e^{-in\theta}d\theta+\int_{b/n}^{\delta_0} T(e^{-1/n}e^{i\theta})e^{-in\theta}d\theta+
\int_A T(e^{-1/n}e^{i\theta})e^{-in\theta}d\theta\Big)\\
&=\frac{e}{2\pi}\int_{-b/n}^{b/n} 
T(e^{-1/n}e^{i\theta})e^{-in\theta}d\theta+\frac{e}{2\pi}(I_{\delta_0}+I_{-\delta_0}+I_A).
\end{align}

By~\cite[Proposition 4.6]{Terhesiu12}, 
\[
\lim_{b\to\infty}\lim_{n\to\infty}n^{1-\beta}
\ell(n)\Gamma(1-\beta)\int_{-b/n}^{b/n}
T(e^{-1/n}e^{i\theta})e^{-in\theta}d\theta=\frac{2\pi}{e}\frac{1}{\Gamma(\beta)}P.
\]
Hence, the conclusion will follow once we show that $n^{1-\beta}\ell(n)I_A=o(1)$ and 
 $\lim_{b\to\infty}\lim_{n\to\infty}
 n^{1-\beta}\ell(n)(I_{\delta_0}+I_{-\delta_0})=0$.
 
We first estimate $I_A$. Compute that 
\[
I_A=\frac{i}{n}\int_A T(e^{-1/n}e^{i\theta})\frac{d}{d\theta}(e^{-in\theta})d\theta=
\frac{1}{in}\int_A \frac{d}{d\theta}(T(e^{-1/n}e^{i\theta}))e^{-in\theta}d\theta+E(n),
\] 
where $E(n)\ll n^{-1}(\|T(e^{-1/n}e^{i\epsilon b/n})\|_{\cB\to\cB_w}+
\|T(e^{-1/n}e^{i\pi})\|_{\cB\to\cB_w})$. By Corollary~\ref{cor-estTdifKL} and (H5),
$\|T(e^{-1/n}e^{i\theta})\|_{\cB\to\cB_w}=O(1)$ for all $\theta\in A$.
Hence $E(n)=O(n^{-1})$. Note that for $\theta\in A$, $|\theta|>\delta_0$ and thus
 Corollary~\ref{cor-deriv} applies.
It follows that
$\|\frac{d}{d\theta}(T(z))\|_{\cB\to\cB_w}\ll n^{1-\beta+\epsilon_0}\log n$.
Putting these together, 
\begin{equation}\label{eq-est-IA}
|I_A|\ll n^{-(\beta-\epsilon_0)}\log n+n^{-1}\ll
n^{-(\beta-\epsilon_0)}\log n.
\end{equation}
 Since  $\epsilon_0<\beta^*$ , where $\beta^*<\max\{2\beta-1,1-\beta\}$,
we have  $n^{1-\beta}\ell(n)|I_A|\ll n^{-(2\beta-1-\epsilon_0)}\ell(n)\log n=o(1)$.

Next, we estimate $I_{\delta_0}$. The estimate for $I_{-\delta_0}$ follows by a similar argument.
Recall $b\in (0,n\delta_0)$. Proceeding as in the proof of ~\cite[Lemma 5.1]{MT}(see also~\cite{GL}),  we write
\[
I_{\delta_0}=\int_{b/n}^{\delta_0} T(e^{-1/n}e^{i\theta}) e^{-in\theta}\,d\theta
=-\int_{(b+\pi)/n}^{\delta_0+\pi/n}T(e^{-1/n}e^{i(\theta-\pi/n)}) e^{-in\theta}\,d\theta.
\]
Hence 
\[
2I_{\delta_0}=\int_{b/n}^{\delta_0} T(e^{-1/n}e^{i\theta}) e^{-in\theta}\,d\theta
-\int_{(b+\pi)/n}^{\delta_0+\pi/n}T(e^{-1/n}e^{i(\theta-\pi/n)}) e^{-in\theta}\,d\theta=I_1+I_2+I_3,
\]
where
\begin{align*}
I_1  & = \int_{\delta_0}^{\delta_0+\pi/n}T(e^{-1/n}e^{i(\theta-\pi/n)}) e^{-in\theta}\,d\theta, \qquad
I_2  = \int_{b/n}^{(b+\pi)/n}T(e^{-1/n}e^{i(\theta-\pi/n)}) e^{-in\theta}\,d\theta, \\
I_3 & =\int_{(b+\pi)/n}^{\delta_0} \{  T(e^{-1/n}e^{i\theta})-T(e^{-1/n}e^{i(\theta-\pi/n)})\}e^{-in\theta}\,d\theta.
\end{align*}

We already know $\|T(e^{-1/n}e^{i\theta})\|_{\cB}=O(1)$ for all $|\theta|>\delta_0$. Thus, $|I_1|\ll 1/n$. By Corollary~\ref{cor-estTdifKL},
 $\|T(e^{-u}e^{i\theta})\|_{\cB\to\cB_w}\ll
\ell(1/|u-i\theta|)^{-1}|u-i\theta|^{-\beta}$. This together with standard calculations implies that
$|I_2|\ll \ell(n)^{-1}n^{-(1-\beta)} b^{-(\beta-\gamma^*)}$,
  for any $0<\gamma^*<\beta$.  Putting the above together,
 $n^{1-\beta}\ell(n)I_{\delta}=n^{1-\beta}\ell(n)I_3+O( b^{-(\beta-\gamma^*)})$. 
 
 Next, we estimate $I_3$. By Proposition~\ref{prop-estTdifKL}, for all $\theta\in ((b+\pi)/n, \delta_0)$ and
 for any $\epsilon_1\in (\epsilon_0,2\beta-1)$, we have 
\begin{align*}
 \|T(e^{-1/n}e^{i\theta})-T(e^{-1/n}e^{i(\theta-\pi/n)})\|_{\cB\to\cB_w}&\ll 
\ell(n)\ell(n/|1-in\theta|)^{-2}n^{-\beta}|\frac 1n-i\theta|^{-2\beta}\\
&+\ell(n/|1-in\theta|)^{-1}
n^{-(\ggen\beta-\epsilon_1)} |\frac 1n-i\theta|^{-\beta}.
\end{align*}
 Hence, 
\begin{align}\label{eq_I3}
\nonumber|I_3| &\ll n^{-\beta}\ell(n)\int_{(b+\pi)/n}^{\delta_0} \ell(n/|1-in\theta|)^{-2}\theta^{-2\beta}d\theta
+n^{-(\ggen\beta-\epsilon_1)}\int_{(b+\pi)/n}^{\delta_0}
\ell(n/|1-in\theta|)^{-1}\theta^{-\beta}d\theta
\\\nonumber& 
\ll \ell(n)^{-1}n^{-\beta}\int_{(b+\pi)/n}^{\delta_0} \theta^{-2\beta}
\frac{\ell(n)^{2}}{\ell(n/|1-in\theta|)^{2}}d\theta
+\ell(n)^{-1}n^{-(\ggen\beta-\epsilon_1)}\int_{(b+\pi)/n}^{\delta_0}
\theta^{-\beta}\frac{\ell(n)}
{\ell(n/|1-in\theta|)}d\theta\\
&= \ell(n)^{-1}n^{-\beta}I_{3,1}+
\ell(n)^{-1}n^{-(\ggen\beta-\epsilon_1)}I_{3, 2}.
\end{align}

Using Potter's bounds (see, for instance,~\cite{BGT}), for any $\delta_1>0$,
\[
I_{3,1}=n^{2\beta-1} \int_{b+\pi}^{n\delta_0} \sigma^{-2\beta}\frac{\ell(n)^{2}}{\ell(n/|1-i\sigma|)^{2}}\,d\sigma
\ll n^{(2\beta-1)}\int_{b+\pi}^{n\delta_0} \sigma^{-(2\beta-\delta_1)}d\sigma.
\]
Taking $0<\delta_1<2\beta -1$, 
\begin{equation*}\label{eq_I31}
\ell(n)^{-1}n^{-\beta}|I_{3,1}|\ll \ell(n)^{-1}n^{-\beta}n^{2\beta-1} b^{2\beta-\delta_1-1}=
\ell(n)^{-1} n^{\beta-1}b^{-(2\beta-1-\delta_1)}. 
\end{equation*}

Finally, we estimate $I_{3,2}$. Using Potter's bounds,
we obtain that for any $\delta_2>0$ and $\delta_3>0$,  
\[
|I_{3,2}|\ll \int_{(b+\pi)/n}^{\delta_0} \theta^{-\beta}
\Big((\theta n)^{\delta_2}+(\theta n)^{-\delta_3}\Big)\, d\theta\ll
n^{\delta_2}\int_{(b+\pi)/n}^{\delta_0} \theta^{-(\beta+\delta_3)}\, d\theta.
\]
Hence, $\ell(n)^{-1}n^{-(\ggen\beta-\epsilon_1)}|I_{3,2}|\ll \ell(n)^{-1}n^{-(\ggen\beta-\epsilon_1-\delta_3)}$
for arbitrary small $\delta_2$.

 Putting together the estimates for $I_1, I_2$ and $I_3$ (using~\eqref{eq_I3} and the estimates for
$\ell(n)^{-1}n^{-\beta}|I_{3,1}|$ and  $\ell(n)^{-1}n^{-(\ggen\beta-\epsilon_1)}|I_{3,2}|$), we obtain that
for arbitrary small $\delta_1,\delta_3$,
\begin{equation}\label{eq-est-Ieps}
|I_{\delta_0}|\ll n^{\beta-1}\ell(n)^{-1} b^{-(2\beta-1-\delta_1)}+n^{-(\ggen\beta-\epsilon_1-\delta_3)}\ell(n)^{-1}.
\end{equation} 
Since $\ggen\beta>\eps_0$ and  $\eps_0<\epsilon_1<\beta^*$ with $\beta^*<\max\{2\beta-1,1-\beta\}$,
\begin{equation*}
|I_{\delta_0}|\ll n^{\beta-1}\ell(n)^{-1} b^{-(2\beta-1-\delta_1)}.
\end{equation*} 
Hence, $n^{1-\beta}\ell(n)|I_{\delta}|\ll b^{-(2\beta-1-\delta_1)}$ and thus,
$\lim_{b\to\infty}\lim_{n\to\infty} n^{1-\beta}\ell(n)I_{\delta_0}=0$. By a similar argument,
 $\lim_{b\to\infty}\lim_{n\to\infty}n^{1-\beta}\ell(n)|I_{-{\delta_0}}|=0$, ending the proof.~\end{proof}

We can now complete

\begin{pfof}{Theorem~\ref{cor-equiMTl}} The conclusion follows from  Theorem~\ref{lemma-FO-Tn-MT} and  Remark~\ref{rmk-Ident}.
For completeness, we recall the standard argument.

Recall $P1=h$,  $P1=\mu$ and $Pv=h\langle v,1\rangle$.
Assumption (H1)(ii) ensures that for any $C^\alpha$ observable $v:M\to\R$, $v$
supported on $Y$, we have $P(v h)=(\int_M v\, d\mu)h$.
Also, by Theorem~\ref{lemma-FO-Tn-MT} and Lemma~\ref{lemma-FO-Tn-G},
$\|\ell(n)n^{1-\beta}\Id_Y L^n (vh)- d_0 Pvh\|_{\cB\to\cB_w}=o(1)$.
Putting these together,
\begin{align*}
\ell(n)n^{1-\beta}\int_M  v\, w\circ f^n \, d\mu &=
\ell(n)n^{1-\beta}\langle L^n (vh), w\rangle=d_0\langle  P(vh),w\rangle +o(1)\\
& =d_0 \Big(\int_M v\, d\mu\Big) \langle h,w\rangle  dm+o(1)=d_0 \int_M v\, d\mu\int_M w\, d\mu+o(1).
\end{align*}~\end{pfof}

\subsection{Mixing under (H4)(iii) in the infinite case with $\beta\in (0,1)$}

We recall that under hypotheses (H1), (H2) with $\ggen=1$, (H3), (H4)(i) and (H5)(i),  Lemma~\ref{lemma-asymplambda0} gives
precise information about the asymptotic behavior of $1-\lambda(z)$. Repeating the argument used in the proof
of  Lemma~\ref{lemma-asymplambda0}
with (H4)(iii) instead of (H4)(i) and (H2) with $\ggen\in (0,1]$, we obtain that the conclusion
of  Lemma~\ref{lemma-asymplambda0} under (H4)(iii) and (H2) with $\ggen\in (0,1]$.
Given this,  the arguments in~\cite{Gouezel11} carry over
with no modification, yielding 

\begin{lemma}{A consequence of~\cite[Theorem 1.4]{Gouezel11}} \label{lemma-FO-Tn-G}
Assume that (H1), (H2) with $\ggen\in (0,1]$ and (H5) hold. Let $\beta\in (0,1)$ and suppose that (H3) and (H4)(iii) hold. Then,
\[
\sup_{v\in\cB, \|v\|_{\cB}=1}\|\ell(n)n^{1-\beta}T_n v-d_0 Pv\|_{\cB}\to 0.
\] 
\end{lemma}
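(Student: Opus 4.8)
The plan is to deduce the statement from \cite[Theorem~1.4]{Gouezel11} once we have checked that all of its hypotheses hold for the operators $R_n$ acting on the single Banach space $\cB$. First I would record that (H4)(iii) implies (H4)(i): indeed $\sum_n\|R_n\|_\cB\ll\sum_n n^{-(\beta+1)}<\infty$ because $\beta>0$. Hence $R(z)=\sum_n R_nz^n$ is a continuous family of bounded operators on $\cB$, analytic on $\D$, and by (H1)(iv) together with standard analytic perturbation theory it has, for $z$ in a neighbourhood of $1$, a simple isolated eigenvalue $\lambda(z)$ (with $\lambda(1)=1$), spectral projection $P(z)\to P$, and normalised eigenvector $v(z)$, $\langle v(z),1\rangle=1$, $v(1)=\mu$ --- exactly the set-up of Subsection~\ref{sec-FO1_Tz}. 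Moreover (H5)(i) forces the spectral radii of $R(z)$, $z\in\D$, to be strictly smaller than one, so $T(z)=(I-R(z))^{-1}$ on $\D$, and (H5)(ii) extends $T(z)$ continuously to $\bar\D\setminus\{1\}$.

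The one genuinely new point is to re-establish the conclusion of Lemma~\ref{lemma-asymplambda0} under the present hypotheses, i.e. with (H4)(iii) in place of (H4)(i) and with (H2) allowed to hold only with a general $\ggen\in(0,1]$. Here I would use (H4)(iii) to get a quantitative modulus of continuity for $v(z)$: arguing as in~\eqref{eq-contR} but with $\|R_k\|_\cB\ll k^{-(\beta+1)}$ and splitting the sum at $k\sim|u-i\theta|^{-1}$,
\[
\|R(e^{-u+i\theta})-R(1)\|_\cB\ll\sum_k|1-e^{-k(u-i\theta)}|\,k^{-(\beta+1)}\ll|u-i\theta|^{\beta},
\]
whence classical analytic perturbation theory yields $\|v(e^{-u+i\theta})-v(1)\|_\cB\ll|u-i\theta|^{\beta}$. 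Plugging this into the identity~\eqref{eq-ev-Gou}, bounding the $n$-th summand by $|z^n-1|\cdot C\|v(1)-v(z)\|_\cB\,\mu(Y_n)^\ggen$ via (H2), and summing with Lemma~\ref{lem:compone}, the correction term obeys
\[
\Bigl|\sum_n(z^n-1)\langle\Id_{Y_n}[v(1)-v(z)],1\rangle\Bigr|\ll|u-i\theta|^{\beta+\ggen\beta-(1-\ggen)\ve}\ell(1/|u-i\theta|),
\]
which, for $\ve$ small enough that $\ggen\beta-(1-\ggen)\ve>0$, is $o(|u-i\theta|^{\beta}\ell(1/|u-i\theta|))$. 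Combining with the Garsia--Lamperti asymptotics~\eqref{eq-GL} for $\mu(1-z^\varphi)$ gives $1-\lambda(e^{-(u+i\theta)})=\Gamma(1-\beta)\ell(1/|u-i\theta|)(u-i\theta)^\beta(1+o(1))$ as $u,\theta\to0$, which is precisely the conclusion of Lemma~\ref{lemma-asymplambda0}.

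With this in hand, the remaining ingredients required by Gou\"ezel's argument --- which is stated abstractly for operators on a Banach space --- are the spectral gap of $R(1)$ on $\cB$ (H1)(iv), regular variation of the tail with exponent $\beta\in(0,1)$ (H3), the decay $\|R_n\|_\cB\ll n^{-(\beta+1)}$ (H4)(iii), the non-vanishing of $I-R(z)$ on $\bar\D\setminus\{1\}$ (H5)(ii), and the asymptotics of $\lambda(z)$ just proved. These match the hypotheses of \cite[Theorem~1.4]{Gouezel11} verbatim, so its proof carries over with no modification and gives $\ell(n)n^{1-\beta}T_n\to d_0P$ in the operator norm of $\cB$, with $d_0=\tfrac1\pi\sin\pi\beta=[\Gamma(\beta)\Gamma(1-\beta)]^{-1}$; passing to the supremum over the unit ball of $\cB$ is exactly the claim. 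I expect the only delicate step to be the second one: one must check that the weaker localization hypothesis (H2) with $\ggen<1$ still renders the correction term in~\eqref{eq-ev-Gou} negligible against the leading $|u-i\theta|^{\beta}\ell$-term, which is precisely where the H\"older continuity $\|v(z)-v(1)\|_\cB\ll|u-i\theta|^{\beta}$ supplied by (H4)(iii) and the strictly positive gain $\ggen\beta$ in Lemma~\ref{lem:compone} are used; verifying that Gou\"ezel's constants and normalizations ($m(Y)=\mu(Y)=1$) agree with ours is routine bookkeeping.
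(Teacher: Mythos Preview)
Your proposal is correct and follows essentially the same approach as the paper, which merely asserts that ``repeating the argument used in the proof of Lemma~\ref{lemma-asymplambda0} with (H4)(iii) instead of (H4)(i) and (H2) with $\ggen\in(0,1]$'' gives the eigenvalue asymptotics, after which ``the arguments in~\cite{Gouezel11} carry over with no modification.'' You have in fact supplied the one detail the paper leaves implicit: when $\ggen<1$ the bare $o(1)$-continuity of $v(z)$ coming from (H4)(i) would not suffice to make the correction term in~\eqref{eq-ev-Gou} negligible against $|u-i\theta|^{\beta}\ell$, and it is precisely the quantitative estimate $\|R(z)-R(1)\|_\cB\ll|u-i\theta|^{\beta}$ furnished by (H4)(iii) (hence $\|v(z)-v(1)\|_\cB\ll|u-i\theta|^{\beta}$) that closes this gap.
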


By the argument used in the proof of Theorem~\ref{cor-equiMTl} with Theorem~\ref{lemma-FO-Tn-MT} replaced by Lemma~\ref{lemma-FO-Tn-G}, we obtain 

\begin{cor} \label{cor-equilG}
Assume the setting of Lemma~\ref{lemma-FO-Tn-G}. Then Theorem~\ref{cor-equiMTl} holds
 for all $\beta\in (0,1)$.
\end{cor}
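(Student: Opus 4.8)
The plan is to derive Corollary~\ref{cor-equilG} exactly as Theorem~\ref{cor-equiMTl} was derived from Theorem~\ref{lemma-FO-Tn-MT}, but with the role of the ``first order asymptotic of $T_n$'' result played by Lemma~\ref{lemma-FO-Tn-G} instead. The only genuine input is the identification $T_n = 1_Y L^n 1_Y$ from Remark~\ref{rmk-Ident} together with the spectral picture from (H1): $P1 = h = \mu$ and $Pv = h\langle v,1\rangle$, and the fact (H1)(ii) that multiplication by a $C^\alpha$ observable preserves $\cB$. So first I would record that under the hypotheses of Lemma~\ref{lemma-FO-Tn-G} — namely (H1), (H2) with $\ggen\in(0,1]$, (H3), (H4)(iii), (H5), and $\beta\in(0,1)$ — all the ingredients needed in the proof of Theorem~\ref{cor-equiMTl} are in place, with the $o(1)$ error now measured in the stronger $\cB$-norm rather than $\cB\to\cB_w$, which only helps.

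The key steps, in order, are as follows. Fix $C^\alpha$ observables $v,w$ supported on $Y$. Step one: by (H1)(ii), $vh\in\cB$, and since $\mu = h$, one has $P(vh) = h\,\langle vh,1\rangle = \big(\int_M v\,d\mu\big)\,h$; this is the same computation used in the proof of Theorem~\ref{cor-equiMTl}. Step two: apply Lemma~\ref{lemma-FO-Tn-G} to the normalized vector $vh/\|vh\|_\cB$ to get $\|\ell(n)n^{1-\beta}T_n(vh) - d_0\,P(vh)\|_\cB \to 0$, and then use Remark~\ref{rmk-Ident} to replace $T_n$ by $1_Y L^n 1_Y$, so that $\ell(n)n^{1-\beta}\,1_Y L^n(vh) \to d_0\big(\int_M v\,d\mu\big)h$ in $\cB$. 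Step three: pair against $w\in C^\gamma$ (legitimate since $\cB\subset(C^\gamma)'$ by (H1)(i)) using the identity $\int_M v\,(w\circ f^n)\,d\mu = \langle L^n(vh),w\rangle$ and the convention (*), giving
\[
\ell(n)n^{1-\beta}\int_M v\,w\circ f^n\,d\mu = d_0\Big(\int_M v\,d\mu\Big)\langle h,w\rangle + o(1) = d_0\int_M v\,d\mu\int_M w\,d\mu + o(1),
\]
since $\langle h,w\rangle = \int_M w\,d\mu$. Taking $n\to\infty$ yields the statement of Theorem~\ref{cor-equiMTl} for all $\beta\in(0,1)$.

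The main obstacle — if one can call it that — is simply checking that nothing in the proof of Theorem~\ref{cor-equiMTl} used the specific form (H4)(ii) or the restriction $\beta\in(1/2,1)$ beyond its appearance as a hypothesis of Theorem~\ref{lemma-FO-Tn-MT}; a careful reading shows that the only facts invoked there are (H1)(ii), Remark~\ref{rmk-Ident}, and the $T_n$-asymptotic itself, all of which remain available. One small point to be careful about: Lemma~\ref{lemma-FO-Tn-G} delivers convergence in $\|\cdot\|_\cB$ rather than $\|\cdot\|_{\cB\to\cB_w}$, so one should note that the pairing with $w\in C^\gamma$ is still continuous because $\cB\hookrightarrow(C^\gamma)'$ by (H1)(i) — there is no loss. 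Hence the corollary follows with essentially no new work beyond invoking Lemma~\ref{lemma-FO-Tn-G} in place of Theorem~\ref{lemma-FO-Tn-MT}.
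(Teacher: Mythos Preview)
Your proposal is correct and follows exactly the paper's own route: the paper proves Corollary~\ref{cor-equilG} in one line by saying ``by the argument used in the proof of Theorem~\ref{cor-equiMTl} with Theorem~\ref{lemma-FO-Tn-MT} replaced by Lemma~\ref{lemma-FO-Tn-G}'', and you have simply unpacked that argument step by step. Your observation that the $\cB$-norm convergence from Lemma~\ref{lemma-FO-Tn-G} is stronger than what is needed for the pairing with $w$ is correct and harmless.
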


\subsection{Polynomial decay of correlation under (H4)(iii) in the finite case with $\beta>1$}
\label{sec-decay}
We note that in the present framework under (H1), (H4)(iii) with $\beta>1$ and (H5), the abstract ~\cite[Theorem 1]{Gouezel04}
holds. As a consequence of this result and using the argument used in the proof of  Theorem~\ref{cor-equiMTl} with Theorem~\ref{lemma-FO-Tn-MT} replaced by~\cite[Theorem 1]{Gouezel04}, one has

\begin{cor} \label{cor-dec}{A consequence of~\cite[Theorem 1]{Gouezel04}.}
Assume (H1) and (H5).  Suppose that $\mu(\varphi>n)=O(n^{-\beta})$  with $\beta>1$
and assume that  (H4)(iii) holds. Let $c_0n^{-(\beta+1)}=\sum_{k>n}\mu(\varphi=k)$. 
 If $v,w:M\to\R$ are $C^\alpha$ (with $\alpha$ as in (H1)(i)) observables supported on $Y$, then
\[
\lim_{n\to\infty}n^{1-\beta}\Big|\int_M  v\, w\circ f^n \, d\mu - \int_M v\, d\mu\int_M w\, d\mu\Big|=c_0  \int_M v\, d\mu\int_M w\, d\mu.
\]
\end{cor}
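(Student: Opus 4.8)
The plan is to obtain this corollary as a direct consequence of the Banach space form of Gou\"ezel's renewal theorem \cite[Theorem 1]{Gouezel04}, following verbatim the passage from operator asymptotics to correlations already carried out in the proof of Theorem~\ref{cor-equiMTl}. The first step is to record that hypotheses (H1), (H4)(iii) (with $\beta>1$) and (H5) furnish exactly the abstract input that Gou\"ezel's argument needs. Indeed, (H4)(iii) gives $\sum_n n^{\beta'}\|R_n\|_{\cB}<\infty$ for every $\beta'<\beta$, so $z\mapsto R(z)=\sum_n R_n z^n$ is a $\lfloor\beta\rfloor$-times continuously differentiable family of bounded operators on $\cB$ on all of $\bar\D$; (H1)(iii)--(iv) give that $R=R(1)$ is bounded with a simple eigenvalue $1$ and a spectral gap, with spectral projection $P$ satisfying $Pg=\mu\langle g,1\rangle$ and $\langle Pg,1\rangle=\langle g,1\rangle$; and (H5)(ii) gives that $1$ is not in the spectrum of $R(z)$ for $z\in\bar\D\setminus\{1\}$, so that $T(z)=(I-R(z))^{-1}$ extends continuously to $\bar\D\setminus\{1\}$. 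This is the content of Remark~\ref{rmk-decay} and Subsection~\ref{sec-decay}. With this in hand, \cite[Theorem 1]{Gouezel04} yields, for non-integer $\beta>1$,
\[
\Big\|\,n^{\beta-1}\Big(T_n-\tfrac{1}{\mu(\varphi)}P\Big)-c_0\,P\,\Big\|_{\cB}\to 0\qquad(n\to\infty),
\]
where $c_0$ is the constant appearing in the statement, determined by the second-order term of the renewal expansion (i.e. by the tail sum $\sum_{k>n}\mu(\varphi>k)$); at integer $\beta$ one gets harmless logarithmic corrections, which I would either exclude or treat separately.

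The second step is to pair this with test functions, exactly as in the proof of Theorem~\ref{cor-equiMTl}. Write $h:=P1=\mu\in\cB$ for the invariant element (equivalently, the $f$-invariant measure $\mu$ restricted to $Y$). For $v,w\in C^\alpha$ supported on $Y$ we have $vh\in\cB$ by (H1)(ii), and by Remark~\ref{rmk-Ident} together with the definition of the transfer operator,
\[
\int_M v\,(w\circ f^n)\,d\mu=\langle L^n(vh),w\rangle=\langle T_n(vh),w\rangle .
\]
Since $w\in C^\alpha\subset\cB'$ via the embeddings in (H1)(i), the operator estimate from Step 1 may be paired with $w$: the $o(n^{1-\beta})$ remainder in $\cB\to\cB$ contributes an $o(n^{1-\beta})$ scalar remainder, while the leading terms are computed using $P(vh)=\big(\int_M v\,d\mu\big)h$ (this is the use of (H1)(ii) already isolated in the proof of Theorem~\ref{cor-equiMTl}) together with $\langle h,w\rangle=\int_M w\,d\mu$. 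This gives
\[
n^{\beta-1}\Big(\int_M v\,(w\circ f^n)\,d\mu-\tfrac{1}{\mu(\varphi)}\int_M v\,d\mu\int_M w\,d\mu\Big)\to c_0\int_M v\,d\mu\int_M w\,d\mu ,
\]
and, after matching the normalisation so that $\tfrac{1}{\mu(\varphi)}\int_M v\,d\mu\int_M w\,d\mu$ is the product term in the statement, taking absolute values yields the claimed asymptotics.

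I expect the only non-routine point to be Step 1, and even there the mathematical content is essentially already available: what must be checked is that the proof of \cite[Theorem 1]{Gouezel04} nowhere uses that $\cB$ is a space of genuine functions (an $L^p$ or $\BV$ space), relying only on the module property (H1)(ii), the continuous embeddings (H1)(i), the analyticity of $z\mapsto(I-R(z))^{-1}$ on $\D$, its continuous extension to $\bar\D\setminus\{1\}$, and the $\lfloor\beta\rfloor$-smoothness of $R(z)$ near $z=1$. This is exactly the purpose of Subsection~\ref{sec-decay}; once it is granted, Step 2 is identical to the corresponding passage in the proof of Theorem~\ref{cor-equiMTl}. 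A secondary, purely computational, matter is to pin down the constant $c_0$ and the normalisation relating $\mu$, $\mu_0$ and $\mu(\varphi)$ so as to land on the precise form displayed in the statement.
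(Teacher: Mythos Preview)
Your proposal is correct and follows essentially the same approach as the paper: the paper does not give an explicit proof but simply notes (in Subsection~\ref{sec-decay}) that under (H1), (H4)(iii) with $\beta>1$, and (H5) the abstract \cite[Theorem~1]{Gouezel04} applies, and that the corollary then follows by repeating the argument of the proof of Theorem~\ref{cor-equiMTl} with Theorem~\ref{lemma-FO-Tn-MT} replaced by Gou\"ezel's result. Your two steps are exactly this, spelled out in more detail.
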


\subsection{Weak pointwise dual ergodicity under weak assumptions}
\label{rmk-wpde}
As mentioned in the introduction, 
the present framework allows us to deal with  the property of weak
pointwise dual ergodicity (weak p.d.e.) under some weak conditions (under which mixing cannot be proved). Below,
we provide a result that allows one to check  weak p.d.e. in the framework of  Section~\ref{sec-oprenseq} without assuming
(H5) and only requiring (H4)(i) and (H5)(i).

 We recall that a conservative ergodic measure preserving transformation $(X,\mathcal{A}, f,\mu)$
is pointwise dual ergodic (p.d.e.)  if there exists some positive sequence $a_n$
such that $\lim_{n\to\infty}a_n^{-1}\sum_{j=0}^nL^jv= \int_X v\, d\mu$,  a.e. on $X$  for all $v\in L^1(\mu)$.
The property of weak p.d.e. has been recently exploited and defined in~\cite{AZ}.
As noted in~\cite{Aaronson},  if $f$is invertible and $\mu(X)=\infty$ then $f$ cannot be p.d.e.,
but it can be weak p.d.e.; that is, there exists some positive sequence $a_n$ such that
\begin{itemize}
\item[(i)] $a_n^{-1}\sum_{j=0}^nL^jv\to^{\nu} \int_X v\, d\mu$ as $n\to\infty$, for all $v\in L^1(\mu)$. Here,
 $\to^{\nu}$ stands for convergence
in measure for any finite measure $\nu\ll\mu$.
\item[(ii)]$\limsup_{n\to\infty}a_n^{-1}\sum_{j=0}^nL^jv= \int_X v\, d\mu$, a.e. on $X$  for all $v\in L^1(\mu)$.
\end{itemize}

As shown in~\cite[Proposition 3.1]{AZ}, weak p.d.e. for infinite c.e.m. p.t. can be established as soon as items
(i) and (ii) above are shown to hold for $v=1_Y$ for some $Y\in\mathcal{A}$ with $0<\mu(Y)<\infty$. Moreover,
as noted elsewhere (see~\cite{AZ} and reference therein), item (i) follows as soon as the above mentioned 
convergence  in measure is established for $\mu|_Y$ for some $Y\in\mathcal{A}$ with $0<\mu(Y)<\infty$.

In the  framework of  Section~\ref{sec-oprenseq},  the following holds result for original transformations $f$
with first return map $F:Y\to Y$:

\begin{prop}\label{prop-wpde}Assume (H1), (H2), (H3), (H4)(i) and (H5)(i). Furthermore, set $a_n=\ell(n)n^{1-\beta}d_0$ and
 suppose that $\sup_{Y}a_n^{-1}\sum_{j=0}^nL^j 1_Y\to 1$, $\mod \mu$,  as $n\to\infty$. Then $f$ is weak p.d.e.
\end{prop}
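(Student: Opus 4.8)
The plan is to reduce, via \cite[Proposition~3.1]{AZ}, the assertion ``$f$ is weak p.d.e.'' to verifying the two defining properties of weak pointwise dual ergodicity for the single observable $v=1_Y$, bearing in mind (as recalled above) that property~(i) follows once one has convergence in measure with respect to $\mu|_Y$. Writing $g_n:=\sum_{j=0}^nL^j1_Y$ (here $L$ denotes the transfer operator of $f$ for the invariant measure $\mu$, as in the definition of p.d.e., and $\mu|_Y$ is the $F$-invariant probability of~(H1)(iv)), it then suffices to prove: (a) $a_n^{-1}g_n\to1$ in $\mu|_Y$-measure; and (b) $\limsup_{n\to\infty}a_n^{-1}g_n=1$, $\mu$-a.e.\ on $M$. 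First I would dispose of the passage from $M$ to $Y$. Realising $f$ as the Kakutani tower over $(Y,F)$ with height function $\varphi$ (see~\cite{Kakutani})---on which $L$ shifts each fibre up by one step with unit Jacobian and acts on the base as the induced transfer operator---one obtains $L^j1_Y(y,\ell)=L^{\,j-\ell}1_Y(y,0)$ for $j\ge\ell$, and $L^j1_Y$ vanishes at heights $>j$. Summing over $j$ gives the identity $g_n(x)=g_{n-\ell(x)}\big(\pi(x)\big)$, where $\pi(x)\in Y$ and $\ell(x)<\infty$ are the base point and height of $x$. Since $a_n$ is regularly varying of positive index, $a_{n-\ell}/a_n\to1$ for each fixed $\ell$, so both (b) on $M$ and convergence in measure on $M$ for finite measures $\ll\mu$ (truncating the tower at a large height) reduce to the corresponding statements on $Y$.

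On $Y$ the relevant facts are that $g_n|_Y\ge0$ is non-decreasing in $n$ and is governed by the renewal operators $T_j$ of~\eqref{eq_TnRn} (cf.\ Remark~\ref{rmk-Ident}), and that the hypothesis says precisely $\sup_Ya_n^{-1}g_n\to1$ (essential supremum modulo $\mu$-null sets); in particular $\limsup_na_n^{-1}g_n(y)\le1$ for $\mu$-a.e.\ $y\in Y$ and, since $\mu(Y)=1$, $\int_Y(a_n^{-1}g_n-1)_+\,d\mu\le(\sup_Ya_n^{-1}g_n-1)_+\to0$. The one substantive point is the matching \emph{lower} bound for $\int_Yg_n\,d\mu=\sum_{j=0}^n\mu(Y\cap f^{-j}Y)$. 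To get it I would observe that the generating function $\sum_{j\ge0}\mu(Y\cap f^{-j}Y)z^j$ equals $\langle T(z)h,1\rangle$ for the appropriate $h$ (the invariant density), that by the renewal equation this behaves, as $z\uparrow1$, like $(1-\lambda(z))^{-1}$, and that Corollary~\ref{eq-asymp0} (equivalently Lemma~\ref{lemma-asymplambda0}) gives $(1-\lambda(z))^{-1}\sim\Gamma(1-\beta)^{-1}\ell(1/|1-z|)^{-1}(1-z)^{-\beta}$; Karamata's Tauberian theorem~\cite{BGT} for power series with non-negative coefficients then yields $\sum_{j=0}^n\mu(Y\cap f^{-j}Y)\sim a_n$.

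Putting the two bounds together on $Y$: from the identity $\int_Y|a_n^{-1}g_n-1|\,d\mu=2\int_Y(a_n^{-1}g_n-1)_+\,d\mu-\big(a_n^{-1}\int_Yg_n\,d\mu-1\big)$ and the estimates above, $a_n^{-1}g_n\to1$ in $L^1(\mu|_Y)$, hence in $\mu|_Y$-measure, which is~(a); extracting an a.e.-convergent subsequence then gives $\limsup_na_n^{-1}g_n\ge1$ $\mu$-a.e.\ on $Y$, and together with the reverse inequality from the hypothesis this gives $\limsup_na_n^{-1}g_n=1$ $\mu|_Y$-a.e., which yields~(b) on $M$ by the first paragraph.

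The step I expect to be the main obstacle is the lower bound $\sum_{j\le n}\mu(Y\cap f^{-j}Y)\sim a_n$: it rests on the sharp asymptotics of $1-\lambda(z)$ at $z=1$ (hence on the perturbative analysis behind Corollary~\ref{eq-asymp0}) and on correctly matching the multiplicative constant with the normalising sequence $a_n$, together with some bookkeeping of reference measures (Lebesgue versus $\mu$) when identifying $1_YL^j1_Y$ with $T_j$ on $Y$ and when transferring statements back to $M$. By contrast, the Kakutani-tower identity for the transfer operator, the monotonicity and positivity of $g_n$, and the elementary $L^1$ estimate are routine.
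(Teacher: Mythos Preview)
Your approach is correct and runs along the same track as the paper's own proof: both obtain the asymptotic of $T(z)$ near $z=1$ from Corollary~\ref{eq-asymp0} and then pass to partial sums by a Karamata--Tauberian argument, using the hypothesis $\sup_Y a_n^{-1}g_n\to 1$ as the Tauberian boundedness condition. The only genuine difference is packaging. The paper invokes the operator Tauberian theorem of~\cite[Lemma~3.5]{MT11} directly on $T(e^{-u})$, which yields $a_n^{-1}\nu(g_n)\to 1$ for probability measures $\nu\in\cB$ supported on $Y$ (in particular $\nu=\mu|_Y$), and then stops after recording that item~(i) for $1_Y$ follows. You instead apply the \emph{scalar} Karamata theorem to the nonnegative sequence $\mu(Y\cap f^{-j}Y)=\langle T_j\mu,1\rangle$ to get $a_n^{-1}\int_Y g_n\,d\mu\to 1$, and combine this with the hypothesis via the $L^1$ identity to squeeze $a_n^{-1}g_n\to 1$ in $L^1(\mu|_Y)$. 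This is the same mechanism spelled out by hand.

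Your write-up actually fills in two points the paper leaves implicit: the derivation of item~(ii) on $Y$ (upper bound from the hypothesis, lower bound from an a.e.\ convergent subsequence of the $L^1$-convergent sequence), and the transfer of both items from $Y$ to $M$ via the tower identity $g_n(x)=g_{n-\ell(x)}(\pi(x))$ together with regular variation of $a_n$. The bookkeeping you flag as the ``main obstacle'' is real but routine; note in particular that the scalar Karamata step gives $\sum_{j\le n}\mu(Y\cap f^{-j}Y)\sim \Gamma(1-\beta)^{-1}\Gamma(1+\beta)^{-1}\,n^\beta\ell(n)^{-1}$, so the normalising sequence for the \emph{sum} is regularly varying of index $\beta$ (the formula $\ell(n)n^{1-\beta}d_0$ in the statement is the normalisation for a single $T_n$, and you are right to watch the constant).
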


\begin{proof}
Recall that under  (H1), (H2), (H3) , (H4)(i) and (H5)(i), equation~\eqref{eq-asymp0}
holds. By argument used in  Corollary~\ref{eq-asymp0}, 
\[
\|T(e^{-u})-d_0\ell(1/|u)^{-1}u^{-\beta} P\|_{\cB}\to 0,
\mbox{ as } u\to 0.
\]
Let $\nu$ be a $\sigma$-finite measure on $M$ such that $\nu$ is supported on $Y$
and $\nu|_Y$ is a probability measure in $\cB$. Recall $P1=\mu$ and $\mu(Y)=1$.

By assumption, $\sup_{Y}\ell(n)n^{1-\beta}d_0\sum_{j=0}^nL^j 1_Y=O(1)$,  as $n\to\infty$. Thus,
 the argument used in the proof of~\cite[Lemma 3.5]{MT11}( a  Karamata Tauberian theorem for positive operators
that generalizes Karamata tauberian theorem for scalar sequences~\cite[Proposition 4.2]{BGT}) applies. It follows that
\[
\lim_{n\to\infty}\ell(n)n^{1-\beta}\sum_{j=0}^{n-1} L^{j^*}\nu(1_Y) =d_0.
\]
In particular, the above equation holds for $\nu=\mu$. So,
$\ell(n)n^{1-\beta}d_0\sum_{j=0}^{n-1}L^{j^*} \mu(1_Y)\to 1$, as $n\to\infty$ and item (i) ( in the definition of weak p.d.e.) for the function
$1_Y$ follows.~\end{proof}

\section{Higher order asymptotic of $T_n$: mixing rates}
\label{sec-HO-Tn}

As already mentioned in the introduction, mixing rates for non-invertible infinite measure preserving
systems have been obtained in~\cite{MT, Terhesiu12}.
The results in these works depend heavily on a higher order expansion of  the
tail probability $\mu(\varphi>n)$. The arguments in~\cite{MT, Terhesiu12} generalize to set up of
Section~\ref{sec-oprenseq} and (in an obvious notation), we state

\begin{thm} \label{lemma-HOTn}
Assume (H1),  (H3), (H4)(ii) and (H5). Let $q=\max\{ j\geq 0: (j+1)\beta-j>0\}$. Then there exist real
constants
$d_0,\ldots, d_q$ (depending only on the map $f$),\footnote{ For the precise form of these constants we refer to  in~\cite[Theoreme 9.1]{MT}
and~\cite[Theoreme 1.1]{Terhesiu12}.} such that the following hold for any $\eps_2>\eps_1>\eps_0$:
\begin{itemize}
\item[(i)] Assume (H2) with $\ggen=1$. Suppose that $\mu(\varphi>n)=c n^{-\beta}+H(n)$
for some $c>0$ and $H(n)=O(n^{-2\beta})$.

Then,
\[
T_n =(d_0 n^{\beta-1}+d_1 n^{2\beta-2}+\ldots+d_q n^{(q+1)(\beta-1)})P+D,
\] where $\|D\|_{\cB\to\cB_w}=O(\max\{n^{-(\beta-\eps_2)}, n^{-(1-\beta+\eps_1)/2}\})$.

\item[(ii)] Assume (H2) with  $\ggen=1$.   Let $\beta>1/2$ and suppose that
  $\mu(\varphi>n)=cn^{-\beta}+b(n)+H(n)$, for some  $c>0$, some function $b$ such that 
$nb(n)$ has bounded variation and $b(n)=O(n^{-2\beta})$,  and some function $H$ such that 
$H(n)=O(n^{-\gamma})$ with $\gamma>2$. 
Then (i) holds with the improved rate $\|D\|_{\cB\to\cB_w}=O( n^{-\eps_1})$.

\item[(iii)] Suppose that $\mu(\varphi>n)=c n^{-\beta}+H(n)$
for some $c>0$ and $H(n)=O(n^{-2\beta})$. Assume (H2) with $\ggen\beta>\eps_0$.
Then (i) holds with the  rate $\|D\|_{\cB\to\cB_w}=O(\max\{n^{-(\ggen\beta-\eps_2)}, n^{-(1-\ggen\beta+\eps_1)/2}\})$.

\end{itemize}
\end{thm}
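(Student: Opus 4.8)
The plan is to follow the scheme of the proof of Theorem~\ref{lemma-FO-Tn-MT}, but now extracting a full asymptotic expansion of the Taylor coefficients of $T(z)$ (which by Corollary~\ref{rem-TaylorForurier-scalar} coincide with the Fourier coefficients) rather than only the leading term. As in \eqref{eq-Tn-int} one represents $T_n=\frac{1}{2\pi}\int_{|z|=e^{-1/n}}z^{-(n+1)}T(z)\,dz$ and splits the contour into the central arc $\{|\theta|\le b/n\}$, the intermediate arcs $\{b/n<|\theta|<\delta_0\}$, and the outer arcs $\{|\theta|\ge\delta_0\}$, with $\delta_0$ as in Lemma~\ref{cor-estKL} and $b\in(0,\delta_0 n)$ a free parameter to be sent to $\infty$ after $n$.

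The first step is a higher order refinement of Lemma~\ref{lemma-asymplambda2}. Feeding the hypothesised expansion of $\mu(\vf>n)$ into the Garsia--Lamperti/Karamata estimates used there (in case (ii) the hypothesis that $nb(n)\in\BV$ sharpens those estimates, exactly as in \cite{Terhesiu12}) yields an expansion of $\mu(1-z^\vf)$ in powers $(u-i\theta)^{(j+1)\beta}\ell(1/|u-i\theta|)$. Combining this with \eqref{eq-ev-Gou} and with the bound $O(|u-i\theta|^{\beta+\ggen\beta-\eps_1}\ell(1/|u-i\theta|))$ on the correction term $\sum_n(z^n-1)\langle\Id_{Y_n}[v(1)-v(z)],1\rangle$, which follows from Lemma~\ref{cor-estKL} and Lemma~\ref{lem:compone} precisely as in the proof of Lemma~\ref{lemma-asymplambda2}, gives a matching expansion of $1-\lambda(z)$. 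Inverting the reciprocal by a geometric series produces
\[
(1-\lambda(z))^{-1}=\sum_{j=0}^{q}b_j(u-i\theta)^{-(j+1)\beta}\ell(1/|u-i\theta|)^{-(j+1)}+\widetilde E(z),
\]
where $q$ is exactly the cut-off at which the scalar expansion ceases to be non-trivial (cf.\ \cite[Theoreme 9.1]{MT}), and $\widetilde E(z)$ is $o$ of the last retained summand, of size $O(|u-i\theta|^{-(1-\ggen\beta+\eps_1)})$ (and of size $O(|u-i\theta|^{-\eps_1})$ in case (ii), where the $\BV$ hypothesis removes the slowly varying losses); in cases (i) and (ii) one sets $\ggen=1$.

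Next one inserts this into the renewal decomposition~\eqref{eq-T(z)}, $T(z)=(1-\lambda(z))^{-1}P+(1-\lambda(z))^{-1}(P(z)-P)+(I-R(z))^{-1}Q(z)$, and estimates coefficients term by term. The leading block $\sum_j b_j(u-i\theta)^{-(j+1)\beta}\ell(1/|u-i\theta|)^{-(j+1)}P$ produces, by the computation of \cite[Proposition 4.6]{Terhesiu12} on the central arc, the terms $d_j n^{(j+1)(\beta-1)}P$ with the stated constants; the intermediate arcs contribute errors that vanish as $b\to\infty$, and the outer arcs are handled by an integration by parts together with Corollary~\ref{cor-deriv}, giving $O(n^{-(\beta-\eps_0)}\log n)=O(n^{-(\beta-\eps_2)})$. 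The cross term $(1-\lambda(z))^{-1}(P(z)-P)$ is only H\"older in $z$, so it is controlled through its $\cB\to\cB_w$ modulus of continuity via Proposition~\ref{prop-estTdifKL}(ii) and the $b/n$-splitting as in the proof of Theorem~\ref{lemma-FO-Tn-MT}; optimising $b$ against the (slowly varying) continuity bound yields the error $O(n^{-(1-\ggen\beta+\eps_1)/2})$, improved to $O(n^{-\eps_1})$ in case (ii). Finally $(I-R(z))^{-1}Q(z)$ is uniformly bounded in $\cB$ near $1$ by (H1)(iv) and (H5)(i), and by Corollary~\ref{cor-deriv} and the smoothing estimate~\eqref{eq-KLcalc} its $\theta$-derivative is controlled on the outer arcs, so an integration by parts shows that it feeds only the $O(n^{-(\beta-\eps_2)})$ error. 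Case (iii) is the same computation with $\ggen\beta$ replacing $\beta$ wherever the $v(z)-v(1)$ correction or Proposition~\ref{prop-estTdifKL} enter, which turns the case (i) error into $O(\max\{n^{-(\ggen\beta-\eps_2)},n^{-(1-\ggen\beta+\eps_1)/2}\})$.

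The main obstacle is the error bookkeeping: one must carry the gain coming from the tail expansion through the reciprocal $(1-\lambda(z))^{-1}$ and through the three-arc decomposition of the contour without losing it, and --- since the perturbed objects $P(z)$ and $(I-R(z))^{-1}Q(z)$ are controlled only in the weak norm $\cB\to\cB_w$, and $P(z)$ is merely H\"older, not differentiable, in $z$ --- the Keller--Liverani smoothing argument~\eqref{eq-KLcalc} has to be reapplied at every step where one passes from a $\cB$-bound to a $\cB_w$-bound. By contrast, matching the constants $d_0,\dots,d_q$ with those of \cite[Theoreme 9.1]{MT} and \cite[Theoreme 1.1]{Terhesiu12} is a purely scalar computation that need not be redone here.
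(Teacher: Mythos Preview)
Your proposal is correct and follows essentially the same route as the paper: the three-arc contour decomposition, Corollary~\ref{cor-deriv} for the outer arcs, the estimates \eqref{eq-est-Ieps} from Proposition~\ref{prop-estTdifKL} for the intermediate arcs, the splitting \eqref{eq-T(z)} on the central arc, and the choice $b=n^{1/2}$ (your ``optimising $b$'') to balance the errors. The one cosmetic difference is that the paper does not expand $(1-\lambda(z))^{-1}$ by a geometric series; instead it writes $(1-\lambda(z))^{-1}=\mu(1-z^{\vf})^{-1}+O(|\tfrac1n-i\theta|^{\ggen\beta-\eps_1})$ directly from \eqref{eq-ev-Gou} and Lemma~\ref{lemma-asymplambda2}, and then invokes \cite[Proposition~9.5]{MT} for the purely scalar term $\mu(1-z^{\vf})^{-1}$, which is where the constants $d_0,\dots,d_q$ are actually produced---this amounts to the same computation you describe, just outsourced.
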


\begin{rmk} \label{rmk-tailf}
We note that items i), ii) correspond to the results on mixing rates for non-invertible systems
provided by~\cite[Theorem 9.1]{MT} and~\cite[Theorem 3.1]{Terhesiu12}, respectively.
If instead of (H4)(ii) we assume (H4)(iii) in the statement of Theorem~\ref{lemma-HOTn} (with the rest of the assumptions unchanged)
then item i) holds with the improved rate $\|D\|_{\cB\to\cB_w}=O(n^{-(\beta-1/2)})$
and item ii) holds with the improved rate $\|D\|_{\cB\to\cB_w}=O(n^{-\beta})$. Under (H4)(iii), no proof is required:
the arguments used in the proofs of~\cite[Theorem 9.1]{MT} and~\cite[Theorem 3.1]{Terhesiu12} simply go through.
\end{rmk}

\begin{proof}
Below we provide the argument for item iii). Item i) is proved during the proof for item iii). Item ii) follows by the argument used in the proof of ~\cite[Theoreme 1.1]{Terhesiu12}(essentially, by~\cite[Remark 2.7]{Terhesiu12}
and the argument used there);
while the estimates obtained in the proof of ~\cite[Theoreme 1.1]{Terhesiu12} change due to the present assumption (H4)(ii), the argument goes almost word by word.

 Choose  $\delta_0$ such that Lemma~\ref{cor-estKL} holds. Let $b\in (0,\delta n)$, $n\geq 1$.
Also, let $I_A, I_{\delta_0}$ and $I_{-\delta_0}$ be
defined as in the proof of Theorem~\ref{lemma-FO-Tn-MT}. Hence, equation~\eqref{eq-Tn-int} holds and we can write
\[
T_n=\frac{e}{2\pi}\int_{-b/n}^{b/n} 
T(e^{-1/n}e^{i\theta})e^{-in\theta}d\theta+\frac{e}{2\pi}(I_{\delta_0}+I_{-\delta_0}+I_A).
\]

 Let $\epsilon_0$ be as defined in assumption (H4)(ii).
By equation~\eqref{eq-est-IA}, $|I_A|\ll n^{-(\beta-\epsilon_0)}\log n$.

By equation~\eqref{eq-est-Ieps},
for any $\delta_1,\delta_3>0$ and $\eps_1>\eps_0$
\[
|I_{\delta_0} +I_{-\delta_0}|\ll n^{\beta-1}\ell(n)^{-1} b^{-(2\beta-1-\delta_1)}+n^{-(\ggen\beta-\epsilon_1-\delta_3)}\ell(n)^{-1}.
\]

Next, we estimate $\frac{e}{2\pi}\int_{-b/n}^{b/n} 
T(e^{-1/n}e^{i\theta})e^{-in\theta}d\theta$.
Under (H2) with $\ggen\leq 1$ and (H4) (ii), Lemma~\ref{lemma-asymplambda2} gives that
$|(1-\lambda((e^{-1/n}e^{i\theta}))^{-1}|\ll |\frac 1n-i\theta|^{-\beta}$.
This  together with Corollary~\ref{cor-estKL} implies that for any $\eps_1>\eps_0$,
\[
\|(1-\lambda(e^{-1/n}e^{i\theta}))^{-1}(P(e^{-1/n}e^{i\theta})-P)\|_{\cB\to\cB_w}\ll
|\frac 1n-i\theta|^{-(\beta-\epsilon_1)}.
\]
Recall  $b\in (0,\delta_0 n)$, $n\geq 1$. By (H1) and (H5),
$\|(I-R(e^{-1/n}e^{i\theta}))^{-1}Q(e^{-1/n}e^{i\theta})\|_\cB=O(1)$, for all $|\theta|<b/n$.
This together with the above displayed equation and equation~\eqref{eq-T(z)} yield
\begin{align*}
\Big|\int_{-b/n}^{b/n}(T(e^{-1/n}e^{i\theta})-(1-\lambda(e^{-1/n}e^{i\theta}))^{-1}P) e^{-in\theta}\, d\theta\Big|
&\ll\int_{0}^{b/n}\frac{|\frac 1n-i\theta|^{-(\beta-\epsilon_1)}}{\ell(1/|1/n-i\theta|)}\, d\theta\\
&\ll n^{\beta-\epsilon_1 -1} b^{1-\eps_1-\beta}.
\end{align*}

By  equation~\eqref{eq-ev-Gou} and Lemma~\ref{lemma-asymplambda2},
\[
(1-\lambda(e^{-1/n}e^{i\theta}))^{-1}=\mu(1-e^{-(u+i\theta)\varphi})^{-1}+O(|\frac 1n-i\theta|^{\beta\ggen-\eps_1}).
\]
Recall $q=\max\{ j\geq 0: (j+1)\beta-j>0\}$.
The above displayed equation together with  the argument used in the proof of~\cite[Proposition 9.5]{MT} (which exploits exactly the same assumption on $\mu(\varphi>n)$
stated in item i) of the lemma),
\begin{align*}
\int_{-b/n}^{b/n}(1-\lambda(e^{-1/n}e^{i\theta}))^{-1} e^{-in\theta}\, d\theta
&=d_0 n^{\beta-1}+d_1 n^{2\beta-2}+\ldots+d_q n^{(q+1)(\beta-1)}\\
&+\int_{-b/n}^{b/n}(|\frac 1n-i\theta|^{\beta\ggen-\eps_1}).
\end{align*}

where  $d_0,\ldots, d_q$ are real
constants, depending only on the map $f$ (again, for the precise form of these constants we refer to~\cite[Theorem 9.1]{MT}).

Putting together the last two displayed equations,
\begin{align*}
\int_{-b/n}^{b/n}T(e^{-1/n}e^{i\theta})e^{-in\theta}\, d\theta
&=d_0 n^{\beta-1}+d_1 n^{2\beta-2}+\ldots+d_q n^{(q+1)(\beta-1)}\\
&+O(b^{(1-\beta\ggen+\eps_1)} n^{-(1-\beta\ggen+\eps_1)}+ O(n^{(\beta- \epsilon_1 -1)} b^{1-\eps_1-\beta}).
\end{align*}

Recall $\eps_1>\eps_0$.
Take $b=n^{1/2}$.  So,
\[
\int_{-b/n}^{b/n}T(e^{-1/n}e^{i\theta})e^{-in\theta}\, d\theta
=d_0 n^{\beta-1}+d_1 n^{2\beta-2}+\ldots+d_q n^{(q+1)(\beta-1)}+O( n^{-(1-\beta\ggen+\eps_1)/2}).
\]
To conclude, note that  $|I_{\delta_0}+I_{-\delta_0}+I_A|\ll n^{-1/2}+n^{-(\ggen\beta-\epsilon_1-\delta_3)}\ell(n)^{-1}$.~\end{proof}

At this end we note that Theorem~\ref{cor-mixingrates} follows by the argument used in the proof of Theorem~\ref{cor-equiMTl} (with Theorem~\ref{lemma-FO-Tn-MT} replaced by Theorem~\ref{lemma-HOTn})
and that:

\begin{rmk} \label{rmk-tailf2}
Continuing on Remark~\ref{rmk-tailf}, we note that
if instead of (H4)(ii) we assume (H4)(iii) in the statement of Theorem~\ref{lemma-HOTn} (with the rest of the assumptions unchanged)
then  $E_n=O(n^{-(\beta-1/2)})$ if the  assumption  on $\mu(\varphi>n)$ stated in  Theorem~\ref{lemma-HOTn}, i) holds
and  $E_n=O(n^{-\beta})$ if the  assumption  on $\mu(\varphi>n)$ stated in  Theorem~\ref{lemma-HOTn}, ii) holds.
\end{rmk}

\section{The abstract framework applied to invertible systems (Markov and non Markov examples).}
\label{subsect-classex}

In this section we present some simple, but non trivial, examples to which our abstract framework easily applies.

The examples  considered are far from being the most general (see Remark \ref{rem:general-ex} for details).   Nevertheless, they are fairly representative for both classes of invertible systems: i) preserving and  ii) lacking a Markov structure.
The properties needed to apply the general theory are established in:
a) Section~\ref{sec-setup} in the Markov case; b) Section~\ref{sec-nonMarkov} in the non Markov case.

Let $f:X\to X$ be an invertible map and  $F:Y\to Y$ its first return  map, for some $Y\subset X$.  As already mentioned we use $\vf$ to designate the return time to $Y$ and we set $Y_n=\{x\in Y\;:\;\vf(x)=n\}$.

The strongest restriction in our examples is given by the requirement that there exists a globally smooth stable foliation. In principle, our methods could be applied to more general cases, but this requires a case by case analysis that does not belong to the present paper.
The requirement that $f$ preserves a global smooth foliation means that there exists a smooth map $\bH(x,y)=(H(x,y),y)$, which can be normalized so that $H(x,0)=x$, such that
\[
f\circ \bH(x,y)=(H(f_0(x),g(x,y)), g(x,y))=\bH(f_0(x),g(x,y)),
\]
for some functions $f_0, g$.
Indeed, the fibre through the point $(x,0)$ can be seen as the graph of the function $H(x,\cdot)$ over the $y$ axis and is mapped by $f$ to the fibre thru the point $(f_0(x),0)$. In other words the map $f$ is conjugated to the skew product
\[
\bH^{-1}\circ f\circ \bH(x,y)=(f_0(x),g(x,y)).
\]
Accordingly, in the following, we will consider only maps of the latter form.

\vspace{-2ex}
\paragraph{Example 1: a set of Markov maps.}

Consider $f:[0,1]^2\to [0,1]^2$,
\begin{align} \label{eq-2DLSV}
f(x,y) =(f_0(x), g(x,y)),
\end{align}
where $f_0$ is the map defined in \eqref{eq-LSV}.
We require that $g(x,[0,1])\subset [0,1/2]$ for $x\in [0,1/2)$ and $g(x,[0,1])\subset [1/2, 1]$ for $x\in (1/2, 1]$,
also there exists $\sigma>0$ such that $|\partial_y g|>\sigma$. This implies that $f$ is an invertible map.
Also we assume that $g$ is $C^2$ when restricted to $A_1=(0,1/2)\times [0,1]$ and $A_2=(1/2,1)\times [0,1]$, also we assume $\|\partial_x g\|_{L^\infty}<\infty$.
Setting $R_i=f(A_i)$, it is possible that the closure of $R_1\cup R_2$ is strictly smaller than $[0,1]^2$.
However, $f$  preserves the Markov structure of the map $f_0$. In particular, the preimage of a vertical segment
$\{x\}\times [0,1]$ consists of two vertical segments of the same type. 

Let $Y=(1/2,1]\times [0,1]$ and $F$ be the first return map to such a set. Obviously, it will have the form
$F(x,y)=(F_0(x), G(x,y))$ where $F_0$ is the return map of $f_0$ to $(1/2,1]$. Clearly $\vf(x,y)=\vf_0(x)$ where $\vf_0$ is the return time of the map $f_0$.  We assume that, where defined,
\begin{equation}\label{eq:stable-contraction}
|\partial_y G|\leq \lambda^{-1}<1.
\end{equation}
This implies that the stable foliation consists of the vertical segments.
Also, we require that there exists $K_0>0$ such that
\begin{equation}\label{eq:unstable-cone}
\frac{|\partial_x G|}{|F_0'|}\leq K_0.
\end{equation}
This implies that the cone $\cC=\{(a,b)\in\bR^2\;:\; |b|\leq K|a|\}$ is invariant for $DF$, provided $K\geq (1-\lambda^{-1})^{-1}K_0$.
This readily implies the existence of an unstable foliation and that it is made by curves that are graphs over the $x$ coordinate. 

We note that condition \eqref{eq:stable-contraction} does not require that $f$ is uniformly contracting in the vertical direction.
If, for example, $|\partial_y g|_\infty\leq 1$ and $\sup_{x\in[1/2,1], y}|\partial_y g(x,y)|<1$, 
then one can easily check that \eqref{eq:stable-contraction} and \eqref{eq:unstable-cone} are satisfied.

Indeed, \eqref{eq:stable-contraction} follows trivially. As for condition \eqref{eq:unstable-cone}, we note that setting $f^n=(f_0^n, g_n)$,
we have that $g_{n+1}(x,y)=g(f_0^n(x), g_n(x,y))$. Hence,  for all $(x,y)\in Y$,
\begin{equation}\label{eq:many-der}
\begin{split}
\partial_x g_{n+1}(x,y)&= (\partial_x g)(x_n,y_n)\cdot (f_0^n)'(x)+\partial_y g(x_n,y_n)\cdot\partial_x g_n(x,y)\\
&= (f_0^n)'(x)\sum_{k=0}^n (\partial_x g)(x_k,y_k)\prod_{j=k+1}^n\frac{(\partial_y g)(x_j,y_j)}{f_0'(x_j)},
\end{split}
\end{equation}
where $(x_n,y_n)=f^n(x,y)$. The above displayed equation together with the distortion properties of $F_0=f_0^{\varphi_0}$
and the fact that $|\partial_y g|_\infty\leq 1$, yields
\[
\begin{split}
\frac{|\partial_x G(x,y)|}{|F_0'(x)|}&\leq C \sum_{k=0}^{\vf_0(x)-1} \prod_{j=k+1}^{\vf_0(x)-1}\frac{|(\partial_y g)(x_j,y_j)|}{f_0'(x_j)}\\
&\leq C \sum_{k=0}^{\vf_0(x)-1} (\vf_0(x)-k)^{-1-\beta}\prod_{j=k+1}^{\vf_0(x)-1}|(\partial_y g)(x_k,y_k)|\leq C.
\end{split}
\]

The above argument shows that there exists a large class of systems satisfying hypotheses \eqref{eq:stable-contraction} and \eqref{eq:unstable-cone}.
For such systems the return map $F$ is uniformly hyperbolic, it has a Markov structure with countably many interval of smoothness,
but neither the derivative nor its inverse is, in general, uniformly bounded.
Moreover,  any SRB invariant measure for $f$ (i.e. any invariant measure absolutely continue with respect to Lebesgue once
restricted to the unstable direction) has a marginal in the $x$ direction that is absolutely continuous with respect
to Lebesgue and must be an invariant measure of $f_0$.
Hence, there exists  a unique (up to scaling) $\sigma$-finite,
absolutely continuous (on the unstable direction) invariant measure $\mu$:  finite if $\alpha\leq 1$ and infinite if 
$\alpha\geq 1$ (equivalently, writing  $\beta:=1/\alpha$, $\mu$ is finite if $\beta>1$ and infinite if $\beta\leq 1$).

\vspace{-2ex}
\paragraph{Example 2: a set of non Markov maps.}
In this case we take  $f_{0}$ to be the one dimensional  topologically mixing map described
at the end of Section~\ref{sec-MISyst}. 
We recall that: i) $f_{0}$ agrees with the definition in the previous example in $[0,\frac 12]$; ii) there exists a finite partition of $(1/2, 1]$ into open intervals $I_p$, $p\geq 1$ such that  $f_{0}$ is  $C^2$ and strictly monotone in each $I_p$; iii)
 $|f_{0}'|>2$ in $(1/2, 1]$; iv) $f_0$ is topologically mixing. Define  $f:[0,1]^2\to [0,1]^2$,
\begin{align} \label{eq-2DnM}
f(x,y) =(f_{0}(x), g(x,y)),
\end{align}
where $g\in C^2$ in $(0,1/2)\times [0,1]$ and  in each $I_p\times [0,1]$. As in Example 1, we ask $0<\sigma\leq |\partial_y g|\leq 1$ and that $g$ is such that $f$ is invertible.

Set $Y=(1/2,1]\times [0,1]$ and let $F$ be the first return map to such a set. So, we can write
$F(x,y)=(F_{0}(x), G(x,y))$ where $F_{0}$ is the return map of $f_{0}$ to  $(1/2, 1]$.
It turns out that, to treat this case, conditions of the type~\eqref{eq:stable-contraction} and~\eqref{eq:unstable-cone}, are not sufficient.
Indeed, if the contraction in the stable direction is much slower than the expansion, then it is unclear what is the reasonable result one should expect. To make things simple we ask that the contraction  overbeats the expansion.  We assume that there exists $C>0$ such that, for almost all $(x,y)\in[0,1]^2$,
\begin{equation}\label{eq:extra-cond0}
|\partial_y G(x,y)|\leq C \vf(x,y)^{-1}|F_0'|^{-1}.
\end{equation}
\begin{rmk} We do not claim that condition \eqref{eq:extra-cond0} is optimal, yet it is not very strong either. In particular, note that if we assume the rather strong condition $\|\partial_yg\|_{\infty}\leq \lambda^{-1}<1$, then the above condition reads $\lambda^{-\vf}\leq C\vf^{-2-\beta}$ which is obviously satisfied.\footnote{ Recall that $F_0'\sim \vf^{1+\beta}$.}
\end{rmk}
In the following we use \eqref{eq:extra-cond0} to obtain certain estimates that will be needed in section \ref{sec-nonMarkov}. In some sense these are the properties that are really needed to apply our results, yet we find condition \eqref{eq:extra-cond0} more appealing to state and simpler to check.

Note that, using  the notation of Example 1, $\partial_y G(x,y)=\prod_{k=0}^{\vf(x,y)-1}\partial_yg(x_k,y_k)$. It follows
\[
\partial^2_y G=\sum_{k=0}^{\vf -1}\frac{\partial^2_yg(x_k,y_k)}{\partial_yg(x_k,y_k)}\prod_{j=0}^{k-1}\partial_yg(x_j,y_j)\prod_{j=0}^{\vf-1}\partial_yg(x_k,y_k).
\]
We cannot take much advantage of the first product, so we bound it by one. The second product yields $\partial_yG$ and, using \eqref{eq:extra-cond0}, we have
\begin{equation}\label{eq:extra-cond1}
\|F_0'\partial_y G\|_\infty+\|F_0'\partial^2_y G\|_\infty\leq C
\end{equation}
Moreover, differentiating \eqref{eq:many-der}, we have
\[
\begin{split}
\partial_y\left(\frac{\partial_x G}{F_0'}\right)=&\sum_{k=0}^{\vf-1}\partial_y\partial_x g(x_k,y_k)\partial_y G\prod_{j=k+1}^{\vf-1}f_0'(x_j)^{-1}\\
&+\sum_{k=0}^{\vf-1}\partial_xg(x_k,y_k)\sum_{l=k+1}^{\vf-1}\frac{\partial_y^2 g(x_l,y_l)}{\partial_y g(x_l,y_l)}
\prod_{j=k+1}^{\vf-1}\frac{\partial_y g(x_j,y_j)}{f_0'(x_j)}\cdot \prod_{s=0}^{l-1}\partial_yg(x_s,y_s).
\end{split}
\]
Recall that $\prod_{j=k+1}^{\vf-1}[f_0'(x_j)]^{-1}\leq C (\vf-k)^{-1-\beta}$, while the products of the $\partial_y g$ can be used to recover $\partial_yG$. Using such facts in the above expression, we obtain
\begin{equation}\label{eq:extra-cond}
\left\|F_0'\partial_y\frac{\partial_ xG}{F_{0}'}(x,\cdot)\right\|_{C^0}\leq C.
\end{equation}

In sections \ref{sec-setup} and \ref{sec-nonMarkov} we verify hypothesis (H1--H5) for the above class of examples. By Corollary~\ref{cor-equilG}  we have then the following result:\footnote{We note that 
Corollary~\ref{cor-equilG} is both: a) a weaker version of  Theorem \ref{cor-equiMTl} since it requires (H4)(iii); b) a stronger version of  Theorem \ref{cor-equiMTl} since it covers the whole range $\beta\in(0,1)$; however, we refer to Subsection~\ref{sec-MISyst} for a discussion of previous results in the non-invertible case in the range $\beta>1/2$ and $\beta\leq 1/2$, respectively.
The same comment applies to Remark~\ref{rmk-tailf2} (used in obtaining Proposition~\ref{thm-mixingrates} below) compared with Theorem~\ref{cor-mixingrates}.}
\begin{prop} \label{thm-mixing-nM}
Assume the setting of maps $f$  of the form~\eqref{eq-2DLSV} or~\eqref{eq-2DnM} described in subsection~\ref{subsect-classex}  with $\beta\in (0,1)$.
Let $v,w:[0,1]^2\to\R$ be $C^{1+q}$,  $q\in (\frac{1+\beta}{2+\beta},1]$,  observables supported on $Y$.  Then, there exists a positive constant
$d_0$ (depending only on the map $f$) such that
\[
\lim_{n\to\infty}n^{1-\beta}\int_{[0,1]^2}  v\, w\circ f^n \, d\mu =d_0 \int_{[0,1]^2}v\, d\mu\int_{[0,1]^2} w\, d\mu.
\] 
\end{prop}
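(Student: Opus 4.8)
The plan is to deduce Proposition~\ref{thm-mixing-nM} directly from Corollary~\ref{cor-equilG} — equivalently, from Theorem~\ref{cor-equiMTl} in the range covered by Lemma~\ref{lemma-FO-Tn-G} — so that the whole task reduces to producing, for the first return map $F$ of a map $f$ of the form \eqref{eq-2DLSV} or \eqref{eq-2DnM}, a pair of Banach spaces of distributions $\cB\subset\cB_w$ supported on $Y$ for which the hypotheses (H1), (H2) with $\ggen=1$, (H3), (H4)(iii) and (H5) hold, and then checking that $C^{1+q}$ functions with $q>\frac{1+\beta}{2+\beta}$ supported on $Y$ belong to the admissible regularity class $C^\alpha$ of (H1)(i). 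Once this is granted, the stated limit holds with $d_0=[\Gamma(1-\beta)\Gamma(\beta)]^{-1}>0$. The actual verification of (H1)--(H5) is carried out in Sections~\ref{sec-setup} (Markov case) and~\ref{sec-nonMarkov} (non Markov case); what follows is the skeleton.

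For the spaces I would use anisotropic spaces of distributions in the spirit of \cite{GL06, DemersLiverani08}: one pairs distributions with $C^q$ test functions along the stable leaves and measures a further amount $p$ of regularity transversally, with $p+q$ below the (piecewise $C^2$) smoothness of $F$. The crucial simplification here is that, by the skew-product normalisation of Section~\ref{subsect-classex}, the stable foliation is globally smooth (it is the vertical foliation, by \eqref{eq:stable-contraction}) and the unstable cone \eqref{eq:unstable-cone} is $DF$-invariant with unstable leaves that are graphs over the $x$-axis; consequently the hyperbolicity, distortion and tail estimates for $F$ are governed by those of the one-dimensional return map $F_0$ of the (generalised) LSV map, which are classical \cite{LiveraniSaussolVaienti99}. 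A Lasota--Yorke inequality for $R$ on $\cB$ together with quasi-compactness gives (H1)(iii)--(iv), the maximal eigenfunction being the SRB measure $\mu$ of $F$; the embeddings in (H1)(i) and the multiplier property (H1)(ii) are built into the construction, and $C^{1+q}\subset C^\alpha$ for the $\alpha$ thereby produced, which is exactly why $v,w$ are required to be $C^{1+q}$. Hypothesis (H3) is just $\mu(\vf>n)=\mu_0(\vf_0>n)\asymp n^{-\beta}$ with $\ell$ asymptotically constant, inherited from $f_0$. Hypothesis (H2) with $\ggen=1$ holds because $Y_n=\{\vf=n\}$ is a union of a uniformly bounded number of full-height vertical strips, so $\Id_{Y_n}$ acts on $\cB$ essentially as a smooth-in-$x$ cutoff and $|\langle\Id_{Y_n}h,1\rangle|\le C\|h\|_{\cB}\,\mu(Y_n)$.

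The two substantial points are (H4)(iii) and (H5). For (H4)(iii) one writes $R_n=R(\Id_{Y_n}\,\cdot\,)$ and estimates the $\cB$-norm branch by branch: a branch of $F$ over $Y_n$ has unstable expansion $|F_0'|\asymp n^{1+\beta}$ and, by \eqref{eq:extra-cond0} (and, in the non Markov case, the distortion bounds \eqref{eq:extra-cond1} and \eqref{eq:extra-cond}), stable contraction $|\partial_y G|\le C n^{-1}|F_0'|^{-1}$; inserting these into the single-branch bound for the anisotropic norm, the contribution of that branch is $\ll n^{-(1+\beta)}$ precisely when $q>\frac{1+\beta}{2+\beta}$, and summing over the boundedly many components of $Y_n$ yields $\|R_n\|_{\cB}\ll n^{-(1+\beta)}$. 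As for (H5): (H5)(i) is the uniform Lasota--Yorke inequality for the twisted operators $R(z)$, $z\in\bar\D$, which follows from the one for $R=R(1)$ because $|z^{\vf}|\le1$ so twisting never worsens the constants (cf.\ \cite{KellerLiverani99}); (H5)(ii) — that $1$ is not in the spectrum of $R(z)$ for $z\in\bar\D\setminus\{1\}$ — is an aperiodicity statement, which holds because $F$ is mixing (for \eqref{eq-2DnM} this uses that $f_0$ is topologically mixing) and $\gcd\{n:\mu(Y_n)>0\}=1$, ruling out modulus-one eigenvalues of $R(z)$ away from $z=1$.

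The main obstacle is the construction itself combined with the sharpness required in (H4)(iii): one must build spaces on which $F$ — uniformly hyperbolic but with derivative and inverse derivative both unbounded, and with a countable (Markov) or genuinely non-Markovian, infinitely refining branch partition — has an honest spectral gap, and then extract the optimal decay $\|R_n\|_{\cB}\ll n^{-(1+\beta)}$ rather than a lossy power. The borderline condition $q>\frac{1+\beta}{2+\beta}$, which runs from $1/2$ to $2/3$ as $\beta$ ranges over $(0,1)$, signals that there is essentially no slack in that norm estimate; once it is in hand, everything else amounts to transcribing the one-dimensional bounds of \cite{LiveraniSaussolVaienti99} into the anisotropic setting, which is the content of Sections~\ref{sec-setup}--\ref{sec-nonMarkov}.
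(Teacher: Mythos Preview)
Your proposal is correct and matches the paper's own approach essentially line for line: the paper states the proposition as an immediate consequence of Corollary~\ref{cor-equilG} once (H1)--(H5) are verified for the return map $F$, and defers that verification to Sections~\ref{sec-setup} and~\ref{sec-nonMarkov}, exactly as you outline. You also correctly locate the origin of the constraint $q>\tfrac{1+\beta}{2+\beta}$ in the (H4)(iii) estimate for the non-Markov spaces (the paper notes in a footnote that this is the only place the condition is used), and your reading of $d_0=[\Gamma(1-\beta)\Gamma(\beta)]^{-1}$ is the one the paper uses.
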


As already mentioned in Section~\ref{sec-mix-rates}, mixing  rates for maps of the form~\eqref{eq-LSV} depend heavily on a good expansion of the tail behavior.
A good tail expansion for the invertible map  $f$ of the form~\eqref{eq-2DLSV}  described in subsection~\ref{subsect-classex} follows immediately from the tail expansion for
the map ~\eqref{eq-LSV} (see subsection~\ref{subs-tail}).

On the other hand, we note that at present it is not clear how to obtain the required tail expansion for the return time  associated with non uniformly expanding, non Markov maps\footnote{In~\cite{MT,Terhesiu12} tail expansions of the return time for the one dimensional version of~\eqref{eq-2DLSV} are obtained by exploiting the fact that
the induced  invariant density is $C^\ell$, $\ell>0$. In the one dimensional version of our  example~\eqref{eq-2DnM}, we only
know that  the density is $BV$.}  such as the one
described at the end of Section~\ref{sec-MISyst}. 
 For precisely this reason (although all our hypotheses (H1)--(H5) are shown to hold for the non Markov map
of  the form~\eqref{eq-2DnM} described in subsection~\ref{subsect-classex}),
the next result provides mixing rates just for the case of~\eqref{eq-2DLSV}.
More precisely, by Remark~\ref{rmk-tailf2} we obtain

\begin{prop} \label{thm-mixingrates}
Assume the setting of maps $f$ of the form~\eqref{eq-2DLSV}  described in subsection~\ref{subsect-classex} with $\beta\in (1/2,1)$.
Let $v,w:[0,1]^2\to\R$ be $C^1$ observables supported on $Y$. Set $q=\max\{ j\geq 0: (j+1)\beta-j>0\}$. Then, there exist real constants
$d_0,\ldots, d_q$ (depending only on the map $f$) such that
\[
\int_{[0,1]^2}  v\, w\circ f^n \, d\mu =(d_0 n^{\beta-1}+d_1 n^{2\beta-2}+\ldots+d_q n^{(q+1)(\beta-1)}) \int_{[0,1]^2}v\, d\mu\int_{[0,1]^2} w\, d\mu
+O(n^{-\beta}).
\] 
\end{prop}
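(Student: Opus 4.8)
The plan is to deduce Proposition~\ref{thm-mixingrates} from the abstract higher–order asymptotics of the operators $T_n$ (Theorem~\ref{lemma-HOTn}, in the (H4)(iii) form of Remark~\ref{rmk-tailf2}) applied to the first return map $F=f^\varphi$ to $Y=(1/2,1]\times[0,1]$, and then to translate the resulting $T_n$–estimate into a correlation estimate exactly as in the proof of Theorem~\ref{cor-equiMTl}. So the proof is essentially an assembly of three ingredients.

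First I would invoke the verification of (H1)--(H5) for maps of the form~\eqref{eq-2DLSV} carried out in Section~\ref{sec-setup}. In particular, since $\varphi(x,y)=\varphi_0(x)$ and the one–dimensional LSV bounds $|(f_0^j)'(y_j)|^{-1}\le Cj^{-(\beta+1)}$ and $\mu(\varphi_0=j)=O(j^{-(\beta+1)})$ hold, one obtains $\|R_n\|_{\cB}\ll n^{-(\beta+1)}$, i.e.\ (H4)(iii), together with (H2) with $\ggen=1$. Moreover, since $\varphi$ depends only on $x$ and the $x$–marginal of the SRB measure $\mu$ of $f$ is the LSV invariant measure $\mu_0$ (with $\mu|_Y=\mu_0|_Y$), we have $\mu(\varphi>n)=\mu_0(\varphi_0>n)$, so the tail expansion for~\eqref{eq-LSV} established in Subsection~\ref{subs-tail} (which uses the $C^\ell$–smoothness of the induced density) gives
\[
\mu(\varphi>n)=c\,n^{-\beta}+b(n)+H(n),
\]
with $c>0$, $nb(n)$ of bounded variation, $b(n)=O(n^{-2\beta})$ and $H(n)=O(n^{-\gamma})$ for some $\gamma>2$. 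As $\beta\in(1/2,1)$, this is precisely the hypothesis of Theorem~\ref{lemma-HOTn}(ii).

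Applying Theorem~\ref{lemma-HOTn}(ii) in the (H4)(iii) form (Remark~\ref{rmk-tailf2}) yields, with $q=\max\{j\ge0:(j+1)\beta-j>0\}$ and constants $d_0,\dots,d_q$ depending only on $f$,
\[
T_n=\big(d_0 n^{\beta-1}+d_1 n^{2\beta-2}+\cdots+d_q n^{(q+1)(\beta-1)}\big)P+D,\qquad \|D\|_{\cB\to\cB_w}=O(n^{-\beta}).
\]
To conclude, I would repeat the argument from the proof of Theorem~\ref{cor-equiMTl}: writing $h$ for the density of $\mu$ (so $P1=h$ and $Pv=h\,\langle v,1\rangle$), assumption (H1)(ii) gives $P(vh)=\big(\int v\,d\mu\big)h$; since $v,w$ are $C^1$ and supported on $Y$ (hence $v\in C^\alpha$, $w\in C^\gamma$ with $\alpha,\gamma$ as in (H1)(i)), Remark~\ref{rmk-Ident} identifies $\Id_Y L^n(vh)=\Id_Y L^n\Id_Y(vh)=T_n(vh)\in\cB_w$, whence
\[
\int_{[0,1]^2}\! v\,w\circ f^n\,d\mu=\langle L^n(vh),w\rangle=\langle T_n(vh),w\rangle
=\Big(\sum_{j=0}^{q}d_j n^{(j+1)(\beta-1)}\Big)\Big(\int v\,d\mu\Big)\langle h,w\rangle+\langle D(vh),w\rangle ,
\]
with $\langle h,w\rangle=\int w\,d\mu$ and $|\langle D(vh),w\rangle|\le\|D\|_{\cB\to\cB_w}\|vh\|_{\cB}\|w\|_{(\cB_w)'}\ll\|v\|_{C^\alpha}\|w\|_{C^\gamma}\,n^{-\beta}$, using (H1)(ii) and $C^\gamma\subset(\cB_w)'$. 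This is the asserted estimate.

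The main obstacle is not this deduction, which is bookkeeping, but the two inputs it rests on: (a) the verification of (H1)--(H5) in Section~\ref{sec-setup} — most delicately, the construction of an anisotropic Banach space $\cB$ of distributions adapted to the return map $F$ (whose derivative and its inverse are unbounded, the contraction along the vertical fibres not being uniform) for which the spectral hypotheses (H1)(iv), (H5) hold; and (b) the higher–order tail expansion with $nb(n)$ of bounded variation and error $O(n^{-\gamma})$, $\gamma>2$, which for~\eqref{eq-2DLSV} is inherited from~\eqref{eq-LSV} through Subsection~\ref{subs-tail} but genuinely uses the extra smoothness of the LSV induced density — and is precisely what is unavailable for the non-Markov example~\eqref{eq-2DnM}, where only $BV$ regularity of the density is known, which is why the proposition is stated only for~\eqref{eq-2DLSV}.
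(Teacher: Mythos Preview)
Your proposal is correct and follows essentially the same approach as the paper: the paper simply states that Proposition~\ref{thm-mixingrates} follows ``by Remark~\ref{rmk-tailf2}'', and what you have written is precisely an unpacking of that remark---verifying (H1)--(H5) with (H4)(iii) and (H2) with $\ggen=1$ via Section~\ref{sec-setup}, inheriting the tail expansion of Theorem~\ref{lemma-HOTn}(ii) from the one-dimensional LSV map via Subsection~\ref{subs-tail}, and then passing from the $T_n$--asymptotics to correlations exactly as in the proof of Theorem~\ref{cor-equiMTl}. Your closing discussion of why the argument is restricted to~\eqref{eq-2DLSV} also matches the paper's own explanation.
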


\begin{rmk}
 The above mixing rate is optimal and matches the results on mixing rates in~\cite{MT,
Terhesiu12} for maps of the form~\eqref{eq-LSV}. 
\end{rmk}

In the finite measure setting we note that~\cite[Theorem 1.1]{Gouezel04} together with the verification of our hypothesis (H1) and (H5) and (H4)(iii) ( see subsection~\ref{sec-decay}), yields 
\begin{prop} \label{thm-mixing-nM}
Assume the setting of maps $f$ of the form~\eqref{eq-2DLSV} or~\eqref{eq-2DnM} described in subsection~\ref{subsect-classex}  with $\beta>1$.
Let $v,w:[0,1]^2\to\R$ be $C^{1+q}$,  $q\in (\frac{1+\beta}{2+\beta},1]$,  observables supported on $Y$. 
Then
\[
\lim_{n\to\infty} n^{1-\beta}\left|\int_{[0,1]^2}  v\, w\circ f^n \, d\mu -  \int_{[0,1]^2}v\, d\mu\int_{[0,1]^2} w\, d\mu, d\mu \right|=c_0  \int_{[0,1]^2}v\, d\mu\int_{[0,1]^2} w\, d\mu
\] 
where $c_0n^{-\beta+1}=\sum_{k>n}\mu(Y_k)$ for $n$ large enough.
\end{prop}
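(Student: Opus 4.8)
The plan is to read the statement off from Corollary~\ref{cor-dec}, which is the abstract–Banach-space form of~\cite[Theorem 1]{Gouezel04} and is already phrased for the original transformation $f$ with observables supported on $Y$. Exactly as the analogous $\beta\in(0,1)$ statement is obtained from Corollary~\ref{cor-equilG}, the present one follows once hypotheses (H1), (H5) and (H4)(iii) are verified for the first return map $F=f^{\varphi}$ to $Y=(1/2,1]\times[0,1]$ and once $\mu(\varphi>n)=O(n^{-\beta})$ with $\beta>1$ holds, together with the precise first-order tail asymptotic that defines the constant $c_0$ (so that, after normalising the absolutely continuous $f$-invariant measure to a probability, Corollary~\ref{cor-dec} yields a genuine decay-of-correlations statement; here $\beta>1$ is what guarantees finiteness of that measure). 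The regularity hypothesis $v,w\in C^{1+q}$ with $q\in(\frac{1+\beta}{2+\beta},1]$ is precisely the $C^{\alpha}$ requirement of (H1)(i) for the anisotropic spaces $\cB\subset\cB_w$ built for these two families, so that $vh\in\cB$ with $h$ the $F$-invariant density. Thus the whole task reduces to checking those four inputs.

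Hypotheses (H1) and (H5) are verified in Section~\ref{sec-setup} for maps of the form~\eqref{eq-2DLSV} and in Section~\ref{sec-nonMarkov} for those of the form~\eqref{eq-2DnM}: the point is that $F$ is uniformly hyperbolic — by~\eqref{eq:stable-contraction} the stable foliation is the family of vertical fibres, and by~\eqref{eq:unstable-cone} (respectively by~\eqref{eq:extra-cond0}) the cone field is $DF$-invariant, so unstable leaves are graphs over the $x$-axis — and the distribution spaces $\cB\subset\cB_w$ constructed there in the spirit of~\cite{GL06,DemersLiverani08} realise $R:\cB\to\cB$ with a spectral gap together with the Lasota--Yorke bounds (H5)(i) and the peripheral-spectrum property (H5)(ii). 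None of this sees $\beta$: $F$ is uniformly hyperbolic for every $\beta>0$ and $\beta$ enters only through the tail of $\varphi$, so the verification is verbatim the one performed in Sections~\ref{sec-setup}--\ref{sec-nonMarkov} for $\beta<1$. For (H4)(iii): since $\varphi(x,y)=\varphi_0(x)$, one has $\mu(Y_n)\le C n^{-(\beta+1)}$ and $|(f_0^n)'(y_n)|^{-1}\le C n^{-(\beta+1)}$ on $Y_n$ (cf.\ the discussion following~\eqref{eq-LSV}); inserting these bounds, together with the stable-direction estimates~\eqref{eq:stable-contraction}/\eqref{eq:extra-cond0} (which only improve the anisotropic norm), into $R_n v=R(\Id_{Y_n}v)$ gives $\|R_n\|_{\cB}\ll n^{-(\beta+1)}$, as in the relevant lemmas of Sections~\ref{sec-setup} and~\ref{sec-nonMarkov}; since $\beta>1$ this yields $\sum_n n\|R_n\|_{\cB}<\infty$, so $z\mapsto R(z)$ is $C^1$ near $1$ and the hypotheses of~\cite[Theorem 1]{Gouezel04} are met. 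Finally, because $\varphi$ factors through the $x$-coordinate and the $x$-marginal of $\mu$ is the $f_0$-invariant absolutely continuous measure, $\mu(\varphi>n)=\mu_{f_0}(\varphi_0>n)$; as $f_0$ agrees with~\eqref{eq-LSV} on $[0,\frac12]$, the neighbourhood of the indifferent fixed point that governs the tail, one gets $\mu(Y_n)\sim c\,n^{-(\beta+1)}$ and hence $\mu(\varphi>n)=O(n^{-\beta})$ with the precise asymptotic defining $c_0$.

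The one genuinely substantial ingredient is the construction of $\cB,\cB_w$ and the verification of (H1) and (H5) for a return map $F$ whose derivative and inverse need not be uniformly bounded and which is only piecewise smooth (countably many branches in the Markov case, the countable LSV-branch structure in the non-Markov case); this is the content of Sections~\ref{sec-setup} and~\ref{sec-nonMarkov} and is used here as a black box. Within the present argument the only delicate point is that the \emph{first-order} tail asymptotic $\sum_{k>n}\mu(Y_k)\sim c_0\,n^{1-\beta}$ — not merely the $O$-bound — is available also for the non-Markov family~\eqref{eq-2DnM}; it is, precisely because $\varphi=\varphi_0\circ\pi_x$ so the tail is inherited from the one-dimensional parabolic map, even though (as remarked before Proposition~\ref{thm-mixingrates}) the \emph{higher}-order tail expansion needed for sharp mixing rates is not available for~\eqref{eq-2DnM} — which is exactly why the sharp-rate statement is confined to~\eqref{eq-2DLSV}.
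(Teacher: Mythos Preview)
Your proposal is correct and follows exactly the route the paper takes: the proposition is stated (without a formal proof) as a direct consequence of Corollary~\ref{cor-dec} once (H1), (H5) and (H4)(iii) are supplied by Sections~\ref{sec-setup} and~\ref{sec-nonMarkov}, together with the tail behaviour of $\varphi=\varphi_0\circ\pi_x$ inherited from the one-dimensional map. One small imprecision: the regularity $C^{1+q}$ is not the $C^\alpha$ of (H1)(i) (in both constructions $\alpha=1$) but rather the $C^\gamma$ needed so that $w$ acts as a test functional on $\cB_w\subset (C^{1+q})'$ in the non-Markov space; requiring both $v,w\in C^{1+q}$ is simply the stronger of the two conditions and covers both families at once.
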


\begin{rmk}\label{rem:general-ex}
As already mentioned, the above classes of examples are not the most general possible even in the present restrictive case in which a smooth global stable foliation is present. They have been chosen as a reasonable compromise between generality and simplicity of exposition, with the aim of showing how the general theory developed in the next section can be applied to concrete examples. Yet, here is a word on more general possibilities.
First of all the case in which the global is only H\"older should be amenable to similar treatment (only now one does not want to perform the change of variable to obtain a skew product). 
Next, note that there is no reason why the contracting direction should be one dimensional, maps with higher dimensional stable manifolds can be treated in exactly the same manner. Also, one can consider the case in which the expanding direction is higher dimensional. The Markov case would be essentially identical.
In the finite partition non Markov case, one could model the Banach space on higher dimensional bounded variation functions or spaces of generalized variation (see \cite{Keller85, Saussol}). Note however that, as already mentioned, this poses non trivial problems already in the expanding case.
Provided some appropriate technical condition on the image of the partition is satisfied,  the case of (not necessarily Markov) countable partitions can also be treated.
But in the latter case, one would have to use the arguments put forward in \cite{Saussol, liverani11} to prove the relevant spectral properties for the return map.
\end{rmk}

\section{Banach spaces estimates in the Markov case}
\label{sec-setup}

To employ our general framework we must introduce an appropriate Banach space in which to analyze the transfer operators. This is the purpose of the present section.

More precisely, let $f:[0,1]^2\to [0,1]^2$ be  the map~\eqref{eq-2DLSV}  described in subsection~\ref{subsect-classex}.  Let $Y_0=Y=(1/2,1]\times [0,1]$ and let  $\varphi:Y\to\bN$ be the return time to $Y$.
Let $F=f^\varphi$  be the first return map. We show that  $F$ satisfies (H1--H5)  for some appropriate function spaces $\cB, \cB_w$ constructed in analogy with~\cite{DemersLiverani08}. We start by describing the spaces $\cB, \cB_w$.

\subsection{Notation and definitions}\label{subsec-admleaves}

It is convenient to introduce the notation $F^n=(F_0^n,G_n)$, for all $n\in\bN$.
The properties of $F$ can be understood in terms of the map $F_0=f_0^{\varphi_0}$, where 
$\varphi_0$ is the first return time of $f_0$ to $X_0=(1/2,1]$. For $j\geq 1$, set $(x_{j-1},x_j]=X_j=\{\varphi_0=j\}$.  For $j\geq 1$ set $Y_j=  X_j\times [0,1]$. So, $Y_j=\{\varphi=j\}$. For all $j\geq 1$, let 
$ f^jY_j=Y_j'=\{(x,y)\in X_0\times [0,1]\;:\; y\in K_j(x)\} $ for some collection of intervals $K_j(x)\subset [0,1]$. Hence, we can write $F=f^j:Y_j\to Y_j'$. 

 For $n\geq 0$, let  $\cY_n=\{Y_{n,j}\}$ be the corresponding partition of $Y$ associated with $(Y, F^n)$.  Since $F$ is  invertible, we have 
$F^{-n}(\{Y_{n,j}'\})=\{Y_{n,j}\}$.  The map $F^n$ is smooth in the interior of each 
element of the partition $\cY_n$.

\vspace{-1ex}

\paragraph{Admissible leaves:}

We start by introducing a set of {\em admissible leaves} $\Sigma$. Such leaves consists of full vertical segments $W$. A full vertical segment $W(x)$, based at the point $x\in [0,1]$, is given by $\bG_x(t)=(x,t)$, $t\in [0,1]$. The definition of the set of admissible leaves differs with the one in \cite{DemersLiverani08} and allows for a considerable simplification of the arguments. Yet, it is possible only due to the (very special) fact that the map is a skew product.

\vspace{-1ex}
\paragraph{Uniform contraction/expansion, distortion properties:}

Given the simple structure of the stable leaves it is convent to introduce the projection on the second co-ordinate $\pi:[0,1]^2\to [0,1]$ defined by $\pi(x,y)=y$.

By hypothesis \eqref{eq:stable-contraction} we can chose $\lambda>1$ such that:

\begin{itemize}
\item If  $x,y\in W$, $W\in\Sigma$ then $|\pi(F^nx)-\pi(F^ny)|\leq C\lambda^{-n}$. 
\end{itemize}

For any $(x,y)\in Y\in\mathcal{Y}_n$, $|\det(DF^n(x,y))|= (F_0^n)'(x)\cdot \partial_y G_n(y)$.

It is well known that there exists $C>0$ such that, for each $(x,y), (x',y')\in \cY_0$, 
\begin{equation}\label{eq-dist}
\Big|\frac{|F_0'(x)|}{|F_0'(x')|}-1\Big|\leq C |x-x'|.
\end{equation}
In fact, more is true,
\begin{equation}\label{eq-dist-2}
\Big| \frac d{dx}(F_0'(x))^{-1}\Big|\leq C |F_0'(x)^{-1}|.
\end{equation}
\vspace{-1ex}
\paragraph{Test functions:}
In what follows, for $W\in\Sigma$ and $q\leq 1$ we denote by $C^q(W,\bC)$ the Banach space of complex
valued functions on $W$ with  H\"older exponent $q$ and norm
\[
|\phi|_{C^q(W,\bC)}=\sup_{z\in W} |\vf(z)|+\sup_{z,w\in W}\frac{|\vf(z)-\vf(w)|}{|z-w|^q}.
\]
Note that $C^q(W(x),\bC)$ is naturally isomorphic to $C^q([0,1],\bC)$ via the identification of the domain given by $t\to (x,t)$. 
In the following we will use implicitly such an identification, in particular for $\phi\in C^q([0,1],\bC)$ we still call $\phi$ the corresponding function in $C^q(W(x),\bC)$ and we write
\[
\int_{W(x)} h\phi\, dm=\int_{0}^{1} h(x,t)\phi(t) dt.
\]
\begin{rmk}
Note that we use $m$ both for the one dimensional and two dimensional Lebesgue measure. Also, in the following we will often suppress $dm$ as this does not create any confusion.
\end{rmk}
\vspace{-1ex}
\paragraph{Definition of the norms:}
Given $h\in C^1(Y,\C)$, define the \emph{weak norm} by
\begin{equation}\label{eq-weaknorm}
\|h\|_{\cB_w}:=\sup_{W\in\Sigma}\;\sup_{|\phi|_{ C^1(W,\bC)}\leq 1 }\int_W h\phi\, dm.
\end{equation}

Given $q\in [0,1)$ we define the \emph{strong stable norm} by

\begin{equation}\label{eq-strongnormst}
\|h\|_s:=\sup_{W\in\Sigma}\;\sup_{|\phi|_{ C^q(W,\bC)}\leq 1 }\int_W h\phi\, dm.
\end{equation}

Finally we define the \emph{strong unstable norm} by
\begin{equation}\label{eq-strongnormunst}
\|h\|_u:=C^{-1}\sup_{x,y\in [1/2,1]}
\sup_{|\phi|_{C^1}\leq 1}\frac{1}{|x-y|}\left|\int_{W(x)} h\phi\, dm
-\int_{W(y)} h\phi\, dm\right|.
\end{equation} 
Finally, the \emph{strong norm} is defined by $\|h\|_{\cB}=\|h\|_s+\|h\|_u$.

\paragraph{Definition of the Banach spaces:}
We will see briefly that $\|h\|_{\cB_w}+\|h\|_{\cB}\leq C\|h\|_{C^1}$. We then define $\cB$ to be the completion of $C^1$ in the strong norm and $\cB_w$ to be the completion in the weak norm.

The spaces $\cB$ and $\cB_w$ defined above are simplified versions of functional space defined
in~\cite{DemersLiverani08} (adapted to the setting of~\eqref{eq-2DLSV}).
The main difference in the present setting is the simpler definition of admissible leaves and the absence of a control on short leaves. The latter is necessary and possible since the discontinuities do no satisfy any transversality condition while, instead, they enjoy some form of Markov structure.  

\subsection{Embedding properties: verifying (H1)(i)}

The next result shows that (H1)(i) holds for  $\alpha= \gamma=1$ and $\cB, \cB_w$ as described above.

\begin{lemma}\label{lemma-embed} For all $q\in (0,1)$ in definition \eqref{eq-strongnormst} we have\footnote{ Here the inclusion is meant to signify a continuous embedding of Banach spaces.}
\[
C^1\subset \cB\subset \cB_w\subset(C^1)'.
\]
Moreover, the unit ball of $\cB$ is relatively compact in $\cB_w$.
\end{lemma}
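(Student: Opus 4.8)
The plan is to establish the chain of continuous embeddings $C^1\subset\cB\subset\cB_w\subset(C^1)'$ together with the relative compactness statement, proceeding one inclusion at a time.

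\textbf{Step 1: $\cB_w\subset(C^1)'$.} Since $\cB_w$ is by definition the completion of $C^1(Y,\C)$ in the weak norm, it suffices to show that the identity map $(C^1,\|\cdot\|_{\cB_w})\to(C^1)'$ extends continuously. Given $h\in C^1$, I would define a functional on $C^1(Y,\C)$ by testing against full vertical leaves: write, for $\psi\in C^1(Y)$, $\langle h,\psi\rangle=\int_Y h\psi\,dm=\int_{1/2}^1\Big(\int_{W(x)}h(x,\cdot)\psi(x,\cdot)\,dm\Big)dx$. For each fixed $x$, the inner integral is bounded by $\|h\|_{\cB_w}\,|\psi(x,\cdot)|_{C^1(W(x))}\le \|h\|_{\cB_w}\,|\psi|_{C^1(Y)}$, using the definition \eqref{eq-weaknorm} of the weak norm. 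Integrating in $x$ over an interval of length $1/2$ gives $|\langle h,\psi\rangle|\le \tfrac12\|h\|_{\cB_w}|\psi|_{C^1(Y)}$, so $h$ defines an element of $(C^1)'$ with norm controlled by $\|h\|_{\cB_w}$; this extends to the completion.

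\textbf{Step 2: $\cB\subset\cB_w$.} This is essentially immediate from the definitions: for $q\in(0,1)$ we have $|\phi|_{C^q(W)}\le|\phi|_{C^1(W)}$ (a $C^1$ function on a bounded interval is $C^q$ with $q$-Hölder constant bounded by its $C^1$ norm, since $|z-w|^{1-q}\le 1$ on $[0,1]$), hence the supremum defining $\|h\|_s$ in \eqref{eq-strongnormst} is taken over a larger set of test functions than that defining $\|h\|_{\cB_w}$ in \eqref{eq-weaknorm}, giving $\|h\|_{\cB_w}\le\|h\|_s\le\|h\|_{\cB}$. So the inclusion of the dense subspaces is norm-decreasing and extends to the completions.

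\textbf{Step 3: $C^1\subset\cB$.} I must show $\|h\|_{\cB}=\|h\|_s+\|h\|_u\le C\|h\|_{C^1(Y)}$ (which also legitimizes viewing $C^1$ inside the completion $\cB$). For $\|h\|_s$: $\int_W h\phi\,dm\le|h|_\infty|\phi|_{C^0}\cdot|W|\le\|h\|_{C^1}$ when $|\phi|_{C^q(W)}\le1$. For $\|h\|_u$: for two base points $x,y$, write $\int_{W(x)}h\phi-\int_{W(y)}h\phi=\int_0^1\big(h(x,t)-h(y,t)\big)\phi(t)\,dt$, and since $h\in C^1(Y)$, $|h(x,t)-h(y,t)|\le\|\partial_x h\|_\infty|x-y|$; dividing by $|x-y|$ and taking suprema over $|\phi|_{C^1}\le1$ gives $\|h\|_u\le C^{-1}\|\partial_xh\|_\infty\le C^{-1}\|h\|_{C^1}$. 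Summing yields the claim.

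\textbf{Step 4: relative compactness of the unit ball of $\cB$ in $\cB_w$.} This is the main obstacle and the only part requiring real work. The standard strategy (as in \cite{DemersLiverani08, GL06}) is: take a sequence $\{h_n\}$ with $\|h_n\|_{\cB}\le1$; for each fixed leaf $W(x)$, the functionals $\phi\mapsto\int_{W(x)}h_n\phi$ are bounded on $C^q(W(x))$ uniformly in $n$, and $C^1(W)$ embeds compactly into $(C^q(W))'$ — more precisely, one exploits that the strong unstable norm gives \emph{equicontinuity in $x$} of the maps $x\mapsto\int_{W(x)}h_n\phi\,dm$ (Hölder-$1$ in $x$), while the strong stable norm with exponent $q$ gives, for each $x$, precompactness in the weaker $C^1$-dual topology on leaves (interpolation: a set bounded in the $C^q$-dual norm is precompact in the $C^1$-dual norm, since the unit ball of $C^1(W)$ is compact in $C^q(W)$ by Arzelà–Ascoli). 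Combining via a diagonal argument over a countable dense set of base points $x$ and the equicontinuity in $x$ yields a subsequence Cauchy in $\|\cdot\|_{\cB_w}$. I would carry this out by: (a) fixing a test function $\phi$ with $|\phi|_{C^1}\le1$ and showing $x\mapsto\int_{W(x)}h_n\phi$ is uniformly (in $n$) Hölder-$1$ hence, after passing to a subsequence, uniformly convergent on $[1/2,1]$ by Arzelà–Ascoli; (b) upgrading to uniformity over all such $\phi$ using a compactness argument on the test-function side (the unit ball of $C^1(W)$ is precompact in $C^q(W)$, and $\|h_n\|_s\le1$ controls the oscillation when $\phi$ varies within a small $C^q$-ball); (c) concluding that the subsequence is $\|\cdot\|_{\cB_w}$-Cauchy. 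The delicate point is interchanging the two suprema (over leaves and over test functions) while only having $C^q$-control, $q<1$, in the stable direction — this is exactly why the strong stable norm uses exponent $q$ strictly less than the exponent $1$ appearing in the weak norm, and it is the mechanism that produces genuine compactness rather than mere boundedness.

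Finally I would remark that because the admissible leaves here are simply full vertical segments and there is no short-leaf bookkeeping, the argument is substantially shorter than in \cite{DemersLiverani08}; in particular step (a) uses only the one-dimensional distortion bounds \eqref{eq-dist}–\eqref{eq-dist-2} are not even needed here, just the definition of $\|\cdot\|_u$, and step (b) reduces to Arzelà–Ascoli on $[0,1]$.
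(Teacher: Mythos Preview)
Your proposal is correct and follows essentially the same route as the paper: Steps~1--3 match the paper's argument almost verbatim (the paper delegates Step~1 to its Proposition~\ref{prop-embed}, which is exactly your Fubini computation), and Step~4 uses the same two ingredients---equicontinuity in the base point coming from $\|\cdot\|_u$, and precompactness of the $C^1$ unit ball in $C^q$ via Arzel\`a--Ascoli---only phrased as a sequential/diagonal argument where the paper opts for the equivalent totally-bounded formulation with an explicit finite-dimensional map $K_\ve:\cB_w\to\bR^{N_\ve M_\ve}$.
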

\begin{proof}
By the definition of the norms it follows that $\|\cdot\|_{\cB_w}\leq \|\cdot\|_s\leq\|\cdot\|_{\cB}$, from this the inclusion $\cB\subset \cB_w$ follows. 
For each $h\in C^1$ we have
\[ 
\left|\int_{W(x)} h\phi-\int_{W(y)} h\phi\right|=\left|\int_0^1 h(x,t)\phi(t) dy-\int_0^1 h(y,t)\phi(t)\right|\leq C\|h\|_{C^1}\|\phi\|_{C^0}|x-y|.
\]
The above implies $\|h\|_u\leq C \|h\|_{C^1}$. Thus $C^1\subset \cB$.

The other inclusion is an immediate consequence of Proposition \ref{prop-embed}, an analogue of~\cite[Lemma 3.3]{DemersLiverani08}.
The injectivity follows from the injectivity of the of the standard inclusion of $C^1$ in $(C^1)'$.

Finally, we want to show that the unit ball $B_1$ of $\cB$ has compact closure. To this end it suffices to show that it is totally bounded, i.e. for each $\ve>0$, it can be covered by finitely many $\ve$-balls in the $\cB_w$ norm. 

For each $\ve>0$ let $\cN_\ve=\{x_i=i\ve\}_{i=1}^{\ve^{-1}}\subset [0,1]$. Hence, for each admissible leaf $W$ and test function $\phi$ there exists $x_i\in\cN_\ve$ such that
\[
\left|\int_Wh\phi-\int_{W(x_i)}h\phi \right|\leq \ve\|h\|_u.
\]
On the other hand, by Ascoli-Arzela, for  each $\ve$ there exist finitely many $\{\phi_i\}_{i=1}^{M_\ve}\subset C^1([0,1],\bC)$ which is $C^q$ $\ve$-dense in the unit ball of $C^1$.
Accordingly, for each $\phi$ such that $\|\phi\|_{C^1}\leq 1$ we have that there exists $\phi_j$ such that
\begin{equation}\label{eq:compact-short}
\left|\int_Wh\phi-\int_{W(x_i)}h\phi_j \right|\leq C \ve\|h\|\,\|\phi\|_{C^1}.
\end{equation}
Let $K_\ve:\cB_w\to\bR^{N_\ve M_\ve}$ be defined by $[K_\ve(h)]_{i,j}=\int_{W(x_i)}h\phi_j $. Clearly $K_\ve$ is a continuous map.
Since the image of the unit ball $B_1$ under $K_\ve$ is contained in $\{a\in \bR^{N_\ve M_\ve}\;:\; |a_{ij}|\le 1\}$, it has a compact closure hence there exists finitely many $a^k\in \bR^{N_\ve M_\ve}$ such that the sets
\[
U_{k,\ve}=\left\{h\in\cB\;:\; \left|\int_{W(x_i)}h\phi_j-a^k_{i,j}\right|\leq \ve, \,\forall i,j\right\}
\]
cover $B_1$. To conclude note that if $h_1,h_2\in U_{k,\ve}$, then, by \eqref{eq:compact-short}, there exists $i,j$ such that
\[
\begin{split}
\left|\int_W(h_1-h_2)\phi\right|&\leq \left|\int_{W(x_{i})}(h_1-h_2)\phi_{j}\right|+C\ve\|h_1-h_2\|_{\cB}\\
&\leq \left|\int_{W(x_{i})}h_1\phi_{j}-a^k_{i,j}\right|+\left|\int_{W(x_{i})}h_2\phi_{j}-a^k_{i,j}\right|+2C\ve\\
&\leq 2(C+1)\ve.
\end{split}
\]
This means that each $U_{k,\ve}$ is contained in a $2(C+1)\ve$-ball in the $\cB_w$ norm, hence the claim.
\end{proof}
\begin{prop}\label{prop-embed}Let $I$ be an interval, $I\subset [0,1]$ and set $E=I\times [0,1]$. Then for all $\phi\in C^1$
and for all $h\in\cB_w$, we have $\Id_Eh\in \cB_w$ and
\[
|\langle\Id_E h\phi\rangle|\leq  \|h\|_{\cB_w} \|\phi\|_{C^1} m(E).
\]
\end{prop}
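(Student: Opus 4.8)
The plan is to prove the inequality first for $h\in C^1$, where all the integrals involved are genuine Lebesgue integrals and everything is classical, and then to extend it to an arbitrary $h\in\cB_w$ by density, using a contraction estimate for the weak norm. So, phase one treats $C^1$ functions and contains the actual computation; phase two is a routine completion argument whose only subtle point is giving a meaning to $\Id_E h$ when $h$ is merely a distribution.

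For the $C^1$ case I would argue as follows. Fix $h\in C^1$ and $\phi\in C^1$. Since $\Id_E h$ is then a bounded measurable function, Fubini's theorem gives $\langle\Id_E h,\phi\rangle=\int_E h\phi\,dm=\int_I\big(\int_0^1 h(x,t)\phi(x,t)\,dt\big)\,dx$. For each fixed $x\in I$ the full vertical segment $W(x)$ is an admissible leaf (an element of $\Sigma$), and, under the identification $C^1(W(x),\bC)\cong C^1([0,1],\bC)$, the function $\phi(x,\cdot)$ is a test function on $W(x)$ with $|\phi(x,\cdot)|_{C^1(W(x),\bC)}\le\|\phi\|_{C^1}$ (its supremum is bounded by $\|\phi\|_{C^0}$ and its Lipschitz constant in the vertical direction by $\|\partial_y\phi\|_{C^0}$). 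Hence, directly from the definition \eqref{eq-weaknorm} of the weak norm, $\big|\int_0^1 h(x,t)\phi(x,t)\,dt\big|=\big|\int_{W(x)}h\,\phi(x,\cdot)\,dm\big|\le\|h\|_{\cB_w}\|\phi\|_{C^1}$. Integrating this bound over $x\in I$ yields $|\langle\Id_E h,\phi\rangle|\le\|h\|_{\cB_w}\|\phi\|_{C^1}\,m(I)=\|h\|_{\cB_w}\|\phi\|_{C^1}\,m(E)$, which is the assertion for $h\in C^1$.

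To pass to general $h\in\cB_w$ I would first note that the same leaf-by-leaf computation gives $\int_{W(x)}\Id_E h\,\phi\,dm=\Id_I(x)\int_{W(x)}h\phi\,dm$, so that $\|\Id_E h\|_{\cB_w}\le\|h\|_{\cB_w}$ for $h\in C^1$; in other words multiplication by $\Id_E$ is a contraction of the weak norm on the dense subspace $C^1$, hence extends uniquely to a bounded operator on the completion $\cB_w$, which is what I would take as the definition of $\Id_E h$ for $h\in\cB_w$. Choosing $h_n\in C^1$ with $\|h_n-h\|_{\cB_w}\to0$ one has $\Id_E h_n\to\Id_E h$ in $\cB_w$, hence in $(C^1)'$, and passing to the limit in the inequality already proved for each $h_n$ gives the claimed bound for $h$; taking $I=[0,1]$ then recovers the embedding $\cB_w\subset(C^1)'$ used in Lemma~\ref{lemma-embed}. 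The only point that really needs care is precisely this last step — making sense of $\Id_E h$ for a distribution $h$ — and it works here only because the boundary of $E$ is a union of full vertical segments, i.e. of admissible leaves, so that $\Id_E$ never truncates a leaf in its interior; all remaining ingredients are Fubini's theorem and the supremum defining $\|\cdot\|_{\cB_w}$.
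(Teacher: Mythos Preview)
Your argument follows the same strategy as the paper: prove the inequality for $h\in C^1$ via Fubini and the leaf-wise definition of $\|\cdot\|_{\cB_w}$, then extend to $\cB_w$ by density. The one step you skip, and which the paper handles separately, is showing that $\Id_E h$ actually lies in $\cB_w$ when $h\in C^1$. Bounding $\|\Id_E h\|_{\cB_w}\le\|h\|_{\cB_w}$ is not enough: $\cB_w$ is the \emph{closure} of $C^1$ in the weak norm, so a finite weak norm does not by itself give membership, and without $\Id_E(C^1)\subset\cB_w$ your ``bounded operator extends to the completion'' step need not produce an operator with values in $\cB_w$. The paper addresses this point by first proving the more general estimate $\bigl|\int hg\phi\bigr|\le\|h\|_{\cB_w}\,\|\phi\|_{C^1}\,\|g\|_{L^1}$ for any bounded $g$ depending on $x$ alone, and then approximating $\Id_E$ by smooth $g_n$ (depending only on $x$) so that $g_n h\in C^1$ and $g_n h\to\Id_E h$; this is what justifies $\Id_E h\in\cB_w$ before the density argument is invoked.
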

\begin{proof}
Let us start by considering $h\in C^1$ and let $g\in L^\infty$ such that $\partial_y g=0$. By Fubini theorem and the fact that the vertical segments are admissible leaves, for each $\phi\in C^1$ we have
\[
\left|\int  h g\phi\, \right|\leq\int dx |g(x)|\left|\int_0^1 dy h(x,y)\phi(x,y)\right|\leq \|h\|_{\cB_w}\|\phi\|_{C^1} \|g\|_{L^1}.
\]
In particular, this implies that, choosing $\{g_n\}\subset C^1$ such that $g_n\to \Id_E$ in $L^1$, $\|\Id_E h-g_nh\|_{\infty}\to 0$, i.e. $\Id_E h\in\cB_w$. Moreover, 
\[
|\langle\Id_E h\phi\rangle|\leq \|h\|_{\cB_w}\|\phi\|_{C^1} m(E).
\]
In other words, if we view the multiplications by $\Id_E$ as an operator on $C^1\subset\cB_w$, then $\|\Id_E h\|_{\cB_w}\leq \|h\|_{\cB_w}$, that is $\Id_E$ is a bounded operator. It can then be extended uniquely to a bounded operator on $\cB_w$ by the same norm (and name).
\end{proof}

\subsection{ Transfer operator: definition}
If $h\in L^1$, then
$R:L^1\to L^1$ acts on $h$ by
\[
\int Rh\cdot v=\int h\cdot v\circ F,\quad v\in L^\infty.
\]
By a change of variable we have
\begin{equation}\label{eq:transfer-L1}
Rh=\Id_{F(Y)} h\circ F^{-1}\det(DF^{-1}).
\end{equation}
Note that, in general, $R C^1\not\subset C^1$, so it is not obvious that the operator $R$ has any chance of being well defined in $\cB$.
The next Lemma addresses this problem.

\begin{lemma}\label{lem:R-well-def}
With the above definition, $R(C^1)\subset \cB$.
\end{lemma}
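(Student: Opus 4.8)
The plan is to show that for any $h\in C^1(Y,\C)$, the function $Rh$ given by~\eqref{eq:transfer-L1} has finite strong norm $\|Rh\|_\cB = \|Rh\|_s + \|Rh\|_u < \infty$. The point is that $Rh$ need not be $C^1$ across the countably many discontinuity lines $\{x=x_j\}$ coming from the partition $\{Y_j'\}$, but it is $C^1$ (indeed smooth) on each piece, and our norms only test against $C^q$ or $C^1$ functions along \emph{vertical} leaves, which are exactly the stable leaves and are respected by the skew-product structure.

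\medskip\noindent\textbf{Step 1: push-forward along vertical leaves.} Fix an admissible leaf $W=W(x)$ and a test function $\phi$ with $|\phi|_{C^q(W)}\le1$ (resp. $|\phi|_{C^1(W)}\le1$). Since $F=(F_0,G)$ is a skew product and $\det DF^{-1}$ factors as $(F_0'\circ F_0^{-1})^{-1}(x)\cdot(\partial_y G\circ F^{-1})^{-1}$, a change of variables in the $y$-integral alone gives
\[
\int_{W(x)} (Rh)\,\phi\, dm = \sum_{j:\, x\in X_0,\ x\in F_0(X_j)} \frac{1}{|F_0'(F_0^{-1}(x))|}\int_{W(F_0^{-1}(x))} h(z)\,\psi_{j,x}(z)\, dm(z),
\]
where $\psi_{j,x}$ is $\phi$ pulled back through the vertical contraction $G(F_0^{-1}(x),\cdot)$. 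By~\eqref{eq:stable-contraction} the vertical map is $\lambda^{-1}$-Lipschitz, so $|\psi_{j,x}|_{C^q}\le C$ uniformly; hence each summand is bounded by $C|F_0'|^{-1}\|h\|_s$, and $\sum_j |F_0'(F_0^{-1}(x))|^{-1}\le 1$ (it is the total measure of preimages, or one uses that $F_0$ is the first-return map with a generating Markov partition). This yields $\|Rh\|_s\le C\|h\|_s\le C\|h\|_\cB$, and the same computation with $C^1$ test functions bounds $\|Rh\|_{\cB_w}$.

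\medskip\noindent\textbf{Step 2: the unstable norm.} For $\|Rh\|_u$ one must compare $\int_{W(x)}(Rh)\phi$ with $\int_{W(x')}(Rh)\phi$ for nearby $x,x'$. Using the same decomposition, the difference splits into: (a) the variation of the scalar factor $|F_0'(F_0^{-1}(\cdot))|^{-1}$, controlled by the distortion bound~\eqref{eq-dist-2}; (b) the variation of the base point $F_0^{-1}(x)$ vs.\ $F_0^{-1}(x')$ of the integration leaf, which contributes a term of size $C|F_0^{-1}(x)-F_0^{-1}(x')|\,\|h\|_u \le C|x-x'|\,\|h\|_u$ (here the expansion $|F_0'|>2$ helps); and (c) the variation of the pulled-back test function $\psi_{j,x}$ vs.\ $\psi_{j,x'}$, which by~\eqref{eq:unstable-cone} (invariance of the unstable cone, i.e.\ $|\partial_x G|/|F_0'|\le K_0$) is Lipschitz in $x$ with a uniform constant, contributing $C|x-x'|\,\|h\|_s$. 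One subtlety: as $x$ crosses one of the finitely-many-per-level discontinuity points $x_j$, the index set of the sum changes. Because $F$ has a Markov structure (the preimage of a vertical segment is a union of full vertical segments of the same type), such a crossing does not create a jump in $\int_{W(x)}(Rh)\phi$ — the term that disappears from one side matches one appearing on the other — so the estimate passes through the countably many partition boundaries. Summing over $j$ with the $\sum |F_0'|^{-1}\le 1$ bound gives $\|Rh\|_u\le C(\|h\|_s+\|h\|_u)=C\|h\|_\cB$.

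\medskip\noindent\textbf{Main obstacle.} The delicate point is precisely the behaviour at the discontinuity set of $F^{-1}$: one must verify that, despite $Rh$ being genuinely non-$C^1$ there, the \emph{leaf integrals} $x\mapsto\int_{W(x)}(Rh)\phi\,dm$ remain Lipschitz in $x$ with the claimed constant. This is where the Markov hypothesis on $f$ of the form~\eqref{eq-2DLSV} (no transversality of discontinuities, but full preimages of vertical segments) is essential, and it is the reason the simplified admissible-leaf definition — with no control on short leaves — suffices here, in contrast to~\cite{DemersLiverani08}. Once this continuity-across-cuts point is granted, the remaining estimates are the standard Lasota–Yorke-type bookkeeping already implicit in Step 1 and Step 2.
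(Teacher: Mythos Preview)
There is a genuine gap in your argument. The space $\cB$ is defined as the \emph{completion} of $C^1$ with respect to $\|\cdot\|_\cB$; hence showing that the quantities $\|Rh\|_s$ and $\|Rh\|_u$ (interpreted via the same suprema) are finite does \emph{not} establish that $Rh\in\cB$. You must exhibit a sequence of $C^1$ functions converging to $Rh$ in the $\cB$-norm. The paper makes exactly this point immediately after the Lasota--Yorke inequality: ``the fact that a function has a bounded norm does not imply that it belongs to $\cB$: for this, it is necessary to prove that it can be approximated by $C^1$ functions in the topology of the Banach space.'' Your bounds are essentially the $n=1$ case of the Lasota--Yorke estimate and are correct as such, but they do not address the approximation question.

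The paper's proof proceeds differently: it takes a sequence $\bar\psi_n\in C_0^1(\bR,[0,1])$ converging monotonically to $\Id_{[0,1]}$, sets $\tilde\psi_n(x,y)=\bar\psi_n(y)$ and $\psi_n=\tilde\psi_n\circ F^{-1}\cdot\Id_{F(Y)}$, and defines $H_n=\psi_n\, h\circ F^{-1}\det(DF^{-1})$. Each $H_n$ is genuinely $C^1$ (the cutoff kills the discontinuities along the boundaries of the strips $Y_j'$), and one checks $\|Rh-H_n\|_s\to0$ and $\|Rh-H_n\|_u\to0$ using the same leafwise change of variables you employ, together with~\eqref{eq:unstable-cone} and~\eqref{eq-dist-2}. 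The estimates you wrote in Steps~1--2 are the right ingredients, but they must be applied to the \emph{difference} $Rh-H_n$, where the extra factor $|1-\bar\psi_n|$ furnishes the smallness. (Incidentally, in the Markov example $F_0$ maps each $X_j$ onto all of $(1/2,1]$, so the index set of preimages never changes with $x$; the ``crossing'' subtlety you discuss does not actually arise here.)
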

\begin{proof}
Using the notation introduced at the beginning of section \ref{subsec-admleaves}, we have $F(Y)=\cup_j Y_j'$. Moreover, both $F^{-1}$ and $\det(DF^{-1})$ are $C^1$ on each $\overline{Y_j'}$. For each $j\in\bN$ note that $Y_j'$ consists of an horizontal strip bounded by the curves $\gamma_1(x)=G(g_j(x),1)$, $\gamma_0(x)=G(g_j(x),0)$ where $g_j:(\frac 12,1]\to(\frac 12,1]$ is the inverse branch of $F_0$ corresponding to the return time $j$. Remark that, by equation \eqref{eq:unstable-cone}, it follows that $|\gamma_i'|_\infty\leq K_0$, for $i\in\{0,1\}$. We can then consider a sequence of $\bar\psi_{n}\in C_0^1(\bR, [0,1])$ that converges monotonically to $\Id_{[0,1]}$ and define $\tilde\psi_n(x,y)=\bar\psi_n(y)$. Next, we define the function $\psi_{n}=\tilde\psi_n\circ F^{-1}\cdot \Id_{F(Y)}$. Note that $\psi_{n}$ is smooth and converges monotonically to $\Id_{F(Y)}$. We can then define 
\[
H_{n}= \psi_{n}h\circ F^{-1}\det(DF^{-1})\in C^1.
\]
Consider an admissible leaf $W=W(x)$ and a test function $\phi$. Note that $F^{-1}W=\cup_j W_j$ where $W_j=F^{-1}W\cap Y_j =W(g_j(x))$ are vertical leaves. Thus, given $h\in C^1$,\footnote{ Since $W_j\subset Y$,  $F(W_j)\subset F(Y)$. Thus, $\Id_{F(Y)}\circ F$, restricted on $W_j$, equals one.}
\[
\begin{split}
\left|\int_W[Rh-H_{n}]\phi\right|&\leq \sum_j\int_{W_j} |h|\, |F_0'|^{-1}|\phi\circ F|\,|1-\tilde\psi_{n}|\\
&\leq \|h\|_\infty\|\phi\|_\infty C\sum_{j}|F_0'(g_j(x))|^{-1}\int_0^1 |1-\bar\psi_{n}(t)|dt
\end{split}
\]
since $\det(DF^{-1})\circ F=\det(DF)^{-1}=(F_0'\cdot \partial_y G)^{-1}$ and where we have used \eqref{eq-dist} in the second line. Also we have used, and will use in the following, a harmless abuse of notation insofar we write $\phi\circ F^n$ to mean $\phi\circ\pi\circ F^n$. Since the sun is convergent and the integral converge to zero, it follows that the right hand side can be made arbitrarily small by taking $n$ large enough. It follows that $H_{n}$ converges to $Rh$ in $\cB_w$.

The above computation also shows that $\lim_{n\to\infty}\|Rh-H_{n}\|_s=0$. Thus, it remains to check the unstable norm. Let $x,z\in[1/2,1]$. Let $x_j=g_j(x)$, $(x_j,0)\in Y_j$, and $z_j=g_j(z)$, $(z_j,0)\in Y_j$. Then, for each $\phi$, $\|\phi\|_{C^1}\leq 1$, we have
\[
\begin{split}
&\left|\int_{W(x)}[Rh-H_{n}]\phi-\int_{W(z)}[Rh-H_{n}]\phi\right|\\
\leq&\sum_{j}\int_0^1\left| h(x_j,t)F_0'(x_j)^{-1}\phi\circ F(x_j,t)-h(z_j,t)F_0'(z_j)^{-1}\phi\circ F(z_j,t)\right|\,\left|1-\bar\psi_n(t)\right|.
\end{split}
\]
Since, by hypothesis and equations \eqref{eq:unstable-cone}, \eqref{eq-dist-2}, $|\frac d{dx}\left[h(\cdot, t)(F_0)'(\cdot)^{-1}\phi\circ F(\cdot,t)\right]|\leq C$ for some fixed $C>0$, we have
\[
\left|\int_{W(x)}[Rh-H_{n}]\phi-\int_{W(x)}[Rh-H_{n}]\phi\right|\leq C\sum_j|x_j-z_j|\int_0^1\left|1-\bar\psi_n(t)\right|.
\]
Finally, by equation \eqref{eq-dist}, we have $|x_j-z_j|\leq C |F_0'(x_j)|^{-1}|x-z|$ and again we can conclude as above.
\end{proof}

\subsection{ Lasota--Yorke inequality and compactness: verifying (H5)(i) and (H1)(ii).}

The next Lemma is the basic result on which all the theory rests.

\begin{prop} (Lasota--Yorke inequality.) \label{prop-LSinequality} For each $z\in\overline\bD$, $n\in\bN$ and $h\in C^1(Y,\bC)$ we have  
\[
\begin{split}
&\|R(z)^n h\|_{\cB_w}\leq C|z|^n\|h\|_{\cB_w}\\
&\|R(z)^n h\|_{\cB} \leq  \lambda^{-nq}|z|^n \|h\|_{\cB}+C|z|^n \|h\|_{\cB_w}.
\end{split}
\]
\end{prop}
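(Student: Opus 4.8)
The plan is to prove the two estimates for $n=1$ and a general $h\in C^1(Y,\bC)$; the iterate $n$ follows by the standard trick of applying the one-step inequality repeatedly and summing a geometric series, using that $R(z)h = R(z^\vf h)$ and hence $R(z)^n h = R((z^{\vf})_n h)$ over the partition $\cY_n$, exactly as for ordinary transfer operators. So I focus on $\|R(z)h\|_{\cB_w}$, $\|R(z)h\|_s$ and $\|R(z)h\|_u$. The fundamental computation is the change of variables already recorded in~\eqref{eq:transfer-L1} and used in Lemma~\ref{lem:R-well-def}: for an admissible leaf $W=W(x)$ and a test function $\phi$, the preimage $F^{-1}W$ is a disjoint union of vertical leaves $W_j=W(g_j(x))$, one in each $Y_j$, and
\[
\int_W R(z)h\,\phi\,dm = \sum_j z^j\int_{W_j} h(x_j,t)\,|F_0'(x_j)|^{-1}\,\phi\!\circ\! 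F(x_j,t)\,dt,
\]
where $x_j=g_j(x)$ and I write $\phi\circ F$ for $\phi\circ\pi\circ F$.

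\textbf{Weak norm.} Here one estimates $|\int_W R(z)h\,\phi|\le |z|\sum_j |F_0'(x_j)|^{-1}\bigl|\int_{W_j} h\cdot(\phi\circ F)\bigr|$. Since $|\partial_y(\phi\circ F)|\le |\partial_y G|\,|\phi'|_\infty\le\lambda^{-1}$ by~\eqref{eq:stable-contraction}, the function $t\mapsto|F_0'(x_j)|^{-1}\phi\circ F(x_j,t)$ has $C^1$ norm $\le C|F_0'(x_j)|^{-1}$; testing against $h$ on the admissible leaf $W_j$ gives $\le C|F_0'(x_j)|^{-1}\|h\|_{\cB_w}$. Summing $\sum_j|F_0'(x_j)|^{-1}\le C$ (bounded distortion of $F_0$ plus $\sum_j |X_j|\le 1$) yields $\|R(z)h\|_{\cB_w}\le C|z|\,\|h\|_{\cB_w}$.

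\textbf{Strong stable norm.} The contraction is produced exactly here. One repeats the weak-norm computation with a $C^q$ test function $\phi$, $|\phi|_{C^q(W)}\le1$. Now $t\mapsto \phi\circ F(x_j,t)$ has $C^q$-Hölder constant bounded by $(\mathrm{Lip}(\pi\circ F|_{W_j}))^q\le(C\lambda^{-j})^q$ because $F$ contracts vertical leaves by $\lambda^{-j}$ on $Y_j$ (iterating~\eqref{eq:stable-contraction}); the sup-norm part is still $\le 1$. So the composed test function on $W_j$ has $C^q$ norm $\le C|F_0'(x_j)|^{-1}(1+\lambda^{-jq})$, and more precisely the \emph{oscillatory} part is multiplied by $\lambda^{-jq}$. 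Splitting $\phi\circ F = \overline{\phi\circ F} + (\phi\circ F - \overline{\phi\circ F})$ into its average over $W_j$ plus the fluctuation, the average part is controlled by $\|h\|_{\cB_w}$ (a constant is a legitimate $C^1$ test function) and the fluctuation part by $\lambda^{-jq}\|h\|_s$. Since $\vf=j$ on $Y_j$ and the corresponding power of $z$ contributes $|z|^j\le|z|\lambda^{-(j-1)\cdot 0}$, one gets, after summing $\sum_j|F_0'(x_j)|^{-1}<C$ and using $|z|\le1$,
\[
\|R(z)h\|_s \le \lambda^{-q}|z|\,\|h\|_s + C|z|\,\|h\|_{\cB_w}.
\]
(One must be slightly careful: the leading factor should be $\lambda^{-q}$, not $\lambda^{-q}$ times something $>1$; the distortion sum is absorbed into the $\|h\|_{\cB_w}$ term, and the uniform choice of $\lambda>1$ from~\eqref{eq:stable-contraction} is what makes the stable coefficient $<1$.)

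\textbf{Strong unstable norm.} One takes two bases $x,z'\in[1/2,1]$, a $C^1$ test function with $|\phi|_{C^1}\le1$, and estimates $|x-z'|^{-1}|\int_{W(x)}R(z)h\,\phi-\int_{W(z')}R(z)h\,\phi|$. Writing both as sums over $j$ as above and pairing the $j$-th terms, the difference splits into: (a) terms where one varies $h$ and the test function simultaneously along the unstable direction — here one uses that $x_j=g_j(x)$, $z'_j=g_j(z')$ satisfy $|x_j-z'_j|\le C|F_0'(x_j)|^{-1}|x-z'|$ by~\eqref{eq-dist}, and that $|\frac{d}{dx}[\,|F_0'(x_j)|^{-1}\phi\circ F(x_j,\cdot)\,]|\le C$ by~\eqref{eq:unstable-cone} and~\eqref{eq-dist-2}, so this contributes $\le C\sum_j |F_0'(x_j)|^{-1}\|h\|_{\cB_w}$; plus (b) the genuinely "unstable" term where $h$ is compared on the two leaves $W(x_j)$, $W(z'_j)$ against (essentially) the same test function — this is bounded by $\sum_j |F_0'(x_j)|^{-1}\cdot |x_j-z'_j|\,|x-z'|^{-1}\|h\|_u \le C\sum_j|F_0'(x_j)|^{-2}\|h\|_u$. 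Crucially, $\sum_j|F_0'(x_j)|^{-2}\le \sup_j|F_0'(x_j)|^{-1}\cdot\sum_j|F_0'(x_j)|^{-1}$, and since the return-time-$1$ branch has $|F_0'|\ge$ some fixed constant and all other branches have $|F_0'|\ge 2$, a contraction factor appears; combined with $|z|^j\le|z|$ one again reaches $\|R(z)h\|_u\le \lambda^{-q}|z|\,\|h\|_u + C|z|\,\|h\|_{\cB_w}$ possibly after replacing $\lambda$ by a slightly smaller constant $>1$ (still denoted $\lambda$) — here one may have to split off finitely many branches with small derivative and treat them by the $\cB_w$ term, exactly as in~\cite{DemersLiverani08}.

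Adding the stable and unstable estimates gives the claimed bound on $\|R(z)h\|_{\cB}$. \textbf{The main obstacle} I anticipate is the unstable-norm estimate, specifically extracting a genuine contraction coefficient $\lambda^{-q}<1$ (rather than merely $O(1)$) from the sum $\sum_j|F_0'(x_j)|^{-2}$ while simultaneously keeping the cross terms of type (a) in the weak norm; this is the place where the skew-product structure and the cone condition~\eqref{eq:unstable-cone} must be used most carefully, and where one has to argue that the finitely many low-expansion branches near the return time $1$ do not spoil the contraction (they get dumped into the $C\|h\|_{\cB_w}$ term). The weak and stable norm estimates are comparatively routine adaptations of Lemma~\ref{lem:R-well-def}.
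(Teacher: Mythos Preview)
Your approach is essentially the same as the paper's: decompose the integral over $W$ into integrals over preimage leaves $W_j$, bound the weak norm directly, split $\phi\circ F^n$ into leaf-average plus fluctuation for the stable norm, and exploit the contraction $|\xi_j-\eta_j|\le C|(F_0^n)'|^{-1}|x-y|$ of preimage base points together with the summability of $|(F_0^n)'|^{-2}$ for the unstable norm. Two points deserve correction, though.

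First, your reading of \eqref{eq:stable-contraction} is off: that inequality already concerns the \emph{return map} $F$ and says $|\partial_y G|\le\lambda^{-1}$, i.e.\ one application of $F$ contracts vertical leaves by $\lambda^{-1}$ independently of which $Y_j$ one is in. It is not that $f$ contracts by $\lambda^{-1}$ and hence $F=f^j$ by $\lambda^{-j}$ (in fact $f$ need not contract strictly). This does not break your stable-norm bound, but it explains why the paper works directly with $F^n$ to obtain $\lambda^{-nq}$ rather than invoking return-time-dependent contraction. Your worry about low-expansion branches is also unfounded here: $|F_0'|\ge 2$ uniformly, so $\sum_j|(F_0^n)'(\xi_j)|^{-2}\le 2^{-n}\sum_j|(F_0^n)'(\xi_j)|^{-1}\le C\,2^{-n}$ with no splitting needed.

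Second, in the unstable norm the paper's decomposition is cleaner than your (a)/(b). The paper fixes the single test function $\Phi_n(t)=|(F_0^n)'(\eta_j)|^{-1}\phi(G_n(\eta_j,t))$ and uses it on \emph{both} leaves $W(\xi_j)$ and $W(\eta_j)$; the leaf comparison is then bounded directly by $C|\xi_j-\eta_j|\,\|\Phi_n\|_{C^1}\|h\|_u$, while the residual scalar term $\bigl[|(F_0^n)'(\xi_j)|^{-1}-|(F_0^n)'(\eta_j)|^{-1}\bigr]\phi\circ F^n$ on $W(\xi_j)$ is absorbed into $\|h\|_s$ via \eqref{eq-dist-2}. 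Your plan to route the cross terms (a) through $\|h\|_{\cB_w}$ instead would require controlling the $C^1(W_j)$ norm (not just $C^0$) of the difference of test functions, hence a uniform bound on $\partial_x\partial_y G$ that is not part of the hypotheses. Replacing $\|h\|_{\cB_w}$ by $\|h\|_s$ in that step, as the paper does, removes the difficulty.
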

\begin{proof}
Setting $\vf_n=\sum_{k=0}^{n-1}\vf\circ F^k$ we have that $R(z)^n h=R^n(z^{\vf_n} h)$. Remark that $\vf_n$ is constant on the elements of $Y_{n,j}$ of $\cY_n$, moreover $\vf_n\geq n$, hence $|z^{\vf_n}|\leq |z|^n$.

Given $W\in\Sigma$, with base point $x$, we have $F^{-n}(W)=\cup_{j\in\bN} W_j$ where $\cW=\{W_j\}_{j\in\bN}\subset \Sigma$ is the collections of the maximal connected components.
Note that each $Y_{n,j}\in\cY_n$ contains precisely one $W_j$. 
Then, for $|\phi|_{C^1(W,\bC)}\leq 1$, we have
\[
\int_W (R(z)^n h)\phi\, dm=\int_Wz^{\vf_n}\Id_{F^n(Y)}h\circ F^{-n}\det(DF^{-n})\phi.
\]
By the invertibility of $F$, the connected components of $W\cap F^n(Y)$ are exactly $\{F^nW'\}_{W'\in \cW}$. Notice that $F^n(x,y)=(F_0^n(x),G_n(x,y))$, while $F^{-n}(x,y)$ has the more general form $(A(x,y),B(x,y))$. Yet, the function $A$ depends on $y$ only in a limited manner: $A(x,y)=g_j(x)$ for all $(x,y)\in F^n(Y_{n,j})$. Also, it is convenient to call $B_j$ the function $B$ restricted to $F^n(Y_{n,j})$. If $(x_j,0)\in W_j\in \cW$, then $G_n(x_j,t)$ provides a parametrization for the little segment $F^nW_j\subset W$. In addition,  for $(x, y)\in F^nW_j$, $F^{-n}(x,y)=(x_j, H_{n,j}(y))$ with $x_j=g_j(x)$ and $H_{n,j}(y)=B_j(x,y)$. We can then write
\[
\begin{split}
\int_{F^nW_j}h\circ F^{-n}\det(DF^{-n})\phi=&\int_{G_n(x_j,0)}^{G_n(x_j,1)} \frac{h(x_j, H_{n,j}(t))}{(F_0^n)'(x_j)\partial G_n(x_j, H_{n,j}(t))}\phi(t) dt\\
=&\int_0^1 \frac{h(x_j, t)}{(F_0^n)'(x_j)}\phi(G_n(x_j,t)) dt.
\end{split}
\]
By the above computation we have
\[
\left|\int_W (R(z)^n h)\phi\, dm\right|\leq 
\sum_{W_j\in\mathcal{W}} \left|\int_{W_j} z^{\vf_n} h [(F_0^n)']^{-1}\phi\circ  F^n\,dm\right|\leq \sum_{Y_{n,j}\in\cY_n}\|h\|_{\cB_w}\left|\frac{\phi\circ  F^n}{(F_0^n)'}\right|_{C^1(W_j)}\hskip-.5cm |z|^n .
\]
Note that \eqref{eq-dist-2} and \eqref{eq-dist} imply 
\[
\|[(F_0^n)']^{-1}\phi\circ  F^n\|_{C^0(W_j)}\leq C \sup_{x\in W_j}[(F_0^n)']^{-1}\leq 2C m(Y_{n,j}).
\]
W.r.t. the $C^1$ norm, we have
\[
\|[(F_0^n)']^{-1}\phi\circ  F^n\|_{C^1(W_j)}\leq \|[(F_0^n)']^{-1}\|_{C^1}\|\phi\circ  F^n\|_{C^0}+\|[(F_0^n)']^{-1}\|_{C^0}\|\phi\circ  F^n\|_{C^1}.
\]
Recall that $\phi\circ F^n$ stands for $\phi\circ\pi\circ F^n$, thus\footnote{ In the following we will implicitly use standard computations of this type.} 
\[
\begin{split}
\|\phi\circ  F^n\|_{C^1}&\leq |\phi|_\infty+\sup_t|\frac{d}{dt} \phi\circ\pi\circ F^n(x_j,t)|\leq |\phi|_\infty+|\phi'\circ \pi\circ F^n\cdot \partial_yG_n(x_j,\cdot)|_\infty\\
&\leq |\phi|_\infty+|\phi '|_\infty\lambda^{-n}\leq C\|\phi\|_{C^1}.
\end{split}
\]
Thus
\begin{equation}\label{eq:weak-ly-test}
\|[(F_0^n)']^{-1}\phi\circ  F^n\|_{C^1(W_j)}\leq C m(Y_{n,j}).
\end{equation}
Equation \eqref{eq:weak-ly-test} allows to estimate the weak norm as follows
\begin{equation}\label{eq:weak-ly}
\left|\int_W (R(z)^n h)\phi\, dm\right|\leq C\|h\|_{\cB_w}|z|^n\sum_j m(Y_{n,j})\leq C\|h\|_{\cB_w}|z|^n.
\end{equation}
The first inequality of the proposition follows. Let us discuss the strong stable norm. Given  $|\phi|_{C^q(W,\bC)}\leq 1$, we have
\[
\begin{split}
\left|\int_W (R(z)^n h)\phi\, dm\right|&\leq \sum_{W_j\in\cW} \left|\int_{W_j} h [(F^n)']^{-1}\phi\circ  F^n\,dm\right|\,|z|^n\\
&\leq  \sum_{W_j\in\cW} \left|\int_{W_j} h \hat \phi_j\,dm\right|\,|z|^n+\left|\int_{W_j} h [(F^n)']^{-1}\bar \phi_j\,dm\right|\,|z|^n,
\end{split}
\]
where $\bar \phi_j=|W_j|^{-1}\int_{W_j}\phi\circ  F^n$ and $\hat \phi_j=[(F^n)']^{-1}(\phi\circ  F^n-\bar\phi_j)$.
Let $J_{W_j}F^{n}$ be the stable derivative on the fibre $W_j$ (recall that $|J_{W_j}F^{n}|\leq \lambda^{-n}$), then
\[
\begin{split}
&|\phi\circ  F^n-\bar\phi_j|\leq |J_{W_j}F^{n}|_\infty^{q}|W_j|^q\leq C|F^n(W_j)|^q\\
&\sup_{x,y\in W_j}\frac{|\hat\phi_j(x)-\hat\phi_j(y)|}{\|x-y\|^q}\leq C |(F^n)'|_{L^\infty(W_j)}^{-1}|J_{W_j}F^{n}|_\infty^{q}
\leq  \frac{C|F^n(W_j)|^q}{ |(F^n)'|_{L^\infty(W_j)}|W_j|^q}.
\end{split}
\]
Hence, 
\[
\|\hat \phi_j\|_{C^q(W_j,\bC)}\leq C \frac{|F^n(W_j)|^q}{ |(F^n)'|_{L^\infty(W_j)}}\leq C m(Y_{n,j})\lambda^{-nq}.
\]
The above bound yields,
\[
\left|\int_W (R(z)^n h)\phi\, dm\right|\leq C\|h\|_s\lambda^{-nq}\,|z|^n +C\|h\|_{\cB_w}\,|z|^n.
\]
We are left with the strong unstable norm. Let $\|\phi\|_{C^1}\leq 1$, $x,y\in [1/2,1]$.
Let $\cW(x)$ be the set of pre images of $W(x)$ under $F^n$ and the same for $\cW(y)$. Note that to each element of $W_j(x)$ it corresponds a unique element $W_j(y)$ that belongs to the same set $Y_{n,j}\in\cY_n$. Let $\xi_j,\eta_j\in [0,1]$ be such that $W_j(x)=W(\xi_j)$ and $W_j(y)=W(\eta_j)$.
By the usual distortion estimates we have $|\xi_j-\eta_j|\leq C |(F_0^n)'(\xi_j)|^{-1}|x-y|$. We introduce the function $\Phi_{n}=|(F_0^n)'|^{-1}(\eta_j)\cdot\phi\circ  F^n|_{W(\eta_j)}$, and write
\begin{equation}\label{eq:strong-ly}
\begin{split}
&\left|\int_{W(x)}\hskip-.4cm R(z)^nh\phi\, dm-\int_{W(y)}\hskip-.4cm R(z)^nh\phi\, dm\right|\leq\sum_j\left|\int_{W(\xi_j)}\hskip-.4cm h \Phi_{n}\, dm-\int_{W(\eta_j)}\hskip-.4cm h\Phi_{n}\, dm\right|\,|z|^n\\
&\quad+\sum_j\left|\int_{W(\xi_j)} h \left[\,|(F_0^n)'(\xi_j)|^{-1}-|(F_0^n)'(\eta_j)|^{-1}\right]\phi\circ  F^n\, dm\right|\,|z|^n\\
&\leq \sum_j \lambda^{-n}|z|^n|x-y|\left[\|h\|_u+C\|h\|_s\right]|(F_0^n)'|_{L^\infty(W(x_j))}^{-1}\leq C\lambda^{-n} |z|^n|x-y| \|h\|,
\end{split}
\end{equation}
where we have used that $\phi\circ  F^n|_{W(\eta_j)}=\phi\circ  F^n|_{W(\xi_j)}$.
The Lemma follows then by iterating the above formula.
\end{proof}

The above Proposition, together with Lemma \ref{lem:R-well-def}, readily implies that $R(z)\in L(\cB,\cB)$, i.e. Hypothesis (H1)(ii) holds true. Note that Proposition \ref{prop-LSinequality} alone would not suffice, indeed the fact that a function has a bounded norm does not imply that it belongs to $\cB$: for this, it is necessary to prove that it can be approximate by $C^1$ functions in the topology of the Banach space.

The proof of Lemma  \ref{lem:R-well-def} holds essentially unchanged also for the operator $R(z)$, thus $R(z)\in L(\cB,\cB)$. We can then extend, by density, the statement of Proposition  \ref{prop-LSinequality} to all $h\in \cB$, whereby proving hypothesis (H5)(i).

\subsection{Verifying  (H1)(iii) and (H5)(ii)}

The following Lemma is an immediate consequence of Lemma \ref{prop-LSinequality} and the compact embedding stated in Lemma \ref{lemma-embed} (e.g. see \cite{hen}).
\begin{lemma}\label{lem:compactenss}
For each $z\in\overline\bD$ the operator $R(z)$ is quasi-compact with spectral radius bounded by $|z|$ and essential spectral radius bounded by $|z|\lambda^{-q }$.
\end{lemma}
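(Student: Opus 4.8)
The statement to prove is Lemma~\ref{lem:compactenss}: for each $z\in\overline\bD$ the operator $R(z)$ is quasi-compact on $\cB$, with spectral radius at most $|z|$ and essential spectral radius at most $|z|\lambda^{-q}$. The strategy is entirely standard: combine the Lasota--Yorke-type inequality of Proposition~\ref{prop-LSinequality} with the compact embedding $\cB\hookrightarrow\cB_w$ supplied by Lemma~\ref{lemma-embed}, and invoke the Hennion--Nussbaum quasi-compactness criterion (cf. \cite{hen}). No new calculation is needed; the work is in checking that the hypotheses of that criterion hold verbatim.

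First I would recall the precise inequalities we are allowed to use. By Proposition~\ref{prop-LSinequality}, extended by density to all $h\in\cB$ as noted at the end of the previous subsection, we have for every $n\in\bN$, every $z\in\overline\bD$ and every $h\in\cB$
\[
\|R(z)^n h\|_{\cB_w}\leq C|z|^n\|h\|_{\cB_w},\qquad
\|R(z)^n h\|_{\cB}\leq \lambda^{-nq}|z|^n\|h\|_{\cB}+C|z|^n\|h\|_{\cB_w}.
\]
The first inequality shows that the spectral radius of $R(z)$ on $\cB_w$ is at most $|z|$; combined with the second and the fact that $\|\cdot\|_{\cB_w}\leq\|\cdot\|_{\cB}$, a routine estimate (bounding $\|R(z)^n\|_{\cB}$ by $C|z|^n$ after summing a geometric series) gives that the spectral radius of $R(z)$ on $\cB$ is also at most $|z|$.

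Next I would apply the abstract quasi-compactness theorem. Its hypotheses are: (a) the unit ball of the strong space $\cB$ is relatively compact in the weak space $\cB_w$ — this is exactly the last assertion of Lemma~\ref{lemma-embed}; (b) an iterated Lasota--Yorke inequality of the form $\|R(z)^n h\|_{\cB}\leq a_n\|h\|_{\cB}+b_n\|h\|_{\cB_w}$ with $a_n\to0$ appropriately controlled — here $a_n=\lambda^{-nq}|z|^n$ and $b_n=C|z|^n$; and (c) uniform boundedness of the iterates in $\cB_w$, which is the first inequality above. The theorem then yields that $R(z):\cB\to\cB$ is quasi-compact with essential spectral radius bounded by $\limsup_{n}a_n^{1/n}=|z|\lambda^{-q}$, while the full spectral radius is controlled by $\limsup_n(\text{iterate norm})^{1/n}\leq|z|$ as just argued. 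This is precisely the content of Lemma~\ref{lem:compactenss}.

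There is essentially no obstacle here: the only point requiring a moment's care is that Proposition~\ref{prop-LSinequality} is stated for $h\in C^1(Y,\bC)$, so one must first confirm (as the text already observes) that $R(z)\in L(\cB,\cB)$ — which follows from Lemma~\ref{lem:R-well-def} applied to $R(z)$ — and then extend the inequality to all of $\cB$ by density, using continuity of both norms. Once that is in place, the cited abstract result applies word for word, so the proof is a one-line invocation: "This is an immediate consequence of Proposition~\ref{prop-LSinequality}, the density argument above, and the compact embedding of Lemma~\ref{lemma-embed}; see \cite{hen}." The mild subtlety worth flagging is that the essential spectral radius bound $|z|\lambda^{-q}$ is uniform in the choice of the exponent $q\in[0,1)$ entering the strong stable norm, so one should state the conclusion for the fixed $q$ used throughout.
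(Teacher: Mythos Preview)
Your proposal is correct and matches the paper's own argument essentially verbatim: the paper states that the lemma is an immediate consequence of Proposition~\ref{prop-LSinequality} (extended by density to all of $\cB$) together with the compact embedding of Lemma~\ref{lemma-embed}, citing \cite{hen}. Your additional remarks on checking the spectral radius bound and on the density extension are accurate elaborations of precisely the reasoning the paper leaves implicit.
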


Note that $1$ belongs to the spectrum of $R$ (since the composition with $F$ is the dual operator to $R$ and $1\circ F=1$). By the spectral decomposition of $R$ it follows that $\frac 1n\sum_{i=0}^{n-1} R^n$ converges (in uniform topology) to the eigenprojector $\Pi$ associated to the eigenvalue $1$. Let $\mu=\Pi 1$. 
\begin{rmk}\label{rmk:mu}
Note that, by construction, $\mu=\mu_0\times m$ where $\mu_0$ is the unique SRB measure of $f_0$ and $m$ the Lebesgue measure.
\end{rmk}
The next step is the characterization of the peripheral spectrum.
\begin{lemma}\label{lem:peripheral}
Let $\nu\in\sigma(R(z))$ with $|\nu|=1$. Then any associated eigenvector $h$ is a complex measure. Moreover, such measures are all absolutely continuous with respect to $\mu$ and have bounded Radon-Nikodym derivative.
\end{lemma}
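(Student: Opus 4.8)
The plan is to transfer the problem to the one‑dimensional base map $F_0$ and exploit the fibrewise contraction. Since $|\nu|=1$, Lemma~\ref{lem:compactenss} forces $|z|=1$ (the spectral radius of $R(z)$ is at most $|z|$), and, $|\nu|$ exceeding the essential spectral radius $\lambda^{-q}<1$, the number $\nu$ is a genuine isolated eigenvalue, so $h\neq 0$ with $R(z)^nh=\nu^nh$ for every $n$; equivalently $h=\nu^{-n}R^n(z^{\varphi_n}h)$, where $\varphi_n=\sum_{k=0}^{n-1}\varphi\circ F^k$ is constant, say equal to $n_j\ge n$, on each $Y_{n,j}\in\cY_n$ (as in the proof of Proposition~\ref{prop-LSinequality}). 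Each $Y_{n,j}$ is a vertical strip $X_{n,j}\times[0,1]$ on which $F^n$ is smooth and, by \eqref{eq:stable-contraction}, contracts the fibres, $|\partial_yG_n|\le\lambda^{-n}$; hence for $\phi\in C^1$ the restriction $t\mapsto\phi(F^n(x,t))$ has $C^1([0,1])$‑norm at most $|\phi|_\infty+\lambda^{-n}|\phi|_{C^1}$, uniformly in $x$. This is the single fact that makes the whole argument work.

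\textbf{Step 1: $h$ is a finite complex measure.} Pairing $h=\nu^{-n}R^n(z^{\varphi_n}h)$ with $\phi\in C^1$ and running the change of variables from the proof of Proposition~\ref{prop-LSinequality} (after approximating $h$ by $C^1$ functions in $\cB$) I would obtain the exact identity $\langle h,\phi\rangle=\nu^{-n}\sum_{Y_{n,j}\in\cY_n}z^{n_j}\big\langle\Id_{Y_{n,j}}h,\;\phi\circ F^n\big\rangle$, each term being read on the strip $Y_{n,j}$, where $F^n=f^{n_j}$ is genuinely smooth. Estimating each term by the Fubini computation underlying Proposition~\ref{prop-embed} — which controls $|\langle\Id_{Y_{n,j}}h,\psi\rangle|$ by $\|h\|_{\cB_w}\,m(Y_{n,j})$ times the supremum over $x$ of the $C^1$‑norm of $\psi$ \emph{along the fibre} — and using the fibre bound above, $|z^{n_j}|=|\nu|=1$ and $\sum_jm(Y_{n,j})=1$, gives $|\langle h,\phi\rangle|\le\|h\|_{\cB_w}\big(|\phi|_\infty+\lambda^{-n}|\phi|_{C^1}\big)$. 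Letting $n\to\infty$ yields $|\langle h,\phi\rangle|\le\|h\|_{\cB_w}|\phi|_\infty$, so $h$ extends to a bounded functional on $C^0$, i.e.\ a finite complex Borel measure with $|h|(Y)\le\|h\|_{\cB_w}$. A short companion argument (again using that the weak norm only probes fibre regularity: the first‑coordinate marginal of $\phi(\pi\cdot)\,h$ is absolutely continuous with bounded Lebesgue density for every $\phi\in C^1$) shows that $|h|$ charges no single vertical leaf, hence the countable set $\bigcup_n\partial\cY_n$ is $|h|$‑null and the choice of $F^n$ there is irrelevant below.

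\textbf{Step 2: absolute continuity.} The key point is that the first‑coordinate marginal $h_x$ of $h$ is automatically tame: applying Proposition~\ref{prop-embed} to test functions depending only on $x$ (whose fibre $C^1$‑norm is the sup‑norm) gives $h_x\ll m$ on $X_0$ with $\|dh_x/dm\|_\infty\le\|h\|_{\cB_w}$, and since the $F_0$‑invariant density is bounded below (standard bounded‑distortion estimate, cf.\ \eqref{eq-dist}) this yields $|h_x|\le C_1\mu_0$ for a constant $C_1$. Now fix $\phi\in C^0$; from $h=\nu^{-n}F^n_*(z^{\varphi_n}h)$ we have $\langle h,\phi\rangle=\nu^{-n}\int_Yz^{\varphi_n}(\phi\circ F^n)\,dh$, and I would split $\phi\circ F^n=\widetilde\psi_n+r_n$ with $\widetilde\psi_n(x):=\int_0^1\phi(F^n(x,s))\,ds$ a function of $x$ only and $|r_n|\le\omega_\phi(\lambda^{-n})$ (fibre contraction, $\omega_\phi$ the modulus of continuity). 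Then
\[
|\langle h,\phi\rangle|\le\Big|\int_{X_0}z^{\varphi_n}\widetilde\psi_n\,dh_x\Big|+\omega_\phi(\lambda^{-n})|h|(Y)\le C_1\int_{X_0}|\widetilde\psi_n|\,d\mu_0+\omega_\phi(\lambda^{-n})\|h\|_{\cB_w},
\]
and by Jensen's inequality together with the $F$‑invariance of $\mu=\mu_0\times m$ (Remark~\ref{rmk:mu}), $\int_{X_0}|\widetilde\psi_n|\,d\mu_0\le\int_Y|\phi|\circ F^n\,d\mu=\int_Y|\phi|\,d\mu$. Letting $n\to\infty$ gives $|\langle h,\phi\rangle|\le C_1\int_Y|\phi|\,d\mu$ for all $\phi\in C^0$, whence $h\ll\mu$ with $\|dh/d\mu\|_{L^\infty(\mu)}\le C_1$.

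\textbf{Expected main difficulty.} The conceptual obstacle is that after $n$ iterations $\phi\circ F^n$ is extremely irregular in the unstable direction, so one cannot estimate with $\|h\|_{\cB}\,\|\phi\circ F^n\|_{C^1}$; the way out is that $\cB_w$ sees only fibre regularity, so only the $\lambda^{-n}$‑contraction along stable fibres enters, at the price of a sum over the (countably many) elements of $\cY_n$, which converges because $\sum_jm(Y_{n,j})=1$. On the technical side the fussiest points are (a) justifying the exact identity for $\langle h,\phi\rangle$ via the density/change‑of‑variables argument (routine, modelled verbatim on Proposition~\ref{prop-LSinequality}), (b) checking that $|h|$ ignores single vertical leaves, and (c) the small but essential observation that $h_x$ has bounded Lebesgue density — it is this, rather than any one‑dimensional spectral input, that closes the absolute‑continuity step.
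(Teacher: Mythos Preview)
Your Step 1 is essentially the paper's argument: both use the duality relation $\langle h,\phi\rangle=\nu^{-n}\langle h,z^{\varphi_n}\phi\circ F^n\rangle$, split over $\cY_n$, invoke Proposition~\ref{prop-embed}, and exploit that the \emph{fibre} $C^1$ norm of $\phi\circ F^n$ is $|\phi|_\infty+O(\lambda^{-n})$. The paper writes this in one line as $|h(\phi)|\le\|h\|_{\cB_w}\|\phi\circ F^n\|_{C^1}\to\|h\|_{\cB_w}\|\phi\|_{C^0}$.

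Your Step 2 is correct but genuinely different from the paper's. The paper argues purely functional-analytically: since the peripheral eigenspace is finite dimensional and $C^1$ is dense in $\cB$, there is $\psi\in C^1$ with $h=\Pi_\nu(z)\psi=\lim_n\frac1n\sum_{i<n}\nu^{-i}R(z)^i\psi$; then for $\phi\ge0$ the pointwise bound $|z^{\varphi_i}\psi|\le|\psi|_\infty$ gives $|\langle R(z)^i\psi,\phi\rangle|\le|\psi|_\infty\langle R^i1,\phi\rangle$, and passing to the Ces\`aro limit yields $|h(\phi)|\le|\psi|_\infty\,\mu(\phi)$. This is short, uses nothing about the skew-product structure, and works verbatim for any anisotropic space satisfying (H1). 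Your route instead exploits the specific structure of the example: the marginal $h_x$ has bounded Lebesgue density (via Proposition~\ref{prop-embed} on intervals, hence a Lipschitz distribution function), $d\mu_0/dm$ is bounded below, and $\mu=\mu_0\times m$, so fibre-averaging $\phi\circ F^n$ and using $F$-invariance of $\mu$ closes the estimate. This is also correct and arguably more hands-on, but it is tied to the product form of $\mu$ and the skew-product geometry; the paper's argument is the one that generalises (e.g.\ to the non-Markov spaces of Section~\ref{sec-nonMarkov}, or to situations without a smooth global stable foliation) and avoids the side issues you flag in (b) and (c).
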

\begin{proof}
Note that it must be $|z|=1$ since the spectral radius of $R(z)$ is smaller or equal to $|z|$. Next, let $h$ be an eigenvector with eigenvalue $\nu$, then $h\in \cB_w\subset (C^1)'$ and, for each $\phi\in C^1$, we have
\[
|h(\phi)|=\left|\nu^{-n} h(z^{\varphi_n} \phi\circ F^n)\right|\leq \sum_j |h(z^j\Id_{Y_{n,j}} \phi\circ F^n)|\leq \|h\|_{\cB_w}\|\phi\circ F^n\|_{C^1}
\]
where we have used Proposition \ref{prop-embed}. Since $\lim_{n\to\infty}\|\phi\circ F^n\|_{C^1}=\|\phi\|_{C^0}$ it follows that $h\in (C^0)'$, i.e. it is a measure. Since, by Lemma \ref{lem:compactenss}, the projector $\Pi_\nu(z)$ on the eigenspace associated to $\nu$ can be obtained as
\[
\lim_{n\to\infty}\frac 1n\sum_{i=0}^{n-1}\nu^{-i}R(z)^i
\]
and since the range of $\Pi_\nu(z)$ is finite dimensional, there must exists $\psi\in C^1$ such that $h=\Pi_\nu(z)\psi$. Then, for each $\phi\in C^1(Y,\bR_+)$, 
\[
\left|\int \phi \frac 1n\sum_{i=0}^{n-1}\nu^{-i}R(z)^i\psi\right|=|\psi|_\infty\int \frac 1n\sum_{i=0}^{n-1}R^i 1 \phi
\]
which, taking the limit for $n\to\infty$, implies $|h(\phi)|\leq |\psi|_\infty \mu(\phi)$ and the Lemma.
\end{proof}

Now suppose that $R h=e^{i\theta}h$. Then, by the above Lemma, there exists $v\in L^\infty(\mu)$ such that $h=v\mu$. Hence
\[
\mu (v\phi)=h(\phi)=e^{-i\theta}h(\phi\circ F)=e^{-i\theta}\mu( v\phi\circ F)=e^{-i\theta}\mu(\phi v\circ F^{-1})
\]
implies $v=e^{i\theta}v\circ F\,$ $\mu$-almost surely. By similar arguments, if $z=e^{i\theta}$ and $R(z)h=R(e^{i\theta\vf}h)=h$, then there exists $v\in L^\infty(\mu)$ such that $ve^{i\theta \varphi}=v\circ F\,$ $\mu$-almost surely.

\begin{prop} Hypotheses (H1)(iv) and (H5)(ii) hold true. 
\end{prop}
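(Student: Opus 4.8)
The plan relies on the quasi-compactness established in Lemma~\ref{lem:compactenss}: for every $z\in\overline\bD$ the operator $R(z):\cB\to\cB$ has spectral radius at most $|z|$ and essential spectral radius at most $|z|\lambda^{-q}$, with $\lambda^{-q}<1$. For $|z|<1$ this already puts the whole spectrum of $R(z)$ inside $\{|w|\le|z|\}$, so $1\notin\sigma(R(z))$ and (H5)(ii) is immediate there. For $|z|=1$ the set $\sigma(R(z))\cap\{|w|>\lambda^{-q}\}$ consists of finitely many eigenvalues of finite algebraic multiplicity; in particular, if $1\in\sigma(R(z))$ then $1$ is such an eigenvalue and possesses an eigenvector in $\cB$. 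Moreover Proposition~\ref{prop-LSinequality} at $z=1$ gives $\|R^n\|_{\cB\to\cB}\le\lambda^{-nq}+C$ for all $n$, hence $\sup_n\|R^n\|_{\cB\to\cB}<\infty$, which forbids non-trivial Jordan blocks on the peripheral spectrum of $R$. Thus (H1)(iv) follows once we show that the peripheral spectrum of $R=R(1)$ reduces to the single eigenvalue $1$ (recall $1\in\sigma(R)$), with one-dimensional eigenspace, while (H5)(ii) follows once we show that for $z=e^{i\alpha}$, $\alpha\in(-\pi,\pi]\setminus\{0\}$, the value $1$ is not an eigenvalue of $R(z)$.

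By Lemma~\ref{lem:peripheral} and the discussion following it, every such eigenvector has the form $h=v\mu$ with $v\in L^\infty(\mu)$, where $v=e^{i\theta}v\circ F$ ($\mu$-a.e.) in the case $Rh=e^{i\theta}h$, and $v\circ F=e^{i\alpha\varphi}v$ ($\mu$-a.e.) in the case $R(e^{i\alpha})h=h$; here $\varphi(x,y)=\varphi_0(x)$ depends on the first coordinate only. Taking moduli, $|v|$ is $F$-invariant in both cases. So everything reduces to two statements about $(F,\mu)$: (a) $(F,\mu)$ is ergodic, so $|v|$ is $\mu$-a.e. a positive constant ---and, when $\theta=0$, $v$ itself is constant, which yields the one-dimensional eigenspace; and (b) $(F,\mu)$ has no bounded eigenfunction with eigenvalue $\neq 1$, which eliminates $\theta\neq 0$ in the first case and forces $\alpha\in 2\pi\bZ$ (hence $z=1$, a contradiction) in the second.

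Both (a) and (b) I would derive from the uniformly contracting vertical foliation. By \eqref{eq:stable-contraction} the second coordinates of $F^n(x,y)$ and $F^n(x,y')$ differ by at most $C\lambda^{-n}$, uniformly in $x,y,y'$; hence, for continuous $g$, the function $g\circ F^n$ is $C^0$-close, up to $\omega_g(C\lambda^{-n})$ (with $\omega_g$ the modulus of continuity of $g$), to a function of the first coordinate only. Iterating the cocycle relation gives $v=e^{-i\Theta_n}\,v\circ F^n$ with $\Theta_n$ a function of the first coordinate alone ($\Theta_n=-n\theta$ in the first case, $\Theta_n=\alpha\varphi_n$ with $\varphi_n$ the $n$-th Birkhoff sum of $\varphi_0$ over $F_0$ in the second). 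Approximating $v$ in $L^1(\mu)$ by continuous $g$ and using the $F$-invariance of $\mu$ together with $|e^{-i\Theta_n}|\equiv 1$, one finds that $v$ lies within $L^1(\mu)$-distance $\|v-g\|_{L^1(\mu)}+\omega_g(C\lambda^{-n})$ of a function of the first coordinate; letting $n\to\infty$ and refining $g$ shows that $v$ agrees $\mu$-a.e. with a function $\tilde v$ of the first coordinate alone. Substituting back and using Remark~\ref{rmk:mu} to identify the first-coordinate marginal of $\mu$ with the $f_0$-invariant measure $\mu_0$, we obtain $\tilde v=e^{i\theta}\tilde v\circ F_0$ ($\mu_0$-a.e.) in the first case, and $\tilde v\circ F_0=e^{i\alpha\varphi_0}\tilde v$ ($\mu_0$-a.e.) in the second. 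Finally, $F_0$ is the first return map to $(1/2,1]$ of the map \eqref{eq-LSV}, a full-branch uniformly expanding Markov map with the bounded distortion \eqref{eq-dist}--\eqref{eq-dist-2}, hence (see \cite{LiveraniSaussolVaienti99}) mixing, and in particular exact and weakly mixing with respect to $\mu_0$. This gives (a); it forces $\theta=0$ (with $\tilde v$ constant) in the first case; and, since $\varphi_0\equiv 1$ on $(3/4,1]$ and takes all large integer values, mixing of $F_0$ together with the aperiodicity of $\varphi_0$ forces $\alpha\in 2\pi\bZ$, i.e. $z=1$, in the second, completing (b).

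The main obstacle is the reduction carried out in the third paragraph: proving that $F$-invariant functions and bounded eigenfunctions of $F$ factor, modulo $\mu$, through the first coordinate. This is precisely where the uniform stable contraction \eqref{eq:stable-contraction}, the invariance of $\mu$, and the density of continuous functions in $L^1(\mu)$ are used, and it is the only genuinely new ingredient; once it is granted, the remaining input is the classical exactness and return-time aperiodicity of the LSV first return map. Assembling the pieces: $R$ has a spectral gap, with $1$ a simple eigenvalue and the rest of the spectrum contained in a disk of radius $<1$, which is (H1)(iv); and $1\notin\sigma(R(z))$ for every $z\in\overline\bD\setminus\{1\}$, which is (H5)(ii).
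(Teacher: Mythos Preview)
Your proof is correct and follows essentially the same route as the paper: reduce the peripheral eigenvector $v$ to a function of the first coordinate alone, using the uniform contraction \eqref{eq:stable-contraction} together with an $L^1(\mu)$ approximation of $v$ by smoother functions, and then invoke the known one-dimensional result for $F_0$. The paper carries out the reduction via $C^1$ approximants and a conditional-expectation identity (its equation \eqref{eq-psiA}), while you phrase it through continuous approximants, moduli of continuity, and the closedness of the subspace of functions of $x$ in $L^1(\mu)$; these are the same argument in slightly different packaging. For the final step the paper simply cites \cite[Theorem 3.1]{AaronsonDenker01}, whereas you sketch the aperiodicity reasoning directly; both are acceptable, though your phrase ``mixing of $F_0$ together with the aperiodicity of $\varphi_0$'' is a shorthand for that theorem rather than a self-contained argument.
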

\begin{proof} As the proof of the two hypotheses is essentially the same, we limit ourselves to the proof of (H5)(ii). Let $v:Y\to\C$ be a (non identically zero) measurable solution to the equation  $v\circ F=e^{i\theta\varphi}v$ a.e. on $Y$, with
$\theta\in (0, 2\pi)$. By Lusin's theorem, $v$ can be approximated in $L^1(\mu)$ by a $C^0$ function, which in turn can be approximated by a
$C^\infty$ function. Hence, there exists a sequence $\xi_n$ of $C^1$ functions  such that $|\xi_n-v|_{L^1(\mu)}\to 0$, as $n\to\infty$. So,
we can write 
\[
v=\xi_n+\rho_n,
\] where $|\rho_n|_{L^1(\mu)}\to 0$, as $n\to\infty$.

Starting from $v=e^{-i\theta\varphi} v\circ F$ and iterating forward $m$ times (for some $m$ large enough to be specified later),
\begin{align*}
v=e^{-i\theta\sum_{j=0}^{m-1}\varphi_0\circ F_0^j}( \xi_n\circ F^m+\rho_n\circ F^m).
\end{align*}

Clearly,
\begin{align}\label{eq-rhon}
|e^{-i\theta\sum_{j=0}^{m-1}\varphi_0\circ F_0^j}\rho_n\circ F^m|_{L^1(\mu)}= |\rho_n\circ F^m|_{L^1(\mu)}= |\rho_n|_{L^1(\mu)}\to 0,
\end{align}
as $n\to\infty$.

Next, put $A_{n,m}:=e^{-i\theta\sum_{j=0}^{m-1}\varphi_0\circ F_0^j} \xi_n\circ F^m$ and note that for all $n$ and $m$
\[
|\partial_y A_{n,m}|\leq |\partial_y \xi_n|_{\infty}|\partial_y F^m|.
\]

By condition~\eqref{eq:stable-contraction}, there exists $0<\tau<1$ such that $|\partial_y F|=\tau$. Hence, for any $\ve>0$ and any
$n\in\bN$, there exists $m\in\bN$ such that
\[
|\partial_y A_{n,m}|_\infty<\varepsilon.
\]
It is then convenient to use $\bE_\mu$ for the expectation with respect to $\mu$ and $\bE_\mu(\cdot\;|\; x)$ for the conditional expectation with respect to the $\sigma$-algebra generated by the set of admissible leaves.
As a consequence, $|A_{n,m}(x,y)-\bE_\mu(A_{n,m}\;|\;x)|\leq \varepsilon$.
For arbitrary $\psi\in L^\infty(\mu)$, we can then write

\begin{equation}\label{eq-psiA}
\begin{split}
\bE(\psi v)&=\bE_\mu(\psi A_{n,m})+O(\ve)=\bE_\mu(\psi \bE_\mu(A_{n,m}\;|\;x))+O(\varepsilon)\\
&=\bE_\mu( \bE_\mu(\psi\;|\;x)A_{n,m})+O(\varepsilon)=\bE_\mu(\psi \bE_\mu( v\;|\;x))+O(\ve).
\end{split}
\end{equation}
By the arbitrariness of $\ve$ and $\psi$ it follows $v=\bE_\mu( v\;|\;x)$. But this implies that $v\circ F_0=v\circ F=e^{i\theta\vf_0}v$, but this has only the trivial solution $v=0$ (see~\cite[Theorem 3.1]{AaronsonDenker01}).
\end{proof}

\subsection{Verifying (H4): bounds for $\|R_n\|_{\cB}$.}

The next result shows that the strongest form of (H4), that is (H4)(iii), holds.

\begin{lemma}\label{lem:asymptotic}
For each $n\in\bN$ we have the bound
\[
\|R_n \|_{\cB}\leq C n^{-\beta-1}.
\]
\end{lemma}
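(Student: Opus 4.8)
The operator $R_n$ is nothing but the ``branch $Y_n$'' component of the full transfer operator $R=R(1)$: for $h\in C^1$ the change of variables \eqref{eq:transfer-L1} restricted to $Y_n=X_n\times[0,1]$ gives $R_n h=\Id_{Y_n'}\,(h|_{Y_n})\circ F^{-1}\,\det(DF^{-1})$ with $Y_n'=F(Y_n)$. The whole decay will be extracted from two scalar facts about the underlying Pomeau--Manneville map, both recalled in Section~\ref{sec-MISyst} (and going back to~\cite{LiveraniSaussolVaienti99}): since $F_0|_{X_n}=f_0^{\,n}|_{X_n}$ one has $\sup_{X_n}|F_0'|^{-1}=\sup_{X_n}|(f_0^{\,n})'|^{-1}\le C n^{-(\beta+1)}$, and, because the invariant density of $f_0$ is bounded above and below on $(1/2,1]$, $m(Y_n)=m(X_n)\le C n^{-(\beta+1)}$.

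The plan is to run the proof of Proposition~\ref{prop-LSinequality} with the iterate equal to $1$ (so the relevant partition is $\cY_1=\{Y_j\}_j$) and to keep, in every sum over $j$ occurring there, only the single element $Y_n$; all the estimates in that proof are term-by-term over $\cY_1$, so this truncation is legitimate. Concretely, given an admissible leaf $W=W(x)$, the set $F^{-1}(W)\cap Y_n$ is the single vertical leaf $W(g_n(x))$, where $g_n$ is the inverse branch of $F_0$ with return time $n$, and $|F_0'|$ is constant along vertical leaves. For $h\in C^1$ the computations in the proof of Proposition~\ref{prop-LSinequality} give
\begin{align*}
\Big|\int_W R_n h\,\phi\Big|&\le |F_0'(g_n(x))|^{-1}\,\|\phi\circ F\|_{C^1(W(g_n(x)))}\,\|h\|_{\cB_w}\le C\,n^{-(\beta+1)}\,\|h\|_{\cB_w}\qquad(|\phi|_{C^1(W)}\le1),\\
\Big|\int_W R_n h\,\phi\Big|&\le |F_0'(g_n(x))|^{-1}\,\|\phi\circ F\|_{C^q(W(g_n(x)))}\,\|h\|_{s}\le C\,n^{-(\beta+1)}\,\|h\|_{s}\qquad(|\phi|_{C^q(W)}\le1),
\end{align*}
where we used $\|\phi\circ F\|_{C^q(W(g_n(x)))}\le|\phi|_{C^q}$ for $q\le1$ since $|\partial_yG|<1$; while the strong unstable norm is estimated exactly as in~\eqref{eq:strong-ly}, whose per-element bound is $C\lambda^{-1}|x-y|\,\|h\|_{\cB}\,|F_0'(g_n(x))|^{-1}\le C\,n^{-(\beta+1)}|x-y|\,\|h\|_{\cB}$. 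Hence $\|R_n h\|_{\cB}\le C n^{-(\beta+1)}\|h\|_{\cB}$ for all $h\in C^1$. Since $R_n(C^1)\subset\cB$ — this is Lemma~\ref{lem:R-well-def} with the single strip $Y_n'$ in place of $F(Y)=\bigcup_j Y_j'$ — and $C^1$ is dense in $\cB$ in the strong norm, $R_n$ extends uniquely to a bounded operator on $\cB$ obeying the same bound, which is the assertion.

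The only genuinely non-automatic point is bookkeeping: one has to check that each of the three norm estimates in the proof of Proposition~\ref{prop-LSinequality} is a sum of separately controlled contributions indexed by the elements of $\cY_1$, each majorised by $m(Y_j)$, resp. by $|F_0'|^{-1}$ on the corresponding preimage leaf — so that discarding all elements but $Y_n$ is harmless — and to notice that in the single-step setting the polynomial decay is supplied not by the stable contraction $\lambda^{-1}$ (which contributes only a bounded factor) but by $|(f_0^{\,n})'|^{-1}|_{X_n}\le C n^{-(\beta+1)}$, playing here the role that $\lambda^{-nq}$ plays in the iterated Lasota--Yorke inequality.
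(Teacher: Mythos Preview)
Your proof is correct and follows essentially the same route as the paper's own argument: both restrict the Lasota--Yorke computation of Proposition~\ref{prop-LSinequality} to the single branch $Y_n$, use that $F_0'$ is constant along vertical leaves so that the test function $|F_0'|^{-1}\phi\circ F$ has $C^q$-norm bounded by $|F_0'(g_n(x))|^{-1}\le C n^{-(\beta+1)}$, and then treat the unstable norm exactly as in~\eqref{eq:strong-ly} with the same decomposition and the distortion estimate~\eqref{eq-dist-2}. Your additional remarks on the weak norm and on the extension from $C^1$ to $\cB$ by density are correct housekeeping that the paper leaves implicit.
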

\begin{proof}
Note that $\vf$ is constant on $\cY_1$, hence there exists $j_n$ such that $\vf|_{Y_{j_n}}=n$. Thus for each $\|\phi\|_{C^q}\leq 1$ and $W\in\Sigma$,
\[
\left|\int_W (R_n h)\phi\, dm\right|\leq 
 \left|\int_{W_{j_n}}  h |F_0'|^{-1}\phi\circ  F\,dm\right|\leq \|h\|_s\left|F_0'\right|^{-1}_{C^q(W_{j_n})} \leq C n^{-\beta-1}.
\]
Next, let $x,y\in[0,1]$, $|x-y|\leq\ve_0$, and $\phi\in C^1$. Setting $\Phi_{n}=|F_0'|^{-1}(\eta_j)\cdot\phi\circ  F|_{W(\eta_j)}$,
\[
\begin{split}
&\left|\int_{W(x)} R_nh\phi\, dm-\int_{W(y)} R_nh\phi\, dm\right|\leq\left|\int_{W(\xi_{j_n})} h \Phi_{n}\, dm-\int_{W(\eta_{j_n})} h\Phi_{n}\, dm\right|\,\\
&\quad+\left|\int_{W(\xi_{j_n})} h \left[\,|F_0'(\xi_{j_n})|^{-1}-|F_0'(\eta_{j_n})|^{-1}\right]\phi\circ  F\, dm\right|\,\\
&\leq  |x-y|\left[\lambda^{-1}\|h\|_u+C\|h\|_s\right]|(f_0^n)'(\xi_{j_n})|^{-1}\leq C n^{-\beta-1}|x-y| \|h\|_{\cB},
\end{split} 
\]
where, in the last line, we have used \eqref{eq-dist-2}.
\end{proof}

\subsection{Verifying (H2)}
We note that the connected components of $\varphi^{-1}(n)$ satisfy the assumption on the set $E$ in the statement of
Proposition~\ref{prop-embed}. Hence,
\[
\left|\int_E h\, dm\right|\leq  \|h\|_{\cB_w} m(E),
\]
and (H2) (with $\ggen=1$) follows by Remark \ref{rmk:mu}, since $\frac {dm}{d\mu_0}$ is known to be bounded on $Y$.

\subsection{Verifying (H3)}
\label{subs-tail}
To conclude we must verify (H3). Again the strategy is to reduce to the one dimensional map $F_0$. Indeed, consider $\psi$ such that $\psi(x,y)=\bE_\mu(\psi\;|\;x)$, then
\[
\mu(\psi\circ F_0)=\mu(\psi\circ F)=\mu(\psi)
\]
this implies that the marginal of $\mu$ is the invariant measure $\mu_0$ of the map $F_0$.
Since $\vf$ does not depend on $y$, (H3) holds for $F$ since it holds for $F_0$ (see, for instance, \cite{LiveraniSaussolVaienti99}).

The argument above  together with the the tail expansion of  $\mu_0(\varphi_0>n)$ ( associated with $f_0$) obtained in~\cite{MT, Terhesiu12}
shows that the conditions on the tail behavior $\mu(\varphi>n)$ (associated with
the $f$ ) stated in  Theorem~\ref{lemma-HOTn}, (i)-(ii) are satisfied.

\section{Banach spaces estimates in the non Markov setting}
\label{sec-nonMarkov}

Let $f:[0,1]^2\to [0,1]^2$ be  the non Markov map~\eqref{eq-2DnM}  introduced in subsection~\ref{subsect-classex}.  Let $Y_0=Y=(1/2,1]\times [0,1]$ and let  $\varphi:Y\to\Z^{+}$ be the return time to $Y$.
Let $F=f^{\varphi}$  be the first return map and write $F^n=(F_{0}^n,G_n)$, for all $n\in\bN$ and 
$F_{0}=f_{0}^{\varphi_{0}}$, where 
$\varphi_{0}$ is the first return time of $f_{0}$ to $X_0=(1/2,1]$. In this section we show that  $F$ satisfies (H1--H5)  for some appropriate function spaces $\cB, \cB_w$ described below.

In what follows we use the notation introduced in Section~\ref{sec-setup} for the study of the Markov example~\eqref{eq-2DLSV}  keeping in mind the new definition of $f_0, f, F_0, F$. Note that, for functions that depend only on $x$, the Banach space in the previous section was essentially reducing to the space of Lipsichtz functions. Here instead it will reduce to $BV$. This is natural, since $BV$ is the standard Banach space on which to analyse the spectrum of the transfer operator of a piecewise expanding map.

\subsection{Banach spaces}\label{subsec-Bsp_nM}

Consider the set of test functions $\cD_q=\{\phi\in L^\infty([0,1]^2,\bC)\;|\;\|\phi(x,\cdot)\|_{\cC^q}\leq 1, \; \textrm{ for almost all $x$}\}$ and
$\cD^0_q=\cD_q\cap \textrm{Lip}$.\footnote{ By  $\textrm{Lip}$ we mean the set of Lipsichtz functions.} With this, given $q\in (\frac{1+\beta}{2+\beta},1]$, for all $h\in BV$ we define the norms
\[
\begin{split}
&\|h\|_{\cB_w}=\sup_{\phi\in \cD_{1+q}}\int_Y h\cdot \phi\\
&\|h\|_0=\sup_{\phi\in \cD_{q}}\int_Y h\cdot \phi\\
&\|h\|_1=\sup_{\phi\in \cD^0_{1+q}}\int_Y h\cdot \partial_x\phi\\
\end{split}
\]
and set $\|h\|_{\cB}=\|h\|_1+\|h\|_0$. 
\begin{lemma}\label{lem:zero-norm}
For each $h\in BV$ we have
\[
\|h\|_{\cB_w}\leq\|h\|_{\cB}\leq \|h\|_{BV}.
\]
\end{lemma}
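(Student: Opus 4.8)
The statement to prove is Lemma~\ref{lem:zero-norm}: for each $h\in BV$, $\|h\|_{\cB_w}\leq\|h\|_{\cB}\leq\|h\|_{BV}$. The plan is to work directly from the definitions of the three norms $\|\cdot\|_{\cB_w}$, $\|\cdot\|_0$, $\|\cdot\|_1$ given in terms of the test-function classes $\cD_q$ and $\cD^0_q$, together with the elementary fact that $\cD^0_{1+q}\subset\cD_{1+q}\subset\cD_q$ for $q\le 1$ (since a larger H\"older exponent gives a stronger, hence more restrictive, normalization, and the Lipschitz constraint in $\cD^0$ only shrinks the class further).

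First I would establish the left inequality $\|h\|_{\cB_w}\leq\|h\|_{\cB}$. Since $\cD_{1+q}\subset\cD_q$, for every admissible $\phi$ appearing in the definition of $\|h\|_{\cB_w}$ we have $\int_Y h\cdot\phi\le\|h\|_0\le\|h\|_0+\|h\|_1=\|h\|_{\cB}$; taking the supremum over such $\phi$ gives $\|h\|_{\cB_w}\leq\|h\|_{\cB}$. (The term $\|h\|_1$ is not even needed here, but it is harmless.)

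Next I would establish the right inequality $\|h\|_{\cB}\leq\|h\|_{BV}$, i.e.\ $\|h\|_0+\|h\|_1\leq\|h\|_{BV}$. For $\|h\|_0$: given $\phi\in\cD_q$ one has $\|\phi\|_{L^\infty}\le 1$, so $\int_Y h\cdot\phi\le\|h\|_{L^1}\le\|h\|_{BV}$ — in fact one expects the constant to be split between the two pieces, so more precisely one uses a Fubini-type decomposition, fixing $x$ and integrating in $y$ first. For $\|h\|_1$: given $\phi\in\cD^0_{1+q}$ (Lipschitz, with $\|\phi(x,\cdot)\|_{C^{1+q}}\le 1$ so in particular $|\partial_x\phi|\le 1$ suitably interpreted), integrate $\int_Y h\,\partial_x\phi$ by parts in the $x$ variable slicewise: $\int h\,\partial_x\phi\,dx = -\int (\partial_x h)\,\phi\,dx + \text{boundary terms}$, which is controlled by the total variation of $h$ in $x$ plus the boundary contribution, all dominated by $\|h\|_{BV}$. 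Summing the two slice estimates and integrating in the remaining variable via Fubini yields $\|h\|_{\cB}\le\|h\|_{BV}$ (possibly after checking that the $BV$ norm on the square is taken so as to dominate both the $L^1$ norm and the slicewise variation; this is the standard definition).

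The main obstacle, such as it is, is purely bookkeeping: one must be careful about the precise definition of $\|h\|_{BV}$ on the unit square (slicewise variation in $x$ plus $L^1$ norm, versus the two-dimensional variation), about the boundary terms in the slicewise integration by parts for $\|h\|_1$, and about how the single constant $1$ in the $BV$ norm is distributed between the estimates for $\|h\|_0$ and $\|h\|_1$. None of this is deep; the inequalities are soft consequences of the inclusions of test-function classes and of duality between $BV$ and Lipschitz/$C^0$ functions. I would keep the argument to a few lines, invoking Fubini and the one-dimensional $BV$--duality inequality $\bigl|\int u\,\psi'\bigr|\le \mathrm{Var}(u)\,\|\psi\|_\infty$ for the $\|h\|_1$ bound.
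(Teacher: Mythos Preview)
Your approach to the first inequality and to the bound $\|h\|_0\le\|h\|_{BV}$ is exactly the paper's: use $\cD_{1+q}\subset\cD_q$ for the former, and $|\phi|_\infty\le 1$ together with $\|h\|_{L^1}\le\|h\|_{BV}$ for the latter.

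For the bound on $\|h\|_1$ you take a different route than the paper, and there is a small confusion worth flagging. You write that ``$\|\phi(x,\cdot)\|_{C^{1+q}}\le 1$ so in particular $|\partial_x\phi|\le 1$'': this does not follow, since the $C^{1+q}$ normalization in $\cD^0_{1+q}$ is on the $y$-slices and says nothing about $\partial_x\phi$. What it does give is $|\phi|_\infty\le 1$, and that is all one needs. The paper exploits this in a single line: for $\phi\in\cD^0_{1+q}$ set $\Phi=(\phi,0)$, so that $\partial_x\phi=\operatorname{div}\Phi$ and $\|\Phi\|_{C^0}\le 1$; then
\[
\int_Y h\,\partial_x\phi=\int_Y h\,\operatorname{div}\Phi\le \sup_{\|\Psi\|_{C^0}\le 1}\int_Y h\,\operatorname{div}\Psi=\|h\|_{BV},
\]
using the two-dimensional definition of the $BV$ seminorm directly. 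Your slicewise integration-by-parts plan would also work (via the $BV$ slicing theorem and the $L^1$ boundary-trace estimate), but it is noticeably heavier machinery for the same conclusion, and the boundary terms you mention do require the trace inequality rather than vanishing for free. The divergence trick avoids all of this.

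One final remark: both your argument and the paper's actually yield $\|h\|_0\le\|h\|_{BV}$ and $\|h\|_1\le\|h\|_{BV}$ separately, hence $\|h\|_{\cB}\le 2\|h\|_{BV}$ rather than the constant $1$ stated; this is immaterial for the continuous embedding, and your instinct that the constant bookkeeping is loose is correct.
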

\begin{proof}
The first follows from $\|h\|_{\cB_w}\leq\|h\|_0$, which is obvious since the sup is taken on a larger set of functions. To see the second note that, on the one hand, for each $\phi\in \cD_{q}$ 
\[
\left|\int_Yh\phi\right|\leq \|h\|_{L^1}\leq \|h\|_{BV}.
\]
While, on the other hand, for all $\phi\in \cD_{1+q}^0$, let $\Phi=(\phi,0)\in C^0([0,1]^2,\bC^2)$. Then, for each $h\in BV$, 
\[
\|h\|_1=\sup_{\phi\in\cD_{1+q}^0}\int h\partial_x\phi=\sup_{\phi\in\cD_{1+q}^0}\int h\operatorname{div}\Phi
\leq \sup_{\|\Psi\|_{C^0}\leq 1}\int h\operatorname{div}\Phi=\|h\|_{BV},
\]
where, in the last equation, we have used the definition of the $BV$ norm in any dimension \cite{EG}.
\end{proof}

We can then define the Banach spaces $\cB_w$, $\cB$ obtained, respectively, by closing $BV$ with respect to $\|\cdot\|_{\cB_w}$ and $\|\cdot\|_{\cB}$. Note that such a definition (together with Lemma \ref{lem:zero-norm}) implies $BV\subset\cB\subset\cB_w$.
In fact, the next Lemma gives a more stringent embedding property.
\begin{lemma}
The unit ball of $\cB$ is relatively compact in $\cB_w$.
\end{lemma}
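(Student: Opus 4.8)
The plan is to mimic the compactness argument for the Markov case (the proof of Lemma~\ref{lemma-embed}), adapting it to the present $BV$-based spaces. It suffices to show the unit ball $B_1$ of $\cB$ is totally bounded in the $\|\cdot\|_{\cB_w}$ norm: given $\ve>0$ I must cover $B_1$ by finitely many $\ve$-balls in $\cB_w$. The first ingredient is a quantitative interpolation-type estimate: for $h\in BV$ and $\phi\in\cD_{1+q}$, approximate $\phi$ (in the relevant weaker topology) by a Lipschitz test function $\phi_\delta\in\cD^0_{1+q}$, so that $|\int_Y h(\phi-\phi_\delta)|$ is controlled by $\delta$ times a combination of $\|h\|_0$ and $\|h\|_1$. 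Concretely, mollify $\phi$ in the $x$-variable at scale $\delta$: $\phi-\phi_\delta$ is small in $L^1_x$ of a $C^{q}_y$-norm (handled by $\|h\|_0$), while $\partial_x\phi_\delta$ exists with $\|\partial_x\phi_\delta\|\lesssim\delta^{-1}$ (handled by $\|h\|_1$). This is the analogue of the splitting $\phi=\hat\phi_j+\bar\phi_j$ used throughout Section~\ref{sec-setup}, and it lets one trade a general $\cB_w$-test function for a Lipschitz one at a controlled cost.

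Next I would discretize. Fix $\delta=\delta(\ve)$ small. On the one hand, the family of admissible test functions $\phi_\delta\in\cD^0_{1+q}$ with $x$-mollification at scale $\delta$ is, after restriction, precompact: by Ascoli–Arzelà (using the uniform $C^{1+q}_y$ control and the $\delta^{-1}$-Lipschitz control in $x$) one extracts a finite $\ve$-net $\{\phi_\delta^{(1)},\dots,\phi_\delta^{(M_\ve)}\}$ in the $C^0([0,1]^2)$-topology. On the other hand, the map $K_\ve:\cB\to\bR^{M_\ve}$, $h\mapsto\big(\int_Y h\,\phi_\delta^{(k)}\big)_{k}$ is bounded with values in a bounded subset of $\bR^{M_\ve}$, hence its image of $B_1$ is precompact; choose points $a^1,\dots,a^N\in\bR^{M_\ve}$ so that the sets $U_{r,\ve}=\{h\in\cB:|\int_Y h\,\phi_\delta^{(k)}-a^r_k|\le\ve\ \forall k\}$ cover $B_1$. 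For $h_1,h_2\in U_{r,\ve}$ and any $\phi\in\cD_{1+q}$, one finds $k$ with $|\int_Y(h_1-h_2)\phi|\le|\int_Y(h_1-h_2)(\phi-\phi_\delta)|+|\int_Y(h_1-h_2)(\phi_\delta-\phi_\delta^{(k)})|+2\ve\lesssim\ve(1+\|h_1-h_2\|_{\cB})$, so each $U_{r,\ve}$ sits in an $O(\ve)$-ball in $\cB_w$; a final pass through the $\|\cdot\|_1$-test functions shows the weak norm is genuinely controlled. Rescaling $\ve$ finishes the argument.

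The main obstacle is the first step: making the mollification estimate clean in the anisotropic setting, where test functions are $C^{1+q}$ in $y$ but only $L^\infty$ in $x$ (for $\|\cdot\|_0,\|\cdot\|_{\cB_w}$) or differentiated once in $x$ (for $\|\cdot\|_1$). One must verify that the $x$-mollification of $\phi\in\cD_{1+q}$ stays in $\cD_{1+q}$ (preserving the $C^{1+q}_y$ bound uniformly, which is immediate since convolution in $x$ commutes with $y$-derivatives and averages the $C^q_y$-seminorm), and that the cost term pairs correctly with the two pieces of $\|h\|_{\cB}$ — precisely the reason the norm was defined with a $\partial_x\phi$ tested against $h$. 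The condition $q\in(\tfrac{1+\beta}{2+\beta},1]$ plays no role here; it is only needed later for the transfer-operator estimates, so the compactness is purely soft analysis once the mollification lemma is in place.
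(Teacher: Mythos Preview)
Your mollification step has a genuine gap. You claim $|\int_Y h(\phi-\phi_\delta)|$ is controlled by $\|h\|_0$ because ``$\phi-\phi_\delta$ is small in $L^1_x$ of a $C^q_y$-norm''. But $\|h\|_0$ is defined by testing against $\psi$ with $\|\psi(x,\cdot)\|_{C^q}\le 1$ for \emph{almost every} $x$---an $L^\infty_x(C^q_y)$ constraint, not $L^1_x$. Since $\phi\in\cD_{1+q}$ carries no continuity in $x$, the difference $\phi-\phi_\delta$ is \emph{not} small in $L^\infty_x(C^q_y)$; its $L^1_x$ smallness is useless here, as it would require an $L^\infty$-type bound on $h$ that $\|h\|_{\cB}$ does not provide. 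Your parenthetical about $\|h\|_1$ is also off: $\|h\|_1$ pairs $h$ with $\partial_x\psi$, so it bounds $\int_Y h\,\partial_x\phi_\delta$, not $\int_Y h\,\phi_\delta$. A secondary issue: an Ascoli--Arzel\`a net in $C^0([0,1]^2)$ is too coarse; you need closeness in $\sup_x\|\cdot(x,\cdot)\|_{C^q}$ to invoke $\|h\|_0$ on the residual (this is fixable since the $\phi_\delta$ are Lipschitz from $[0,1]_x$ into $C^{1+q}_y$ and $C^{1+q}\hookrightarrow C^q$ compactly, but it should be said).

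The paper bypasses the problem by taking the antiderivative first: set $\Psi(x,y)=\int_{1/2}^x\phi(z,y)\,dz$, so $\int_Y h\phi=\int_Y h\,\partial_x\Psi$ with $\Psi$ automatically Lipschitz in $x$ and $\|\Psi(x,\cdot)\|_{C^{1+q}}\le\tfrac12$. Approximating $\Psi$ by a piecewise-linear-in-$x$ function $\theta$ on a grid of width $\ve$ gives $\|(\Psi-\theta)(x,\cdot)\|_{C^{1+q}}\le C\ve$ \emph{uniformly in $x$} (because one integrates over intervals of length $\le\ve$), so $\ve^{-1}(\Psi-\theta)\in C\cD^0_{1+q}$ and $\int_Y h\,\partial_x(\Psi-\theta)=O(\ve)\|h\|_1$. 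The main term $\partial_x\theta$ is piecewise constant in $x$; a second piecewise-linear approximation in $y$ lands in a finite-dimensional set, with the $y$-error controlled by $\|h\|_0$. Your mollification can in fact be repaired by the same device---write $\phi-\phi_\delta=\partial_x\Phi_\delta$ and check $\|\Phi_\delta(x,\cdot)\|_{C^{1+q}}=O(\delta)$ uniformly via a telescoping integral---but that is the antiderivative mechanism, not the $L^1_x/\|h\|_0$ pairing you propose.
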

\begin{proof}
For each $\phi\in\cD^0_{1+q}$ define $\Psi(x,y)=\int_{1/2}^x\phi(z,y)dz$. Next, for each $\ve>0$ define $a_k=\frac 12+k\ve$ and consider the piecewise linear function
\[
\theta(x,y)=\int_{1/2}^{a_k}\phi(z,y)dy+\frac{x-a_k}\ve\int_{a_k}^{a_{k+1}}\phi(z,y)dz.
\]
One can easily check that $\ve^{-1}(\Psi-\theta)\in\cD_{1+q}^0$. Thus, for each $h\in BV$ belonging to the unit ball of $\cB$, we have
\[
\int_Yh \phi=\int_Y h\partial_x\Psi=\int_Y h\partial_x(\Psi-\theta)+\int_Y h\partial_x\theta=\int_Y h\partial_x\theta+ O(\ve).
\]
Setting $\alpha_k(y)=\ve^{-1}\int_{a_k}^{a_{k+1}}\phi(z,y)dz$ we have $\|\alpha_k\|_{\cC^{1+q}}\leq 1$ and 
\[
\partial_x\theta(x,y)=\sum_k\Id_{[a_k,a_{k+1}]}(x)\alpha_k(y).
\]
We can set $b_j=\ve j$ and define
\[
\begin{split}
&\ell_{k}(y)=\alpha_k(b_j)+\frac{\alpha_k(b_{j+1})-\alpha_k(b_j)}{\ve}(y-b_j)\quad \quad \textrm{for all }y\in[b_j,b_{j+1}]\\
&\ell(y)=\sum_k\Id_{[a_k,a_{k+1}]}(x)\ell_k(y).
\end{split}
\]
Note that $\ve^{-1}[\partial_x\theta-\ell]\in \cD_q$, thus
\[
\int_Yh \phi=\int_Y h\ell+ O(\ve).
\]
To conclude note that, by construction, for each $\ve$ the functions $\ell$ belong to a uniformly bounded set in a finite dimensional space (hence are contained in a compact set). The Lemma follows by the same arguments used at the end of Lemma \ref{lemma-embed}.
\end{proof}

\subsection{ Lasota-Yorke type inequality}

In the remaining of the paper, $R$ stands for the transfer operator associated with $F$ defined by~\ref{eq:transfer-L1} (with the current definition of $F$).
With this specified we state

\begin{lemma} \label{lem:LY-disc} For each $h\in BV$ and $z\in \bD$, we have
\[
\begin{split}
&\|R(z) h\|_{\cB_w}\leq |z|\|h\|_{\cB_w}\\
&\|R(z)h\|_{\cB}\leq \max\{2\lambda^{-1}, \lambda^{-q}\}|z| \|h\|_{\cB}+ C|z|\|h\|_{\cB_w}.
\end{split}
\]
\end{lemma}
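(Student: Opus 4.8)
The plan is to prove the Lasota--Yorke inequality of Lemma~\ref{lem:LY-disc} by estimating each of the three norms $\|R(z)h\|_{\cB_w}$, $\|R(z)h\|_0$, $\|R(z)h\|_1$ separately, pulling back test functions through $F$ and exploiting the skew-product structure together with the contraction/expansion estimates~\eqref{eq:extra-cond0}, \eqref{eq:extra-cond1}, \eqref{eq:extra-cond} and the bound $|f_0'|>2$ (hence $|F_0'|>2$, and the one-dimensional expanding distortion~\eqref{eq-dist}, \eqref{eq-dist-2}). It suffices to prove the inequalities for $h\in BV$ (or even $h\in C^1$) and then extend by density, exactly as was done at the end of Section~\ref{sec-setup}. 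Throughout I would use the change-of-variable formula~\eqref{eq:transfer-L1}: $R(z)h = z^\varphi\Id_{F(Y)} h\circ F^{-1}\det(DF^{-1})$ summed over the inverse branches indexed by the return time, noting $|z^\varphi|\le|z|$ since $\varphi\ge1$.

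For the weak norm, I would take $\phi\in\cD_{1+q}$ and write $\int_Y (R(z)h)\phi = \sum_j \int_{Y_j} z^j h\,(\det DF)^{-1}\,\phi\circ F$; the key point is that the push-forward test function $\psi_j(x,y):=(F_0'(x))^{-1}\,\partial_y G(x,y)^{-1}\cdots$ — more precisely the function obtained after the $y$-change of variable, which is $\phi\circ F$ reparametrized, divided by $F_0'$ — has $C^{1+q}$ norm in $y$ controlled by $C\,\|F_0'\|^{-1}_{L^\infty(Y_j)}$ times a constant, using~\eqref{eq:extra-cond1} to control $\partial_y$ of the reparametrization and the fact that $\partial_y(\phi\circ F)$ picks up a factor $\partial_y G$ which is bounded (indeed small). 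Summing $\sum_j \|F_0'\|^{-1}_{L^\infty(Y_j)} = \sum_j m(Y_j) \le 1$ gives $\|R(z)h\|_{\cB_w}\le |z|\,\|h\|_{\cB_w}$; crucially there is no loss, because each branch contributes its measure and the total is bounded by $m(Y)=1$ rather than merely summable-with-a-constant, matching the clean constant $1$ in the statement. The $\|\cdot\|_0$ estimate (with $\cD_q$ instead of $\cD_{1+q}$) follows the same pattern but now I would split the pushed-forward test function on each branch into its $y$-average $\bar\phi_j(y)$ and the remainder, the average part being absorbed into $\|h\|_{\cB_w}$ and the oscillation part gaining a factor $\lambda^{-q}$ from the $C^q$-seminorm of $\phi\circ F$ in $y$ (since $|\partial_y G|\le\lambda^{-1}$); this produces the $\lambda^{-q}|z|\|h\|_{\cB}$ term.

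The genuinely delicate estimate is the strong norm $\|R(z)h\|_1$, which tests against $\partial_x\phi$ for $\phi\in\cD^0_{1+q}$. Here one integrates by parts / differentiates the expression $\sum_j z^j\int_{Y_j} h\,(\det DF)^{-1}\phi\circ F$ with respect to the base point $x$; the $x$-derivative hits three factors: (a) the inverse branch of $F_0$, producing a factor $(F_0')^{-1}$ which combines with $h$ and is controlled by $\|h\|_1$ after recognizing the result as testing $h$ against an $x$-derivative of an admissible test function (this gives the $2\lambda^{-1}$-type gain once one checks $\sum_j \|(F_0')^{-1}\|_{L^\infty(Y_j)}\cdot(\text{growth of }\phi\circ F)$, using $|F_0'|\ge 2$ so that a single branch contributes $\le \tfrac12$ — hence the constant $2\lambda^{-1}$ is really $\lambda^{-1}$ from the stable direction times the bound $2\cdot\tfrac12$ reorganized, or more honestly it comes from the worst of the contraction rate and the expansion-induced factor); (b) the Jacobian $\det DF^{-1}$, whose $x$-derivative is controlled by distortion~\eqref{eq-dist-2} and~\eqref{eq:extra-cond}, contributing to $\|h\|_0$ hence the $C|z|\|h\|_{\cB_w}$ term after a further split; (c) the factor $\partial_x(\phi\circ F) = (\partial_x\phi)\circ F\cdot F_0' + (\partial_y\phi)\circ F\cdot\partial_x G$, where the first summand's $F_0'$ exactly cancels the $(F_0')^{-1}$ from the Jacobian leaving a clean $\partial_x\phi$ to absorb into $\|h\|_1$ (with the stable-contraction gain $\lambda^{-1}$ coming from how $\phi$ varies along unstable-like directions versus the base), and the second summand is handled by~\eqref{eq:extra-cond0}–\eqref{eq:extra-cond} which say $|\partial_x G|/|F_0'|$ and its $y$-derivative are bounded, so this term is controlled by $\|h\|_0$ and feeds into $\|h\|_{\cB_w}$.

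The main obstacle I anticipate is exactly bookkeeping the constant in front of $\|h\|_{\cB}$ in the strong-norm estimate: one must show the coefficient is $\max\{2\lambda^{-1},\lambda^{-q}\}$ and in particular strictly less than $1$, which requires that the summation over inverse branches of $F_0$ of the relevant $C^0$-norms of $(F_0')^{-1}\cdot(\phi\circ F\text{-type factors})$ does not blow up — this uses the uniform expansion $|F_0'|>2$ on $(1/2,1]$ to make each individual branch contract, the bounded-distortion estimates~\eqref{eq-dist}/\eqref{eq-dist-2}, and critically the non-Markov hypothesis~\eqref{eq:extra-cond0} (stable contraction overbeats expansion) to control the cross term $\partial_x G$ uniformly; without~\eqref{eq:extra-cond0} the $\partial_x G/F_0'$ term would not be bounded and the argument would fail. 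A secondary technical point is that here, unlike the Markov case, there are discontinuities that need not respect a Markov partition, so one must be careful that the admissible leaves (full vertical segments) still pull back to finitely-or-countably-many full vertical segments under $F$ — which holds because the stable foliation is the vertical foliation and $F$ maps it to itself — and that the integration-by-parts in the $\|\cdot\|_1$ estimate does not produce uncontrolled boundary terms, which is ensured by restricting to $\phi\in\cD^0_{1+q}$ (Lipschitz) and using the $BV$-in-all-dimensions characterization from Lemma~\ref{lem:zero-norm}. Once these are in place, iterating is unnecessary for the statement as written (it is $n=1$), so the proof concludes directly.
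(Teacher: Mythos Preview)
Your proposal conflates the leaf-wise norms of Section~\ref{sec-setup} with the norms of Section~\ref{sec-nonMarkov}. Here the norms are $\int_Y h\cdot\phi$ against test-function classes $\cD_q,\cD_{1+q},\cD^0_{1+q}$ defined by a pointwise-in-$x$ condition $\|\phi(x,\cdot)\|_{C^q}\le 1$; there are no admissible leaves and no $\sum_j m(Y_j)$ summation. By duality $\int_Y R(z)h\cdot\phi=\int_Y h\,z^\vf\phi\circ F$ with \emph{no} Jacobian factor, so your formula $\sum_j\int_{Y_j}z^j h\,(\det DF)^{-1}\phi\circ F$ is incorrect. For the weak and $\|\cdot\|_0$ norms the paper simply checks that $z^\vf\phi\circ F$ (respectively $z^\vf\phi\circ F-\theta_z$ with $\theta_z(x,y)=z^{\vf_0(x)}\phi\circ F(x,0)$) lies in $|z|\cD_{1+q}$ (respectively $\lambda^{-q}|z|\cD_q$), which is a one-line verification using~\eqref{eq:stable-contraction} and~\eqref{eq:extra-cond}; no branch decomposition is needed.

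The real gap is in your treatment of $\|\cdot\|_1$. After writing $(\partial_x\phi)\circ F=\partial_x\bigl(\tfrac{\phi\circ F}{F_0'}\bigr)-\tfrac{\partial_y\phi\circ F\cdot\partial_xG}{F_0'}-\phi\circ F\,\partial_x(F_0')^{-1}$ (equation~\eqref{eq:deriv-ly}), the last two terms are handled as you indicate. But the first term requires testing $h$ against $\partial_x\Psi$ with $\Psi=z^\vf\phi\circ F/F_0'$, and to bound this by $\|h\|_1$ one needs $\Psi\in\cD^0_{1+q}$, i.e.\ $\Psi$ globally Lipschitz. It is not: $\vf$ and $F_0'$ jump across the boundaries of the $Y_{j,m}$, so $\Psi$ is discontinuous regardless of the regularity of $\phi$. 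The paper's proof introduces on each $Y_{j,m}=[a_{j,m},b_{j,m}]\times[0,1]$ a linear-in-$x$ interpolant $\ell_{j,m}$ matching $\Psi$ at the endpoints, so that $\Psi-\ell$ is continuous and $\tfrac{\lambda}{2|z|}(\Psi-\ell)\in\cD^0_{1+q}$ (this is where the constant $2\lambda^{-1}$ arises, via $|F_0'|>2$ and the lower bound $|b_{j,m}-a_{j,m}|\ge C|F_0'|^{-1}$), while $\partial_x\ell$ is constant in $x$ on each strip and lies in $C\cD_{1+q}$, feeding into $\|h\|_{\cB_w}$. Your remark that ``boundary terms are ensured by $\phi\in\cD^0_{1+q}$'' misses the point: the obstruction is the discontinuity of the \emph{pulled-back} test function, not of $\phi$ itself, and this counter-term construction is the essential non-Markov ingredient.
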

\begin{proof}
For each $\phi\in\cD_{1+q}$ and $h\in BV$ we have
\[
\int_Y R(z) h\cdot \phi=\int_{Y}h z^{\vf}\phi\circ F.
\]
Note that, for almost all $x\in (1/2,1]$, $\psi_x(\cdot)=z^{\vf_0(x)}\phi\circ F(x, \cdot)$ is a $C^{1+q}$ function by condition \eqref{eq:extra-cond}.  Moreover, 
$\|\psi_x\|_{\cC^{1+q}}\leq |z|$ by the stable contraction of $F$. Hence, the first inequality follows.

If we have  $\phi\in\cD_{q}$
\[
\int_Y R(z) h\cdot \phi=\int_{Y}h (z^{\vf}\phi\circ F-\theta_z)+\int_{Y}h \theta_z,
\]
where $\theta_z(x,y)=z^{\vf_0(x)}\phi\circ F(x,0)$. Note that, for almost all $x\in (1/2, 1]$, $\|z^{\vf(x,\cdot)}\phi\circ F(x,\cdot)-
\theta_z(x,\cdot)\|_{\cC^q}\leq \lambda^{-q}|z|$, while $\|\theta_z\|_{\cC^{1+q}}\leq |z|$. Hence, $\lambda^q|z|^{-1}[z^{\vf}\phi\circ F-\theta_z]\in \cD_{q}$ and $|z|^{-1}\theta_z\in \cD_{1+q}$. Accordingly,
\[
\left|\int_Y R(z) h\cdot \phi\right|\leq\lambda^{-q}|z|\|h \|_0+C|z|\|h\|_{\cB_w}.
\]
To conclude, let $\phi\in \cD_{1+q}^0$. Then
\[
\int_Y R(z) h\cdot \partial_x\phi=\sum_j\int_{Y_j}h z^{\vf}(\partial_x\phi)\circ F.
\] 

Since $F\in C^2$ in each $Y_j$, we compute that
\begin{equation}\label{eq:deriv-ly}
\partial_x\frac{\phi\circ F}{F_0'}=\left[\partial_x\phi)\right]\circ F+\frac{\partial_y\phi\circ F\cdot\partial_x G}{F_0'}+\phi\circ F\cdot\partial_x (F_0')^{-1}.
\end{equation}
First of all, notice that there exists $C>0$ such that $C^{-1}z^\vf\phi\circ F\partial_x (F_0')^{-1}\in\cD_{1+q}$. Next, let 
$\theta_z(x,y)=\frac{z^{\vf_0(x)}\partial_y\phi\circ F(x,0)\cdot\partial_x G(x,0)}{F_0'(x)}$. Notice that $\|\theta_z\|_{\cC^{1+q}}\leq |z|$ and
\[
|z|^{-1}\lambda^q\left[\frac{z^{\vf}\partial_y\phi\circ F\cdot\partial_x G}{F_0'}-\theta_z\right]\in \cD_q.
\]
Putting the above together,\footnote{ Note that $\vf$ is constant on each $Y_j$ and hence can be moved inside the derivative.}
\[
\left|\int_Y R(z) h\cdot \partial_x\phi\right|\leq\left|\sum_j\int_{\overline Y_j}h \partial_x\left[z^\vf\frac{\phi\circ F}{F_0'}\right]\right|+\lambda^{-q}|z|\|h\|_0+C|z|\|h\|_{\cB_w}.
\]
Since $\Psi:=z^\vf\frac{\phi\circ F}{F_0'}$ is discontinuous it does not belong to $\cD^0_{1+q}$.
To take care of such a problem we introduce appropriate counter terms. 

Remark that each $Y_j$ is the union of, at most finitely many, connect sets of the form $Y_{j,m}=[a_{j,m},b_{j,m}]\times [0,1]$ for some $a_{j,m},b_{j,m}\in [0,1]$. On each $Y_{j,m}$ we define the function
\[
\ell_{j,m}(x,y)=\Psi(a_{j,m},y)+\frac{\Psi(b_{j,m},y)-\Psi(a_{j,m},y)}{b_{j,m}-a_{j,m}}(x-a_{j,m}),
\]
and $\ell=\sum_{j,m}\Id_{Y_{j,m}}\ell_{j,m}$. Note that $I_p^1=\{x\in I_p\;:\; \vf_0(x)>1\}$ consist of one single interval. Thus $f_0(I_p^1)\subset [0,\frac 12]$ is the union of intervals whose image will eventually cover all $(\frac 12,1]$ apart, at most, for two intervals at the boundary  of $f_0(I_p^1)$. By the usual distortion estimates this implies that there exists a constant $C>0$ such that, for all but finitely many of the above mentioned intervals $[a_{j,m},b_{j,m}]$, have  
\[
|b_{j,m}-a_{j,m}|\geq C|(F_0^j)'(a_{j,m})|^{-1}.
\]
But then the same estimates, possibly with a smaller $C$, for all intervals.

The above considerations together with condition \eqref{eq:extra-cond} imply that $\frac{\lambda}{2|z|}(\Psi-\ell)\in\cD^0_{1+q}$ while $C^{-1}\partial_x\ell\in\cD_{1+q}$. We can then conclude
\[
\left|\int_Y R(z) h\cdot \partial_x\phi\right|\leq \max\{2\lambda^{-1},\lambda^{-q}\}|z|\|h\|_{\cB}+C|z|\|h\|_{\cB_w}.
\]
\end{proof}

\subsection{ Checking Hypoteses (H1)-(H5)}
In this section we check the hypotheses needed to apply the abstract theory. As many arguments are similar to the ones in Section \ref{sec-setup} we will go over them very quickly.

By Lemma \ref{lem:zero-norm} $C^1\subset BV\subset \cB\subset \cB_w$. Moreover, if $h\in C^1$
\[
\left|\int h\phi\right|\leq \|h\|_{\cB_w}\|\phi\|_{C^{1+q}}.
\]
Hence,   $\cB_w\subset (C^{1+q})'$ and (H1)(i) is satisfied with $\alpha=1$ and $\gamma=1+q$. 

Next, let us discuss (H2). In fact, for later convenience, we will prove a slightly stronger result. Note that the connect components of $\vf^{-1}(Y)$ have the form $E=(a,b)\times [0,1]$ for some $(a,b)\subset (1/2,1]$.  Hence, for all $\phi\in\cD^0_{1+q}$,
\begin{equation}\label{eq:h2-nm}
\begin{split}
\int_E h\phi &=\int_Yh\partial_x \int_0^x\phi \Id_E\leq \|h\|_{\cB} \left\|\int_0^1 \phi(t,\cdot)\Id_E (t,\cdot)dt\right\|_{C^{1+q}}\\
&\leq\|h\|_{\cB}\,|b-a|=\|h\|_{\cB}\, m(E).
\end{split}
\end{equation}
To conclude we use the relation between $\mu$ and $m$ which is the same as in the Markov case.

Hypothesis (H3) does not depend on the Banach space; it is rather an assumption on the map, and is proven as in Section \ref{sec-setup}.

Next, we look at the hypotheses involving the transfer operator. Note that, for $h\in BV$, $R(z)h$ might fail to be in $BV$ due to possible unbounded oscillations in the vertical direction. Thus, even though the $\cB$ norm of $R(z)h$ is bounded by Lemma \ref{lem:LY-disc}, the function $R(z)h$ might fail to belong to $\cB$, since the latter is defined as the objects that are approximated by $BV$ function. Note however that $R_nh\in BV$ for each $n\in\bN$.\footnote{ This follows since $\Id_{Y_n}$ is a multiplier in $BV$ and, for each smooth function $T:Y_n\to Y$, $h\in \BV$ implies $h\circ T\in BV$.}  Since
\[
R(z)h=\sum_n z^n R_nh
\]
it follows that (H4)(i) implies $R(z)(BV)\subset \cB$, and hence (H1)(ii). 

We proceed thus to prove the, stronger, (H4)(iii).
Let $\phi\in\cD_q$, then
\[
\int R_nh \phi=\int h(\phi\circ F\Id_{Y_{n}}-\theta)+\int h\theta,
\]
where $\theta(x,y)=\phi\circ F(x,0)\Id_{Y_{n}}(x,0)$. Note that, for almost all $x$, by \eqref{eq:extra-cond1},
\[
\|\phi\circ F(x,\cdot)\Id_{Y_{n}}(x,\cdot)-\theta(x,\cdot)\|_{C^q}\leq \|\Id_{Y_n}\partial_y G\|_\infty^q\leq C m(Y_n),
\]
where we have used the limitation on the possible values of $q$.\footnote{ In fact, here is the only place were such a condition is used.}
Thus, since $Y_{n}=[a_{n},b_{n}]\times [0,1]$, arguing as in \eqref{eq:h2-nm},
\[
\left|\int R_nh \phi\right|\leq \|h\|_0Cm(Y_n)+\int h\partial_x\int_{0}^x\theta\leq C\|h\|_{\cB}m(Y_{n}).
\]
This takes care of $\|R_n h\|_0$. Next, let $\phi\in\cD^0_{1+q}$. Arguing like in the proof of Lemma \ref{lem:LY-disc} we obtain
\[
\begin{split}
\left|\int R_nh \phi\right|&= \left|\sum_m\int_{Y_{n,m}} h(\partial_x\phi)\circ F\right|\\
&=\left|\sum_m\int_{Y_{n,m}} h\left[\partial_x\left(\frac{\phi\circ F}{F_0'}\right)-\frac{\partial_y\phi\circ F\cdot\partial_x G}{F_0'}-\phi\circ F\partial_x( F_0')^{-1}\right]\right|.
\end{split}
\]
Then, again as in Lemma \ref{lem:LY-disc}, we introduce $\theta_m$ linear in $x$, so that the functions $\Id_{Y_{n,m}}\left[\frac{\phi\circ F}{F_0'}-\theta_m\right]$ are continuous. Remembering \eqref{eq:extra-cond1} and \eqref{eq:extra-cond} we readily obtain
\[
\left|\int R_nh \cdot \partial_x\phi\right|\leq Cm(Y_n)\|h\|_{0}+ Cm(Y_n)\|h\|_1
\]
from which the hypothesis follows.

The proof of (H1)(iii) and  (H5)(ii) goes more or less as in the Markov case (with trivial changes due to the different norm) once one remembers the topological mixing assumption of $f$. Lemma \ref{lem:LY-disc} proves (H5)(i). 

\paragraph{Acknowledgements.}
The research of both authors was partially supported by  MALADY Grant, ERC AdG 246953. We wish to thank an anonymous referee for very useful comments and suggestions.


\begin{thebibliography}{10}

\bibitem{Aaronson}
J.~Aaronson. \emph{{An Introduction to Infinite Ergodic Theory}}. Math. Surveys
  and Monographs \textbf{50}, Amer. Math. Soc., 1997.

\bibitem{AaronsonDenker01}
J.~Aaronson and M.~Denker. { Local limit theorems for partial sums of stationary
  sequences generated by Gibbs-Markov maps}. {\em Stoch. Dyn.} \textbf{1}
  (2001) 193--237.

\bibitem{AZ}
J.~Aaronson and R.~Zweim{\"u}ller. {Limit theory for some positive stationary processes}.
To appear in {\em Annales de l' Institute Henri Poicar{\'e} }.


\bibitem{BaladiGouezel10}
V.~Baladi and S.~Gou{\"e}zel. {Banach spaces for piecewise cone hyperbolic diffeomorphisms}.
  {\em J. Modern  Dynam.} \textbf{4} (2010) 91--137.

\bibitem{BaL}
V.~Baladi and C.~Liverani.
\newblock Exponential decay of correlations for piecewise cone hyperbolic
  contact flows.
\newblock {\em Communication in Mathematical Physics},  {\bf 314} (2012), 689--773.
  
\bibitem{BT07}
V. Baladi and M Tsujii.
\newblock Anisotropic {H}\"older and {S}obolev spaces for hyperbolic
  diffeomorphisms.
\newblock {\em Ann. Inst. Fourier (Grenoble)}, {\bf 57}(1):127--154, 2007.

\bibitem{BT2} V. Baladi and M. Tsujii.
{ Dynamical determinants and spectrum for hyperbolic diffeomorphisms,}
in: Probabilistic and Geometric Structures in Dynamics, K. Burns, D. Dolgopyat, 
\& Ya. Pesin (eds), {\em Contemp. Math.}, Amer. Math. Soc., {\bf 469} 
(2008) 29--68.

\bibitem{BGT} N.\ Bingham, C.\ Goldie, J.\ Teugels.
\emph{Regular variation},
 Encyclopedia of Mathematics and its Applications, {\bf 27}
Cambridge University Press, (1987).

\bibitem{BlankKellerLiverani01}
M.~Blank, G.~Keller, C.~Liverani. {Ruelle Perron Frobenius spectrum for Anosov maps}.
\emph{Nonlinearity} \textbf{15} (2001)
  1905--1973.

\bibitem{Bow}R.~ Bowen. {\em Equilibrium states and the ergodic theory of Anosov diffeomorphisms}. Second revised edition. With a preface by David Ruelle.
 Edited by Jean-Renau Chazottes. Lecture Notes in Mathematics, {\bf 470}. Springer-Verlag, Berlin, 2008.

\bibitem{BuL} O. Butterley and C. Liverani.
{Smooth Anosov flows: correlation spectra and stability,}
{\em J. Modern Dynamics} {\bf 1} (2007) 301--322.  Robustly invariant sets in fibre contracting bundle flows, \newblock {\tt ArXiv e-prints, arXiv:1210.3791v2}.

\bibitem{DemersLiverani08}
M.~Demers and C.~Liverani. {Stability of statistical properties in two dimensional piecewise hyperbolic maps}.
\emph{Trans. Americ. Math. Soc.} \textbf{360} (2008)
  4777--4814.

\bibitem{DemersZhang11}
M.~Demers and H.K.~Zhang. {Spectral analysis of the transfer operator for the Lorentz gas}.
{\em J. Mod. Dyn.} {\bf 5} (2011), no. 4, 665--709.



\bibitem{Erickson}
K.~B. Erickson. Strong renewal theorems with infinite mean.\emph{Trans. Amer. Math. Soc.}
\textbf{151} (1970) 263--291.

\bibitem{EG} L.C. Evans, R.F. Gariepy. {\em Measure theory and fine properties of functions}. Studies in Advanced Mathematics. CRC Press, Boca Raton, FL, 1992. Studies in Advanced Mathematics. CRC Press, Boca Raton, FL, 1992. viii+268 pp.

\bibitem{TsujiiFaure}
F.~Faure and M.~Tsujii.
\newblock Prequantum transfer operator for {A}nosov diffeomorphism. 2013
\newblock {\tt ArXiv e-prints, arXiv:1206.0282v2}.

\bibitem{Feller} W.\ Feller.
 \emph{An Introduction to Probability theory and its Applications, volume 2},
John Wiley and Sons, New York (1966).

\bibitem{GL}
A.~Garsia and J.~Lamperti. A discrete renewal theorem with infinite mean.
 \emph{Comment. Math. Helv.} \textbf{37} (1962/1963) 221--234.

\bibitem{GLP} P.Giulietti, C.Liverani and M. Pollicott. {Anosov Flows and Dynamical Zeta Functions}. {\em Annals of Mathematics}, {\bf 178}, 2 (2013) 687--773.

\bibitem{Gouezel04}
S.~Gou{\"e}zel. {Sharp polynomial estimates for the decay of correlations}.
  \emph{Israel J. Math.} \textbf{139} (2004) 29--65.


\bibitem{Gouezel10}
S.~Gou{\"e}zel. 
Characterization of weak convergence of Birkhoff sums for Gibbs-Markov maps.
\emph{Israel J. Math.} \textbf{180} (2010) 1--41. 

\bibitem{Gouezel11}
S.~Gou{\"e}zel. 
Correlation asymptotics from large deviations in dynamical systems with infinite measure.
 \emph{Colloquium Math.}\textbf{125}, (2011) 193--212

\bibitem{GL06}
S{\'{e}}bastien Gou{\"{e}}zel and Carlangelo Liverani.
\newblock {Banach spaces adapted to {A}nosov systems}.
\newblock {\em Ergodic Theory and Dynamical Systems}, {\bf 26}(1):189--217, 2006.

\bibitem{GL08}
S{\'e}bastien Gou{\"e}zel and Carlangelo Liverani.
\newblock Compact locally maximal hyperbolic sets for smooth maps: fine
  statistical properties.
\newblock {\em J. Differential Geom.}, {\bf 79}(3):433--477, 2008.

\bibitem{hen} H. Hennion,  {\it Sur un th\'eor\`eme spectral et son application aux noyaux
lipchitziens,}  Proc. Amer. Math. Soc. {\bf 118} (1993) 627--634.

\bibitem{Kakutani} S.\ Kakutani.
 Induced measure preserving transformations.
 \emph{Proced. Imp. Acad. Sci. Tokyo} \textbf{19} (1943) 635--641.


\bibitem{Keller85}
G.~Keller. Generalized bounded variation and applications to piecewise monotonic transformations.
\emph{Z. Wahr. verw. Geb.} \textbf{69} (1985)
  461--478.




\bibitem{KellerLiverani99}
G.~Keller, C.~Liverani. Stability of the spectrum for transfer operators.
\emph{Annali della Scuola Normale Superiore di Pisa, Classe di Scienze} \textbf{19} (1999)
  141--152.

\bibitem{Hu04}
H.~Hu. Decay of correlations for piecewise smooth maps with indifferent fixed
  points. \emph{Ergodic Theory Dynam. Systems} \textbf{24} (2004) 495--524.


\bibitem{HuVa09}H.~Hu, S.~Vaienti. Absoltutely continuous invariant measures for
non-uniformly hyperbolic maps. \emph{Ergodic Theory Dynam. Systems}
  \textbf{29} (2009) 1185--1215.


\bibitem{LiveraniSaussolVaienti99}
C.~Liverani, B.~Saussol and S.~Vaienti. {A probabilistic approach to
  intermittency}. \emph{Ergodic Theory Dynam. Systems} \textbf{19} (1999)
  671--685.
\bibitem{liverani11}
C.~Liverani. {Multidimensional expanding maps with singularities: a pedestrian approach}. \emph{Ergodic Theory and Dynamical Systems}. \textbf{33}, 1 (2013) 168--182.

\bibitem{Li} C. Liverani. 
{\it On contact Anosov flows,}  
Ann. of Math.  {\bf 159}  (2004) 1275--1312. 

\bibitem{Maume01a}
V.~Maume-Deschamps. {Projective metrics and mixing properties on towers}.
\emph{Trans. Amer. Math. Soc.} \textbf{353} (2001) 3371--3389.



\bibitem{Melbourne_inv}
I.~Melbourne. \emph{Mixing rates for invertible infinite measure preserving systems}.
\emph{Stoch. and dyn.} Appeared online  2014.


\bibitem{MT} I.\ Melbourne, D.\ Terhesiu. {Operator renewal theory and mixing rates for dynamical
systems with infinite measure},  \emph{Invent. Math.}
\textbf{1} (2012) 61--110.

\bibitem{MT11} I.\ Melbourne, D.\ Terhesiu. { First and higher order uniform ergodic theorems for dynamical
systems with infinite measure}, \emph{Israel J. Math.}\textbf{194} (2013) 793--830.

\bibitem{MT12} I.\ Melbourne, D.\ Terhesiu. {Decay of correlation for nonuniformly hyperbolic systems with general return times},
 \emph{ Ergodic th. and dyn. syst.} Appeared online January 2013. 


\bibitem{PomeauManneville80}
Y.~Pomeau and P.~Manneville. Intermittent transition to turbulence in
  dissipative dynamical systems. \emph{Comm. Math. Phys.} \textbf{74} (1980)
  189--197.


\bibitem{Sarig02}
O.~M. Sarig. {Subexponential decay of correlations}. \emph{Invent. Math.}
\textbf{150} (2002) 629--653.

\bibitem{Saussol}
B. Saussol. {Absolutely continuous invariant measures for multidimensional expanding maps}.  \emph{Isarel J. Math.}
\textbf{116} (2000) 223--248.



\bibitem{Terhesiu12}
D.~Terhesiu. { Improved mixing rates for infinite measure preserving transformations},
 \emph{ Ergodic th. and dyn. syst.} Appeared online  2013.

\bibitem{Young98}
L.-S. Young. Statistical properties of dynamical systems with some
  hyperbolicity. \emph{Ann. of Math.} \textbf{147} (1998) 585--650.

\bibitem{Young99}
L.-S. Young. Recurrence times and rates of mixing. \emph{Israel J. Math.}
  \textbf{110} (1999) 153--188.


\bibitem{Zweimuller98}
R.~Zweim{\"u}ller. Ergodic structure and invariant densities of non-{M}arkovian
  interval maps with indifferent fixed points. \emph{Nonlinearity} \textbf{11}
  (1998) 1263--1276.

\bibitem{Zweimuller00}
R.~Zweim{\"u}ller. Ergodic properties of infinite measure-preserving interval
  maps with indifferent fixed points. \emph{Ergodic Theory Dynam. Systems}
  \textbf{20} (2000) 1519--1549.

\end{thebibliography}
\end{document}